\documentclass[reqno]{amsart}
\usepackage{hyperref}
%------------------------------------------------------------------------------------------------}
%\vspace{9mm}}
\newtheorem*{theoA}{Theorem A}
\newtheorem*{theoB}{Theorem B}
\newtheorem*{theoC}{Theorem C}
\newtheorem*{theoD}{Theorem D}
\newtheorem*{theoE}{Theorem E}
\newtheorem*{theoF}{Theorem F}
\newtheorem*{theoG}{Theorem G}
\newtheorem*{theoH}{Theorem H}
\newtheorem*{theoI}{Theorem I}
\newtheorem*{theoJ}{Theorem J}
\newtheorem{theo}{Theorem}[section]
\newtheorem{lem}{Lemma}[section]

\newtheorem{ques}{Question}[section]

\newtheorem{defi}{Definition}[section]
\newtheorem{rem}{Remark}[section]
\newcommand{\ol}{\overline}
\newcommand{\be}{\begin{equation}}
\newcommand{\ee}{\end{equation}}
\newcommand{\beas}{\begin{eqnarray*}}
	\newcommand{\eeas}{\end{eqnarray*}}
\newcommand{\bea}{\begin{eqnarray}}
\newcommand{\eea}{\end{eqnarray}}

\numberwithin{equation}{section}

\begin{document}
\title[\hfilneg \hfil Some further q-shift difference results on Hayman conjecture]
{Some further q-shift difference results on Hayman conjecture}
\author[G. Haldar\hfil
\hfilneg]
{Goutam Haldar}

%\address{A. Banerjee \newline
%Department of Mathematics, University of Kalyani, West Bengal 741235, India.}
%\email{abanerjee\_kal@yahoo.co.in, abanerjeekal@gmail.com}

\address{G. Haldar  \newline
Department of Mathematics, Malda College, West Bengal 732101, India.}
\email{goutamiit1986@gmail.com, goutamiitm@gmail.com}

%\thanks{Submitted   Aug.    16,  2017.}
\subjclass[2010]{30D35.}
\keywords{ Meromorphic function, q-shift operator, zero order, small function, value sharing.}

\maketitle
%\noindent Some further $q$-shift difference results on Hayman conjecture and value-sharing
%\setcounter{page}{288}

\begin{abstract}
   In this paper, we investigate the zero distributions of $q$-shift difference-differential polynomials of meromorphic functions with zero-order that extends and generalizes the classical Hayman results of the zeros of differential polynomials to q-shift difference. We also investigate the uniqueness problem of $q$-shift difference-differential polynomials sharing a polynomial value with finite weight. 
   \end{abstract}
\section{\textbf{Introduction}}
A meromorphic function $f(z)$ in the complex plane $\mathbb{C}$ means is a function that is analytic in $\mathbb{C}$ except for the set of isolated points, which are poles of the function. If no poles
occur, then $f(z)$ is called an entire function. Let $f$ and $g$ be two non-constant meromorphic functions defined in the open complex plane $\mathbb{C}$. If for some $a\in\mathbb{C}\cup\{\infty\}$, the zero of  $f-a$ and $g-a$ have the same locations as well as multiplicities, we say that $f$ and $g$ share the value $a$ CM (counting multiplicities). If we do not consider the multiplicities, then $f$ and $g$ are said $a$ IM (ignoring multiplicities). We adopt the standard notations of the Nevanlinna theory of meromorphic functions explained in (\cite{Hayman & 1964}).\par A meromorphic function $\alpha(z)$ is said to be small with respect to $f$ if $T(r,\alpha)=S(r,f)$. i.e., $T(r,\alpha)=o(T(r,f))$ as $r\longrightarrow \infty$, outside of a possible exceptional set of finite linear measure.\par
For a set $S\subset\mathbb{C}$, we define
\begin{equation*} E_{f}(S)=\bigcup_{a\in S}\{z|f(z)=a(z)\},\end{equation*} where each zero is counted according to its multiplicity and $$\overline E_{f}(S)=\bigcup_{a\in S}\{z|f(z)=a(z)
\},\;\text{where each zero is counted only once}.$$\par
If $E_{f}(S)=E_{g}(S)$, we say that $f$, $g$ share the set S CM and if $\overline E_{f}(S)=\overline E_{g}(S)$, we say $f$, $g$ share the set S IM.\par
In 2001, Lahiri (\cite{Lahiri & Nagoya & 2001}) introduced a gradation of sharing of values or sets which is known as weighted sharing. Below we are recalling the notion.
\begin{defi}(\cite{Lahiri & Nagoya & 2001})
	Let $k$ be a non-negative integer or infinity. For $a\in \mathbb{C}\cup\{\infty\}$ we denote by $E_{k}(a,f)$ the set of all $a$-points of $f$, where an $a$ point of multiplicity $m$ is counted $m$ times if $m\leq k$ and $k+1$ times if $m>k.$ If $E_{k}(a,f)=E_{k}(a,g),$ we say that $f$, $g$ share the value $a$ with weight $k$. \end{defi}
We write $f$, $g$ share $(a,k)$ to mean that $f,$ $g$ share the value $a$ with weight $k.$ Clearly if $f,$ $g$ share $(a,k)$ then $f,$ $g$ share $(a,p)$ for any integer $p$, $0\leq p<k.$ Also we note that $f,$ $g$ share a value $a$ IM or CM if and only if $f,$ $g$ share $(a,0)$ or $(a,\infty)$ respectively.\par
\begin{defi} \cite {Alzahare & Yi & 2004} Let $f$ and $g$ be two non-constant meromorphic functions such that $f$ and $g$ share the value $a$ IM. Let $z_{0}$ be a $a$-point of $f$ with multiplicity $p$, a $a$-point of $g$ with multiplicity $q$. We denote by $\ol N_{L}(r,a;f)$ the counting function of those $a$-points of $f$ and $g$ where $p>q$, by $N^{1)}_{E}(r,a;f)$ the counting function of those $a$-points of $f$ and $g$ where $p=q=1$ and by $\ol N^{(2}_{E}(r,a;f)$ the counting function of those $a$-points of $f$ and $g$ where $p=q\geq 2$, each point in these counting functions is counted only once. Similarly, one can define $\ol N_{L}(r,a;g),\; N^{1)}_{E}(r,a;g),\; \ol N^{(2}_{E}(r,a;g).$
\end{defi}
\begin{defi}\cite{Lahiri & Nagoya & 2001,Lahiri & Complex Var & 2001} Let $f$, $g$ share a value $a$ IM. We denote by $\ol N_{*}(r,a;f,g)$ the reduced counting function of those $a$-points of $f$ whose multiplicities differ from the multiplicities of the corresponding $a$-points of $g$.
	Clearly $\ol N_{*}(r,a;f,g)\equiv\ol N_{*}(r,a;g,f)$ and $\ol N_{*}(r,a;f,g)=\ol N_{L}(r,a;f)+\ol N_{L}(r,a;g)$.
\end{defi}
\begin{defi}\cite{Lahiri & Sarkar & 2004}Let $p$ be a positive integer and $a\in\mathbb{C}\cup\{\infty\}$.\begin{enumerate}
		\item[(i)] $N(r,a;f\mid \geq p)$ ($\ol N(r,a;f\mid \geq p)$)denotes the counting function (reduced counting function) of those $a$-points of $f$ whose multiplicities are not less than $p$.\item[(ii)]$N(r,a;f\mid \leq p)$ ($\ol N(r,a;f\mid \leq p)$)denotes the counting function (reduced counting function) of those $a$-points of $f$ whose multiplicities are not greater than $p$.
	\end{enumerate}
\end{defi}
In recent times, many mathematicians are working on difference equations, the difference product and the q-difference analogues the value distribution theory of entire and meromorphic functions in the complex plane (see \cite{Chiang & Feng & 2008}, \cite{Halburd & Korhonen & 2006}, \cite{Halburd & Korhonen & 2006 & Fenn}, \cite{Heittokangas et al & 2011}, \cite{Laine & Yang & 2007}, \cite{Liu & 2009}, \cite{Liu & Yang & 2009}, \cite{Zhang & 2005}). In 2006, Halburd and Korhonen (\cite{Halburd & Korhonen & 2006}) established a difference analogue of the Logarithmic Derivative Lemma, and then applying it, a lot of results on meromorphic solutions of complex difference equations has been proved. After that Barnett, Halburd, Korhonen and Morgan (\cite{Barnett & Halburd & 2007}) also established a q-difference analogue of the Logarithmic Derivative Lemma. \par 
Let us first recall the notion of the q-shift and q-difference operator of a meromorphic function $f$.
\begin{defi}
	For a meromorphic function f and $c,\; q(\neq 0)\in\mathbb{C}$, let us now denote its q-shift $E_{q}f$ and q-difference operators $\Delta_{q}f$ respectively by $E_{q}f(z) = f(qz+c)$ and $\Delta_{q}f(z)=f(qz+c)-f(z)$.\end{defi}
For further generalization of $\Delta_{q}f(z)$, we now define the $q$-difference operator of an entire (meromorphic) function $f$ as as $L(z,f)=b_1f(qz+c)+b_0f(z)$, where $b_1(\neq0)$ and $b_0$ are complex constants.
For $s\in \mathbb{N}$, let us define \beas \chi_{_{b_{0}}}=\begin{cases} 1, \;\text{if}\; b_0\neq0\\ 0, \; \text{if}\; b_0=0.\end{cases}\eeas
\par Let $P(z)=a_mz^m+a_{m-1}z^{m-1}+\ldots+a_0$ be a nonzero polynomial of degree $n$, where $a_{m}(\neq 0), a_{m-1}, \ldots, a_0$ are complex constants and $m$ is a positive integer. Let $m_1$ be the number of distinct simple zeros and $m_2$ be the number of distinct multiple zeros of $P(z)$. Let $\Gamma_{0}=m_1+2m_2$ and $\Gamma_{1}=m_1+m_2$.
\par %Liu, Liu and Cao (\cite{Liu & Liu & Cao & 2011}), Chen, Huang and Zheng (\cite{Chen & Huang & Zheng & 2011}), and Luo and Lin (\cite{Luo & Lin &  2011}) studied zeros distributions of  shift of meromorphic functions and obtained some remarkable results.\par 
In $2010$, Zhang and Korhonen  (\cite{Zhang & Korhonen & 2010}) studied the value distribution of $q$-difference polynomials of meromorphic functions and obtained the following result.
\begin{theoA}\cite{Zhang & Korhonen & 2010}
	Let $f$ be a transcendental meromorphic (resp. entire) function of zero order and q non-zero complex constant. Then for $n\geq6$	(resp. $n \geq2$), $f(z)^nf(qz)$ assumes every non-zero value $a\in\mathbb{C}$ infinitely often.\end{theoA}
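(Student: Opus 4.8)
The plan is to argue by contradiction. Set $F(z)=f(z)^nf(qz)$ and suppose the conclusion fails, so that the $a$-points of $F$ form a finite set and $\overline N(r,a;F)=S(r,f)$. Throughout I would use the zero-order $q$-shift relations that follow from the $q$-difference analogue of the lemma on the logarithmic derivative of Barnett, Halburd, Korhonen and Morgan (\cite{Barnett & Halburd & 2007}); concretely, for a meromorphic $f$ of order zero and $q\neq0$ one has $T(r,f(qz))=T(r,f)+S(r,f)$ together with the matching relations $N(r,\infty;f(qz))=N(r,\infty;f)+S(r,f)$ and $N(r,0;f(qz))=N(r,0;f)+S(r,f)$ (and likewise for the reduced counting functions) and $m(r,f(qz)/f)=S(r,f)$. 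These let me trade every Nevanlinna quantity attached to the shift $f(qz)$ for the corresponding quantity of $f$, at the cost of an $S(r,f)$ term, and they also guarantee $S(r,F)=S(r,f)$ since $T(r,F)=O(T(r,f))$. I would isolate them as a preliminary lemma before starting the estimate.

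First I would derive a lower bound for $T(r,F)$. Writing $f^n=F/f(qz)$ and applying the first fundamental theorem gives $nT(r,f)=T(r,f^n)\le T(r,F)+T(r,f(qz))+O(1)$, and substituting $T(r,f(qz))=T(r,f)+S(r,f)$ yields
\[ T(r,F)\ge (n-1)T(r,f)+S(r,f). \]
For the entire case I would keep the sharper proximity-function version: since $f$ is entire, $nT(r,f)=m(r,f^n)\le m(r,F)+m(r,1/f(qz))$, and because $m(r,1/f(qz))=T(r,f(qz))-N(r,0;f(qz))+O(1)=T(r,f)-N(r,0;f)+S(r,f)$ this gives
\[ T(r,F)=m(r,F)\ge (n-1)T(r,f)+N(r,0;f)+S(r,f). \]

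For the meromorphic case I would then apply the second fundamental theorem to $F$ with the three targets $0$, $a$ and $\infty$,
\[ T(r,F)\le \overline N(r,0;F)+\overline N(r,a;F)+\overline N(r,\infty;F)+S(r,f), \]
and split the zero- and pole-counting functions across the two factors $f^n$ and $f(qz)$. This gives $\overline N(r,0;F)\le \overline N(r,0;f)+\overline N(r,0;f(qz))$ and $\overline N(r,\infty;F)\le \overline N(r,\infty;f)+\overline N(r,\infty;f(qz))$; reducing the shifted terms by the preliminary lemma and using $\overline N(r,0;f),\overline N(r,\infty;f)\le T(r,f)+O(1)$ produces $T(r,F)\le 4T(r,f)+\overline N(r,a;F)+S(r,f)$. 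Combining this with the lower bound gives $(n-5)T(r,f)\le \overline N(r,a;F)+S(r,f)$, so the assumption $\overline N(r,a;F)=S(r,f)$ forces $(n-5)T(r,f)\le S(r,f)$, which is impossible for a transcendental $f$ once $n\ge 6$; hence $F-a$ has infinitely many zeros.

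For the entire case the pole terms drop out and the second fundamental theorem reads $T(r,F)\le \overline N(r,0;F)+S(r,f)$ under the assumption. The point is to exploit the high multiplicity of the zeros of $F$ coming from $f^n$: since $N(r,0;F)\le T(r,1/F)+O(1)=T(r,F)+O(1)$, the last inequality yields $N(r,0;F)-\overline N(r,0;F)\le S(r,f)$, while a zero of $f$ of multiplicity $m$ is a zero of $F$ of multiplicity $nm$ and therefore $N(r,0;F)-\overline N(r,0;F)\ge (n-1)N(r,0;f)+S(r,f)$. For $n\ge 2$ these two inequalities force $N(r,0;f)=S(r,f)$, whence $\overline N(r,0;F)\le N(r,0;f)+N(r,0;f(qz))=S(r,f)$ and so $T(r,F)=S(r,f)$; comparing with $T(r,F)\ge (n-1)T(r,f)+S(r,f)$ gives $(n-1)T(r,f)\le S(r,f)$, a contradiction for $n\ge 2$. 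The main obstacle is exactly this entire-case sharpening: the crude reduced-counting estimate only delivers $n\ge 4$, and reaching the stated $n\ge 2$ requires the multiplicity bookkeeping above, so the delicate point is to verify that the zeros contributed by $f^n$ and those contributed by $f(qz)$ are counted without unwanted cancellation against one another, and that all the zero-order shift relations hold uniformly outside a set of finite logarithmic measure, so that the accumulated $S(r,f)$ terms remain genuinely negligible against $T(r,f)$.
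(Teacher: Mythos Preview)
Your argument is correct. Note first that Theorem~A is quoted from \cite{Zhang & Korhonen & 2010}; the paper does not prove it directly but recovers it as the special case $P(z)=z^{n}$, $L(z,f)=f(qz)$, $s=1$, $k=0$ of Theorem~\ref{t1}. Comparing your proof to the proof of Theorem~\ref{t1} so specialized:

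In the meromorphic case the two proofs are essentially identical: both combine the elementary lower bound $(n-1)T(r,f)\le T(r,F)+S(r,f)$ with the second fundamental theorem for $0,a,\infty$ and the estimates $\overline N(r,0;F),\,\overline N(r,\infty;F)\le 2T(r,f)+S(r,f)$ to reach $(n-5)T(r,f)\le S(r,f)$.

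In the entire case the approaches genuinely diverge. The paper's route rests on Lemma~\ref{lem3.10a}, whose specialization reads $nT(r,f)\le T(r,F)-N(r,0;f(qz))+S(r,f)$; this is strictly sharper than your proximity-function lower bound by a full $T(r,f)$, and the extra $-N(r,0;f(qz))$ cancels exactly against the contribution of the shift factor when one bounds $\overline N(r,0;F)$, yielding $(n-1)T(r,f)\le S(r,f)$ in one stroke. You do not have that cancellation, so you instead extract it in two steps: from $T(r,F)\le\overline N(r,0;F)+S(r,f)$ and $N(r,0;F)\le T(r,F)+O(1)$ you deduce $N(r,0;F)-\overline N(r,0;F)\le S(r,f)$, and the multiplicity bookkeeping $N(r,0;F)-\overline N(r,0;F)\ge (n-1)N(r,0;f)$ then forces $N(r,0;f)=S(r,f)$, after which the conclusion follows. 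This detour is sound (zeros of $f^{n}$ and of $f(qz)$ only add, never cancel, since $f$ is entire), and it reaches the same threshold $n\ge 2$. The paper's method is cleaner and generalizes more readily to the derivatives and polynomials $P$ of Theorem~\ref{t1}; your method is more elementary and avoids proving the auxiliary Lemma~\ref{lem3.10a}. One small wording point: the $q$-shift relations you invoke hold on a set of logarithmic density~$1$ (cf.\ Lemma~\ref{lem2.2}), not outside a set of finite logarithmic measure, but this does not affect the argument.
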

Recently, Liu and Qi \cite{Liu & Qi & 2011} firstly investigated value distributions for a q-shift of the meromorphic function and obtained the following result.
\begin{theoB}\cite{Liu & Qi & 2011}
	Let $f$ be a zero-order transcendental meromorphic function, $n\geq6$; $q\in\mathbb{C}-\{0\}$, $\eta\in\mathbb{C}$ and $R(z)$ a rational function. Then $f(z)^nf(qz + \eta)-R(z)$ has infinitely many zeros.\end{theoB}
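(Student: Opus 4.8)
The plan is to argue by contradiction using the second main theorem with a small target, exactly as in the classical Hayman programme but with the ordinary shift replaced by the $q$-shift. Write $F(z)=f(z)^{n}f(qz+\eta)$ and $G(z)=f(qz+\eta)$, and (as in Theorem A, where the value is taken to be non-zero) assume $R\not\equiv 0$, so that $0$, $\infty$ and $R$ are three distinct small target functions. Suppose, contrary to the assertion, that $F-R$ has only finitely many zeros; then $\overline N\!\left(r,\tfrac{1}{F-R}\right)=O(\log r)=S(r,f)$. Since $f$ is transcendental of zero order and $R$ is rational, $R$ is small with respect to $f$, and the whole scheme hinges on first proving that $T(r,F)$ and $T(r,f)$ have the same growth, so that $S(r,F)=S(r,f)$ and $R$ is small with respect to $F$ as well.

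First I would record the zero-order $q$-shift characteristic estimate $T(r,G)=T(r,f(qz+\eta))=(1+o(1))\,T(r,f)$, which is the one genuinely new ingredient and follows from the $q$-difference analogue of the lemma on the logarithmic derivative together with the behaviour of the Nevanlinna functionals under $z\mapsto qz+\eta$ for functions of order zero. From the factorization $f^{n}=F/G$ one then gets $n\,T(r,f)=T(r,f^{n})\le T(r,F)+T(r,G)+O(1)$, whence the lower bound $T(r,F)\ge (n-1-o(1))\,T(r,f)$; the trivial upper bound $T(r,F)\le (n+1+o(1))\,T(r,f)$ confirms $T(r,F)\asymp T(r,f)$ and $S(r,F)=S(r,f)$.

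Next I would bound the reduced counting functions of the zeros and poles of $F$. Every zero (resp.\ pole) of $F$ is a zero (resp.\ pole) of $f$ or of $G$, so by the first main theorem and the estimate above,
$$\overline N(r,0;F)\le \overline N(r,0;f)+\overline N(r,0;G)\le (2+o(1))\,T(r,f),\qquad \overline N(r,\infty;F)\le (2+o(1))\,T(r,f).$$
Applying the second main theorem to $F$ with the three distinct small targets $0$, $\infty$, $R$ gives
$$T(r,F)\le \overline N(r,0;F)+\overline N(r,\infty;F)+\overline N\!\left(r,\tfrac{1}{F-R}\right)+S(r,F).$$
Under the contradiction hypothesis the $R$-term is $S(r,f)$, so the right-hand side is at most $(4+o(1))\,T(r,f)$, while the left-hand side is at least $(n-1-o(1))\,T(r,f)$. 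For $n\ge 6$ this forces $(n-1)T(r,f)\le (4+o(1))T(r,f)$ with $n-1\ge 5$, which is absurd; hence $F-R$ has infinitely many zeros.

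The main obstacle I anticipate is entirely concentrated in the single $q$-shift estimate $T(r,f(qz+\eta))=(1+o(1))\,T(r,f)$ (and hence $S(r,f(qz+\eta))=S(r,f)$): this is where the zero-order hypothesis and the $q$-difference logarithmic derivative lemma are indispensable, and everything else is elementary bookkeeping with the first and second main theorems. Once that estimate is secured, the counting inequalities and the final arithmetic $n-1>4$ produce the conclusion immediately, which also explains transparently why the threshold $n\ge 6$ appears in the meromorphic case.
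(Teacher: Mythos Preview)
Your argument is correct and matches the paper's approach. Theorem~B is quoted from \cite{Liu & Qi & 2011} rather than proved directly, but the paper's own Theorem~\ref{t1} contains it as the special case $P(z)=z^{n}$, $s=1$, $k=0$, $b_{0}=0$, and the proof there proceeds exactly as you do: establish $(n-1)T(r,f)\le T(r,F)+S(r,f)$ via the $q$-shift characteristic estimate (Lemma~\ref{lem2.2}), apply the second main theorem for small functions (\cite{Yamanoi & 2002}) to $F$ with targets $0,\infty,\alpha$, bound $\ol N(r,0;F)+\ol N(r,\infty;F)$ by $(4+o(1))T(r,f)$, and obtain the contradiction $n-1\le 4$. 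The only cosmetic difference is that the paper, working in the generality $k\ge 0$, invokes Lemma~\ref{lem3.11} to pass from $T(r,\mathcal F_1)$ to $T(r,\mathcal F_1^{(k)})$ and cancels $\ol N(r,0;\mathcal F)$ along the way, which in your $k=0$ situation collapses to the direct bound you wrote.
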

In 2011, Liu, Liu and Cao \cite{Liu & Liu & Cao & 2011} investigated about the zeros of $f^n(f^m-1)[f(qz+c)-f(z)]=a(z)$, where $a(z)$ is a small function of $f$ and obtained the following result.
\begin{theoC}\cite{Liu & Liu & Cao & 2011}
	Let $f$ be a transcendental meromorphic (resp. entire) function with zero-order. If $n\geq7$ (resp. $n\geq3$), then $f(z)^n( f(z)^m-a)[f(qz+c)-f(z)]-\alpha(z)$ has
	infinitely many zeros, where $\alpha(z)$ is a non-zero small function with respect to $f$.\end{theoC}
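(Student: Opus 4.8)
The plan is to argue by contradiction. Assume that
$F(z):=f(z)^n\big(f(z)^m-a\big)\big[f(qz+c)-f(z)\big]$ takes the small function $\alpha$ only finitely often, so that $\ol N\!\left(r,\alpha;F\right)=S(r,f)$, and then apply the second fundamental theorem to $F$ with the three targets $0,\infty,\alpha$ to force $n\le 6$ (resp. $n\le 2$ in the entire case). Throughout I would lean on three facts valid for a transcendental meromorphic $f$ of zero order: (i) the growth relation $T\!\left(r,f(qz+c)\right)=T(r,f)+S(r,f)$, which transfers any quantity evaluated at $qz+c$ back to one at $z$ up to $S(r,f)$; (ii) the $q$-shift difference analogue of the logarithmic derivative lemma of Barnett, Halburd, Korhonen and Morgan, giving $m\!\left(r,\tfrac{f(qz+c)}{f(z)}\right)=S(r,f)$; and (iii) the Valiron--Mohon'ko theorem, which yields $T\!\left(r,f^n(f^m-a)\right)=(n+m)\,T(r,f)+S(r,f)$. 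Since the bounds below show $T(r,F)\asymp T(r,f)$, the term $S(r,F)$ may be replaced by $S(r,f)$.

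First I would pin down the true size of $T(r,F)$. Writing $\Delta:=f(qz+c)-f(z)$ and factoring $F=f^{\,n+1}(f^m-a)\cdot\tfrac{\Delta}{f}$, I note $m\!\left(r,\tfrac{\Delta}{f}\right)\le m\!\left(r,\tfrac{f(qz+c)}{f}\right)+O(1)=S(r,f)$ by (ii), while the poles of $\Delta/f$ sit only over the zeros of $f$ and the poles of $f(qz+c)$, so that $T\!\left(r,\tfrac{\Delta}{f}\right)=N\!\left(r,\tfrac{\Delta}{f}\right)+S(r,f)\le 2\,T(r,f)+S(r,f)$ by (i). Since $f^{\,n+1}(f^m-a)$ has degree $n+m+1$ in $f$, combining $T\!\left(r,f^{\,n+1}(f^m-a)\right)\le T(r,F)+T\!\left(r,\tfrac{\Delta}{f}\right)+O(1)$ with the Valiron--Mohon'ko value $(n+m+1)T(r,f)$ delivers the crucial lower bound $T(r,F)\ge (n+m-1)\,T(r,f)+S(r,f)$. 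In the entire case the pole contribution to $N(r,\Delta/f)$ disappears, improving this to $T(r,F)\ge (n+m)\,T(r,f)+S(r,f)$.

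Next I would apply the second fundamental theorem, $T(r,F)\le \ol N(r,0;F)+\ol N(r,\infty;F)+\ol N(r,\alpha;F)+S(r,f)$, and estimate each term. The zeros of $F$ lie among the zeros of $f$, of $f^m-a$, and of $\Delta$; since $f^m-a=\prod_{j=1}^{m}(f-\zeta_j)$ with $\zeta_1,\dots,\zeta_m$ the distinct roots of $\zeta^m=a$, the first fundamental theorem gives $\ol N(r,0;f^m-a)=\sum_{j=1}^{m}\ol N(r,\zeta_j;f)\le m\,T(r,f)+S(r,f)$, while $\ol N(r,0;f)\le T(r,f)+S(r,f)$ and $\ol N(r,0;\Delta)\le T(r,\Delta)\le 2\,T(r,f)+S(r,f)$. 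The poles of $F$ arise only from the poles of $f$ and of $f(qz+c)$, so $\ol N(r,\infty;F)\le 2\,T(r,f)+S(r,f)$ by (i), and $\ol N(r,\alpha;F)=S(r,f)$ by hypothesis. Adding these gives $T(r,F)\le(m+5)\,T(r,f)+S(r,f)$. The decisive point is that the $m\,T(r,f)$ coming from $f^m-a$ cancels against the $m$ hidden in the lower bound: comparing with $T(r,F)\ge(n+m-1)T(r,f)+S(r,f)$ yields $n\le 6$, a contradiction once $n\ge 7$. In the entire case the terms $\ol N(r,\infty;F)$ vanish and $T(r,\Delta)\le T(r,f)+S(r,f)$ (no poles), so the upper bound sharpens to $(m+2)T(r,f)+S(r,f)$; comparing with the entire lower bound $(n+m)T(r,f)+S(r,f)$ forces $n\le 2$, contradicting $n\ge 3$.

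The main obstacle, and the place where the zero-order hypothesis is indispensable, is the sharp bookkeeping of the $q$-shift difference factor $\Delta=f(qz+c)-f(z)$: the proximity bound $m(r,\Delta/f)=S(r,f)$, the transfer of $N(r,\infty;f(qz+c))$ and $T(r,f(qz+c))$ back to $f$, and the resulting lower bound $(n+m-1)T(r,f)$ all rely on (i) and (ii), which hold precisely because $f$ has order zero. Getting the constants tight enough to reach the stated thresholds (rather than a weaker $n\ge 8$) requires using the factorization $F=f^{\,n+1}(f^m-a)(\Delta/f)$ to place the degree $n+m+1$ on the correct side, so that the genuinely large term $\sum_{j}\ol N(r,\zeta_j;f)$ from $f^m-a$ cancels cleanly and leaves a condition on $n$ alone. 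I expect the careful verification of $S(r,F)=S(r,f)$ and of the pole-counting for $\Delta$ to be the most technical, though routine, part.
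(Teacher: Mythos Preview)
Your argument is correct and follows essentially the same route that the paper uses when it proves its generalization (Theorem~\ref{t1}): a lower bound for $T(r,F)$ obtained from the factorization through $\Delta/f$ together with the $q$-shift logarithmic-derivative estimate (the paper packages this as Lemma~\ref{lem3.10a} in the entire case and the analogous computation in the meromorphic case), followed by the second main theorem for the targets $0,\infty,\alpha$ and the obvious bookkeeping of zeros and poles. The only cosmetic difference is that the paper, in the entire case, tracks $N(r,0;L(z,f))$ explicitly on both sides so that it cancels (yielding the bound $\Gamma_{1}+km_{2}$ in the general statement), whereas you bound $\ol N(r,0;\Delta)\le T(r,\Delta)\le T(r,f)+S(r,f)$ directly; for Theorem~C this produces the same threshold $n\ge 3$. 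You should also note, as the paper does, that the small-function target $\alpha$ requires the second main theorem for small functions (the paper cites Yamanoi~\cite{Yamanoi & 2002}); your inequality $T(r,F)\le \ol N(r,0;F)+\ol N(r,\infty;F)+\ol N(r,\alpha;F)+S(r,F)$ is exactly that statement.
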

Later on Xu, Liu and Cao \cite{Xu & Liu & Cao & 2015} started investigation about the zeros of $P(f)f(qz + \eta) = a(z)$and $P(f)[f(qz + \eta)-f(z)] = a(z)$ and obtained the following two results. \par 
\begin{theoD}\cite{Xu & Liu & Cao & 2015}
	Let $f$ be a zero-order transcendental meromorphic (resp. entire) function, $q(\neq 0), \eta$ are complex constants. Then for $n > m+4$ (resp. $n > m$), $P(f)f(qz+\eta)=a(z)$ has infinitely many solutions, where a(z) is a non-zero small functions in $f$.\end{theoD}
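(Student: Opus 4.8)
The plan is to argue by contradiction. Write $F(z)=P(f)f(qz+\eta)$ and suppose that $F-a$ has only finitely many zeros, so that $\overline{N}(r,a;F)=S(r,f)$. To handle the fact that $a$ is a small function rather than a constant, I would pass to $G=F/a$: since $T(r,a)=S(r,f)$ we have $T(r,G)=T(r,F)+S(r,f)$ and $S(r,G)=S(r,f)$, the $1$-points of $G$ are exactly the $a$-points of $F$, and the zeros/poles of $a$ contribute only $S(r,f)$. This lets me apply the \emph{ordinary} second main theorem to $G$ with the three fixed targets $0,1,\infty$, namely $T(r,G)\le\overline{N}(r,0;G)+\overline{N}(r,1;G)+\overline{N}(r,\infty;G)+S(r,G)$, with $\overline{N}(r,1;G)=\overline{N}(r,a;F)=S(r,f)$ by hypothesis.

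Next I would assemble the zero-order $q$-shift toolkit. Using the $q$-difference analogue of the lemma on the logarithmic derivative of Barnett–Halburd–Korhonen–Morgan, for a zero-order $f$ one has $m\!\left(r,f(qz+\eta)/f\right)=S(r,f)$, and consequently the characteristic and counting functions are shift-invariant up to $S(r,f)$: $T(r,f(qz+\eta))=T(r,f)+S(r,f)$, $\overline{N}(r,f(qz+\eta))=\overline{N}(r,f)+S(r,f)$ and $\overline{N}(r,0;f(qz+\eta))=\overline{N}(r,0;f)+S(r,f)$. These let me treat the shifted factor as a copy of $f$ at the level of $S(r,f)$. For the growth comparison I would use the Valiron–Mohon'ko theorem, $T(r,P(f))=n\,T(r,f)+S(r,f)$; then, since $P(f)=F/f(qz+\eta)$, the estimate $T(r,P(f))\le T(r,F)+T(r,f(qz+\eta))+O(1)$ yields the lower bound $T(r,F)\ge (n-1)\,T(r,f)+S(r,f)$. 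I emphasize that only the lossy product bound $(n-1)T(r,f)$, rather than the naive $(n+1)T(r,f)$, is available, and this single lost $T(r,f)$ is part of what sets the threshold.

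The heart of the argument is the estimation of the two surviving counting functions. For the poles, $\overline{N}(r,\infty;G)\le\overline{N}(r,\infty;F)+S(r,f)\le\overline{N}(r,f)+\overline{N}(r,f(qz+\eta))+S(r,f)\le 2\,T(r,f)+S(r,f)$. For the zeros, factoring $P$ over its distinct roots $c_1,\dots,c_m$ and using the first main theorem gives $\overline{N}(r,0;P(f))\le\sum_{i=1}^{m}\overline{N}(r,c_i;f)\le m\,T(r,f)+S(r,f)$, where $m$ is the number of distinct zeros of $P$ (equivalently $\Gamma_1=m_1+m_2$), whence $\overline{N}(r,0;G)\le\overline{N}(r,0;P(f))+\overline{N}(r,0;f(qz+\eta))+S(r,f)\le (m+1)\,T(r,f)+S(r,f)$. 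Substituting these bounds and the lower bound for $T(r,G)$ into the second main theorem gives $(n-1)\,T(r,f)\le (m+3)\,T(r,f)+S(r,f)$, i.e. $(n-m-4)\,T(r,f)\le S(r,f)$, a contradiction once $n>m+4$. For entire $f$ the two pole terms vanish, and the identical scheme read off with $\overline{N}(r,\infty;G)=S(r,f)$ lowers the requirement to $n>m$.

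I expect the main obstacle to lie in the growth step rather than the counting step. The delicate point is to secure an honest lower bound for $T(r,P(f)f(qz+\eta))$ and to be sure that the crude product estimate $(n-1)T(r,f)$ is not undercut by hidden cancellation between $P(f)$ and the shift $f(qz+\eta)$, since it is precisely this constant that fixes the admissible range of $n$. Closely tied to this is the bookkeeping that every $q$-shift error term genuinely collapses to $S(r,f)$ for zero order — in particular that the shift invariance of $T$, $N$ and the logarithmic-derivative bound all hold with the same exceptional set — for any slippage there would blur the clean boundary $n>m+4$ (resp. $n>m$).
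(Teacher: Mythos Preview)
Your meromorphic argument is essentially the one the paper uses (in the proof of its Theorem~\ref{t1}, which specializes to Theorem~D when $k=0$, $s=1$, $b_0=0$): both obtain the lower bound $(n-1)T(r,f)\le T(r,F)+S(r,f)$ from $T(r,P(f))\le T(r,F)+T(r,f(qz+\eta))$, apply the second main theorem, and bound $\overline N(r,0;F)\le(m+1)T(r,f)$ and $\overline N(r,\infty;F)\le 2T(r,f)$, yielding $n>m+4$. The only cosmetic difference is that you divide by $a$ and use the classical second main theorem, while the paper invokes Yamanoi's small-function version directly.

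There is, however, a genuine gap in your entire case. Running ``the identical scheme'' with $\overline N(r,\infty;G)=S(r,f)$ leaves you with
\[
(n-1)\,T(r,f)\ \le\ \overline N(r,0;G)+S(r,f)\ \le\ (m+1)\,T(r,f)+S(r,f),
\]
which forces only $n>m+2$, not $n>m$. Two units are lost: one in your crude lower bound $(n-1)T(r,f)$, and one from the term $\overline N(r,0;f(qz+\eta))\le T(r,f)$ which your scheme does not cancel. The paper recovers both via its Lemma~\ref{lem3.10a}: for entire $f$ one writes
\[
(n+1)T(r,f)=m\bigl(r,fP(f)\bigr)=m\!\left(r,\frac{F\cdot f}{f(qz+\eta)}\right)\le m(r,F)+m\!\left(r,\frac{f}{f(qz+\eta)}\right),
\]
and then bounds $m(r,f/f(qz+\eta))$ by combining the first main theorem, Lemma~\ref{lem3.9}, and the $q$-shift logarithmic-derivative estimate $m(r,f(qz+\eta)/f)=S(r,f)$. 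This yields the sharper inequality
\[
n\,T(r,f)\ \le\ T(r,F)-N(r,0;f(qz+\eta))+S(r,f),
\]
so that after the second main theorem the $\overline N(r,0;f(qz+\eta))$ term is absorbed by the subtracted $N(r,0;f(qz+\eta))$, leaving $n\,T(r,f)\le \overline N(r,0;P(f))+S(r,f)\le m\,T(r,f)+S(r,f)$ and hence the correct threshold $n>m$. Your product bound $T(r,P(f))\le T(r,F)+T(r,f(qz+\eta))$ cannot see this cancellation; you need to work at the level of proximity functions as in Lemma~\ref{lem3.10a}.
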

\begin{theoE}\cite{Xu & Liu & Cao & 2015}
	Let $f$ be a zero-order transcendental meromorphic (resp. entire) function, $q(\neq 0), \eta$ are complex constants. Then for $n > m+6$ (resp. $n > m+2$), $P(f)\{f(qz+\eta)-f(z)\}=a(z)$ has infinitely many solutions, where a(z) is a non-zero small functions in $f$.\end{theoE}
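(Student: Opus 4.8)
\emph{Sketch of proof (proposal).} The plan is to argue by contradiction via the Second Main Theorem, letting the zero-order hypothesis enter through the $q$-shift invariance of the Nevanlinna functionals. Write $F(z)=P(f)\{f(qz+\eta)-f(z)\}$ and $D(f)=f(qz+\eta)-f(z)$, and suppose, contrary to the assertion, that $F-a$ has only finitely many zeros, so that $\ol N(r,0;F-a)=S(r,f)$. Because $f$ has zero order, the $q$-difference analogue of the logarithmic derivative lemma of Barnett, Halburd, Korhonen and Morgan supplies $m\!\left(r,\tfrac{f(qz+\eta)}{f}\right)=S(r,f)$ together with the shift invariances $T(r,f(qz+\eta))=T(r,f)+S(r,f)$ and $N(r,f(qz+\eta))=N(r,f)+S(r,f)$; these are the facts that let one treat $f(qz+\eta)$ as interchangeable with $f$ up to an $S(r,f)$ error.

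First I would record a lower bound for $T(r,F)$. By the shift invariance $T(r,D(f))\le 2T(r,f)+S(r,f)$, while the Valiron--Mohon'ko theorem gives $T(r,P(f))=nT(r,f)+S(r,f)$ with $n=\deg P$. Since $P(f)=F/D(f)$, the first main theorem yields
\[
nT(r,f)\le T(r,F)+T(r,D(f))+O(1),
\]
whence $T(r,F)\ge (n-2)T(r,f)+S(r,f)$, and in particular $S(r,F)=S(r,f)$. Next I would bound the reduced counting functions occurring when the Second Main Theorem is applied to $F$ with the small targets $0,\infty,a$. The target $a$ is controlled by hypothesis: $\ol N(r,a;F)=\ol N(r,0;F-a)=S(r,f)$. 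For the zeros, $\ol N(r,0;F)\le \ol N(r,0;P(f))+\ol N(r,0;D(f))$, where $\ol N(r,0;P(f))=\sum_{c}\ol N(r,c;f)$ runs over the distinct zeros $c$ of $P$ (this is where the parameter $m$ enters, the count being $O(mT(r,f))$), and $\ol N(r,0;D(f))$ is handled by writing $D(f)=f\big(\tfrac{f(qz+\eta)}{f}-1\big)$ and invoking the logarithmic derivative lemma. For the poles, $\ol N(r,\infty;F)\le \ol N(r,\infty;P(f))+\ol N(r,\infty;D(f))\le 3\,\ol N(r,\infty;f)+S(r,f)$ by the shift invariance of the pole-counting function.

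Feeding these into $T(r,F)\le \ol N(r,0;F)+\ol N(r,\infty;F)+\ol N(r,a;F)+S(r,F)$ and using the lower bound for $T(r,F)$ collapses everything to an inequality of the shape $(n-2)T(r,f)\le (m+5)\,T(r,f)+S(r,f)$, which is impossible once $n$ exceeds $m$ by the stated margin, giving the contradiction. In the entire case every pole term vanishes ($\ol N(r,\infty;\cdot)\equiv0$), and removing that contribution is exactly what lowers the threshold from $n>m+6$ to $n>m+2$. I expect the main obstacle to be extracting the \emph{sharp} additive constants $6$ and $2$: the crude subadditive bounds above leak a unit or two (they yield only $n>m+7$ and $n>m+4$), and closing the gap requires the finer reduced-counting estimates for the zeros and poles of $D(f)$ that the $q$-difference logarithmic derivative lemma makes available for zero-order $f$.
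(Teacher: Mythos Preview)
The paper does not give its own proof of Theorem~E; it is quoted as a background result from \cite{Xu & Liu & Cao & 2015}. The nearest thing is the proof of Theorem~\ref{t1}, which contains Theorem~E as the special case $k=0$, $s=1$, $b_1=1$, $b_0=-1$, and that proof follows exactly your scheme: assume $F-\alpha$ has finitely many zeros, bound $T(r,F)$ below in terms of $T(r,f)$, apply the second main theorem for small functions to $F$ at $0,\infty,\alpha$, estimate each counting function via the zero-order $q$-shift lemmas (Lemmas~\ref{lem3.1} and~\ref{lem2.2}), and force a contradiction with the degree hypothesis. So your overall strategy is correct and coincides with the paper's.

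The one refinement that closes the constant gap you flag is the lower bound. Rather than the subadditive $T(r,P(f))\le T(r,F)+T(r,D(f))$, the paper writes $f^{s}P(f)=F\cdot f^{s}/L(z,f)^{s}$ and uses $m(r,L(z,f)/f)=S(r,f)$ to obtain, in the entire case (Lemma~\ref{lem3.10a}), the sharper inequality $mT(r,f)\le T(r,F)-N(r,0;L(z,f))+S(r,f)$; the extra term $-N(r,0;L(z,f))$ is then carried through the second main theorem and cancels exactly against the contribution of the zeros of $L(z,f)$ to $\ol N(r,0;F)$, so the difference factor costs nothing on the zero side. In the meromorphic case the analogous manipulation saves a unit in the lower bound. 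This is precisely the finer treatment of the zeros of $D(f)$ that you anticipate in your closing sentence.
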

For the uniqueness of difference and $q$-difference of meromorphic functions Zhang and Korhonen \cite{Zhang & Korhonen & 2010} obtained the following results.
\begin{theoF}\cite{Zhang & Korhonen & 2010}
Let $f$ and $g$ be two transcendental meromorphic (resp. entire) functions of zero order. Suppose that $q$ is a non-zero complex constant and $n$ is an integer satisfying $n\geq8$ (resp. $n\geq4$). If $f(z)^nf(qz)$ and $g(z)^ng(qz)$ share $1$, $\infty$ CM, then $f(z)\equiv tg(z)$ for $t^{n+1}=1$.
\end{theoF}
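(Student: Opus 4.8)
The plan is to convert the two sharing hypotheses into a single algebraic identity and then squeeze the admissible range of $n$ out of the Second Main Theorem, using throughout that $f$ and $g$ have order zero. Concretely, I will rely on the zero-order $q$-shift estimates behind \cite{Barnett & Halburd & 2007, Zhang & Korhonen & 2010}, in the forms $T(r,f(qz))=T(r,f)+S(r,f)$, $\ol N(r,0;f(qz))=\ol N(r,0;f)+S(r,f)$, and the characteristic identity $T(r,F)=(n+1)T(r,f)+S(r,f)$ for $F:=f(z)^nf(qz)$, and similarly for $G:=g(z)^ng(qz)$. Since $F$ and $G$ share $\infty$ CM, $F-1$ and $G-1$ have exactly the same poles with the same multiplicities, and since they share $1$ CM they have exactly the same zeros with the same multiplicities; hence $(F-1)/(G-1)$ is entire and zero-free, so $(F-1)/(G-1)=e^{\gamma}$ for some entire $\gamma$. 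As $F,G$ are of order zero the quotient is of order zero, and a zero-free entire function of order zero is constant; thus $(F-1)/(G-1)\equiv A$ with $A\in\mathbb{C}\setminus\{0\}$, i.e.\ $F=AG+(1-A)$. In particular $T(r,F)=T(r,G)+O(1)$, which forces $T(r,f)=T(r,g)+S(r)$ and $S(r,f)=S(r,g)=:S(r)$.

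The crux is to rule out $A\neq1$. Setting $B=1-A\neq0$, the relation $F-B=AG$ shows that the $B$-points of $F$ coincide, with multiplicity, with the zeros of $G$, so $\ol N(r,B;F)=\ol N(r,0;G)$. Applying the Second Main Theorem to $F$ with targets $0,B,\infty$ gives
\[
T(r,F)\le \ol N(r,0;F)+\ol N(r,B;F)+\ol N(r,\infty;F)+S(r,F).
\]
Here each term is controlled by using that the zeros of $f^n$ are at least $n$-fold: $\ol N(r,0;F)\le \ol N(r,0;f)+\ol N(r,0;f(qz))\le 2T(r,f)+S(r)$, likewise $\ol N(r,\infty;F)\le 2T(r,f)+S(r)$ in the meromorphic case (and $\equiv0$ in the entire case), and $\ol N(r,B;F)=\ol N(r,0;G)\le 2T(r,g)+S(r)=2T(r,f)+S(r)$. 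Inserting $T(r,F)=(n+1)T(r,f)+S(r)$ yields $(n-5)T(r,f)\le S(r)$ in the meromorphic case and $(n-3)T(r,f)\le S(r)$ in the entire case, which for $n\ge8$ (resp.\ $n\ge4$) contradicts $T(r,f)\to\infty$. Hence $A=1$, i.e.\ $F\equiv G$, so $f(z)^nf(qz)\equiv g(z)^ng(qz)$.

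It then remains to extract the rigidity. Writing $h=f/g$, a meromorphic function of order zero, and dividing the identity $f^nf(qz)\equiv g^ng(qz)$ by $g^ng(qz)$ produces the $q$-difference equation $h(z)^nh(qz)\equiv1$, that is $h(qz)=h(z)^{-n}$. Passing to characteristic functions and invoking the zero-order shift lemma gives $T(r,h)+S(r,h)=T(r,h(qz))=T(r,h^{-n})=n\,T(r,h)+O(1)$, so $(n-1)T(r,h)=S(r,h)$; since $n\ge8>1$ this forces $T(r,h)=S(r,h)$, whence $h$ is a nonzero constant $t$. Substituting back, $t^nt=t^{n+1}=1$, so $f\equiv tg$ with $t^{n+1}=1$, as required.

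I expect the main obstacle to be the second step: the argument lives or dies on the zero-order $q$-shift machinery, and the delicate points are justifying the characteristic identity $T(r,F)=(n+1)T(r,f)+S(r,f)$ and ensuring the error terms are genuinely uniform so that $S(r,f)=S(r,g)$ holds along a common sequence. The quantitative heart is the bookkeeping in the Second Main Theorem, where the gain from the $n$-fold zeros of $f^n$ in the bound for $\ol N(r,0;F)$ is precisely what turns the hypothesis on $n$ into a contradiction; the final functional-equation step is then a short but essential use of the shift lemma to force $h$ to be constant.
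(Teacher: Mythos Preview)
The paper does not contain a proof of Theorem~F: it is quoted from \cite{Zhang & Korhonen & 2010} as background motivation, and the paper's own results (Theorems~\ref{t1}--\ref{t3}) are generalizations in different directions that neither specialize to Theorem~F with the stated bounds nor treat the meromorphic case. So there is no ``paper's own proof'' to compare your attempt against.

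That said, your argument is sound and is in fact close to how Zhang--Korhonen themselves proceed. Because the sharing is CM in both $1$ and $\infty$, you can bypass the auxiliary function $\mathcal{H}$ of (\ref{e3.1}) entirely: the quotient $(F-1)/(G-1)$ is zero-free entire of order zero, hence constant, which immediately puts you in what the present paper would call ``Case~2''. Your use of the Second Main Theorem at the three targets $0,B,\infty$ to eliminate $A\neq1$, together with the $q$-shift estimate $T(r,h(qz))=T(r,h)+S(r,h)$ applied to $h=f/g$ to force $h$ constant from $h(z)^n h(qz)\equiv1$, is exactly the right endgame. The numerical bookkeeping $(n-5)T(r,f)\le S(r)$ (meromorphic) and $(n-3)T(r,f)\le S(r)$ (entire) is correct and gives the thresholds $n\ge 8$ and $n\ge 4$ with room to spare.

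By contrast, the machinery this paper develops (Lemmas~\ref{lem3.5}--\ref{lem3.15} and the $\mathcal{H}\not\equiv0$ case analysis) is built to handle the weaker hypothesis of sharing with finite weight, where your order-zero ``$(F-1)/(G-1)$ is constant'' shortcut is unavailable. For Theorem~F itself your direct route is both shorter and sharper.
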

\begin{theoG}\cite{Zhang & Korhonen & 2010}
	Let $f$ and $g$ be two transcendental entire functions of zero order. Suppose that $q$ is a non-zero complex constant and $n$ is an integer satisfying $n\geq6$. If $f(z)^n(f(z)-1)f(qz)$ and $g(z)^n(g(z)-1)g(qz)$ share $1$ CM, then $f(z)\equiv g(z)$.
\end{theoG}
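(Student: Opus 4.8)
The plan is to set $F(z)=f(z)^n(f(z)-1)f(qz)$ and $G(z)=g(z)^n(g(z)-1)g(qz)$ and to work entirely within zero-order Nevanlinna theory, where the $q$-difference analogue of the logarithmic derivative lemma of Barnett--Halburd--Korhonen gives $T(r,f(qz))=(1+o(1))T(r,f)$ and $S(r,f(qz))=S(r,f)$, and similarly for $g$. A first computation of the characteristic functions then yields $T(r,F)=(n+2)T(r,f)+S(r,f)$ and $T(r,G)=(n+2)T(r,g)+S(r,g)$; in particular both $F$ and $G$ are transcendental of order zero.

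First I would exploit the hypothesis that $F$ and $G$ share $1$ CM. Since $f,g$ are entire, so are $F,G$, and the CM sharing forces $(F-1)/(G-1)$ to be a zero-free entire function whose reciprocal is also entire, hence of the form $e^{\gamma}$ with $\gamma$ entire. Because $F$ and $G$ have order zero, $e^{\gamma}$ has order zero, which compels $\gamma$ to be constant. Writing $A=e^{\gamma}$ I obtain $F-1\equiv A(G-1)$.

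Next I would dispose of the case $A\neq 1$. Here $F=0$ exactly when $G=b:=(A-1)/A$, and $b\notin\{0,1\}$. Applying the second fundamental theorem to the entire function $G$ at the three values $0,b,\infty$, together with the elementary estimates $\overline N(r,0;G)\le \overline N(r,0;g)+\overline N(r,1;g)+\overline N(r,0;g(qz))\le 3T(r,g)+S(r,g)$ and $\overline N(r,b;G)=\overline N(r,0;F)\le 3T(r,f)+S(r,f)$, I would get $(n-1)T(r,g)\le 3T(r,f)+S$. The symmetric inequality $(n-1)T(r,f)\le 3T(r,g)+S$ follows by interchanging the roles of $f$ and $g$, and adding the two gives $(n-4)\bigl(T(r,f)+T(r,g)\bigr)\le S(r,f)+S(r,g)$, which is impossible for $n\ge 6$. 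Hence $A=1$ and $F\equiv G$.

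It remains to pass from $F\equiv G$ to $f\equiv g$, and this is where I expect the main difficulty to lie. Setting $h=f/g$ (a zero-order meromorphic function) and using $f(qz)=h(qz)g(qz)$, the identity $f^n(f-1)f(qz)\equiv g^n(g-1)g(qz)$ simplifies to $h^{n}h(qz)\bigl(hg-1\bigr)\equiv g-1$, equivalently $g\bigl(h^{n+1}h(qz)-1\bigr)\equiv h^{n}h(qz)-1$. If $h^{n+1}h(qz)\equiv 1$ this immediately forces $h^{n}h(qz)\equiv 1$ as well, so $h\equiv 1$; and if $h$ is a nonzero constant $c$, the identity reduces to $(c^{n+2}-1)g\equiv c^{n+1}-1$, which for transcendental $g$ gives $c^{n+2}=c^{n+1}=1$ and hence $c=1$. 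In either case $f\equiv g$. The hard part is to rule out a nonconstant $h$ with $h^{n+1}h(qz)\not\equiv1$: here I would write $g=\dfrac{h^{n}h(qz)-1}{h^{n+1}h(qz)-1}$ and exploit that $g$ is entire, so that every zero of the denominator must be cancelled by a zero of the numerator; at such a point $h^{n+1}h(qz)=h^{n}h(qz)=1$ forces $h=1$, confining the zeros of $h^{n+1}h(qz)-1$ to the $1$-points of $h$. Feeding this back into the second fundamental theorem for the zero-order function $h^{n}h(qz)$ (again using $T(r,h(qz))=(1+o(1))T(r,h)$) should produce a contradiction with $n\ge 6$ unless $h$ is constant, thereby completing the proof. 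Carrying out this last counting argument cleanly, and in particular tracking the interaction between the zeros of $g$, the poles of $h$, and the shared $1$-points, is the delicate step.
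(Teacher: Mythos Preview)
Theorem G is not proved in this paper; it is quoted from Zhang--Korhonen as background, so there is no ``paper's own proof'' to compare against. The paper's new results (Theorems \ref{t2} and \ref{t3}) treat generalizations via the auxiliary function $\mathcal{H}$ of (\ref{e3.1}), and notably their conclusions stop at the dichotomy ``$f\equiv tg$ or $f,g$ satisfy an algebraic difference equation'' rather than forcing $f\equiv g$ outright. In that sense your proposal attempts strictly more than what the paper establishes in its own theorems.

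Your route is genuinely different from the paper's machinery and, for this particular statement, more economical: because $F,G$ are entire of order zero and share $1$ CM, the shortcut $F-1=A(G-1)$ with constant $A$ is immediate, bypassing the whole $\mathcal{H}$-function case analysis that the paper deploys for its weighted-sharing theorems. Your elimination of $A\neq1$ via the second main theorem is correct; using Lemma \ref{lem3.10} for the lower bound $(n+1)T(r,g)\le T(r,G)-N(r,0;g(qz))+S(r,g)$ makes the bookkeeping clean and indeed yields $(n-4)(T(r,f)+T(r,g))\le S(r,f)+S(r,g)$.

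The only soft spot is the final step, which you flag yourself. It does go through along the lines you indicate. With $h=f/g$ nonconstant and $\Psi:=h^{n+1}h(qz)\not\equiv1$, the identity $g(\Psi-1)=h^{n}h(qz)-1$ and the entirety of $g$ force every $1$-point of $\Psi$ to be a $1$-point of $h$, so $\ol N(r,1;\Psi)\le \ol N(r,1;h)\le T(r,h)+O(1)$. Since $nT(r,h)\le T(r,\Psi)+S(r,h)$ (from $T(r,h^{n+1})\le T(r,\Psi)+T(r,h(qz))$) and $\ol N(r,0;\Psi)+\ol N(r,\infty;\Psi)\le 4T(r,h)+S(r,h)$, the second main theorem applied to $\Psi$ gives $(n-5)T(r,h)\le S(r,h)$, a contradiction for $n\ge6$. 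The degenerate case $\Psi\equiv\text{const}\neq1$ is disposed of by the same characteristic comparison $nT(r,h)=S(r,h)$. So your sketch is completable and the threshold $n\ge6$ is exactly what is needed.
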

To generalize the above result, Liu, Liu and Cao \cite{Liu & Liu & Cao & 2011}, obtained the following result.
\begin{theoH}\cite{Liu & Liu & Cao & 2011}
	Let $f(z)$ and $g(z)$ be transcendental entire functions with zero order. If $n\geq m +5$, $f(z)^n(f(z)^m-a)f(qz+c)$ and $g(z)^n(g(z)^m-a)g(qz +c)$ share a nonzero polynomial $p(z)$ CM, then $f(z)\equiv tg(z)$, where $t^{n+1}=t^m=1$.
\end{theoH}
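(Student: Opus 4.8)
The plan is to carry the classical uniqueness argument over to the $q$-shift setting, using the zero-order hypothesis to render every shift ``transparent'' at the level of characteristic and counting functions. Write $F=f^n(f^m-a)f(qz+c)$ and $G=g^n(g^m-a)g(qz+c)$; both are entire of zero order. First I would record the growth estimates. Using the $q$-difference analogue of the lemma on the logarithmic derivative (Barnett--Halburd--Korhonen--Morgan) together with the zero-order assumption, one has $T(r,f(qz+c))=T(r,f)+S(r,f)$ and the corresponding equalities for the reduced counting functions; combining these with the Valiron--Mohon'ko identity for the polynomial part $f^n(f^m-a)$ yields $T(r,F)=(n+m+1)T(r,f)+S(r,f)$ and likewise $T(r,G)=(n+m+1)T(r,g)+S(r,g)$.

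Next, since $F$ and $G$ are entire and share the nonzero polynomial $p$ CM, the quotient $(F-p)/(G-p)$ is a zero-free entire function whose reciprocal is also entire, hence equals $e^{\gamma}$ for some entire $\gamma$. Because $F$, $G$ and $p$ all have zero order, so does $(F-p)/(G-p)$, which forces $\gamma$ to be constant; thus $(F-p)/(G-p)=A$ for some nonzero constant $A$, i.e. $F=A(G-b)$ with $b=(A-1)p/A$. From $T(r,F)=T(r,G)+S$ I also obtain $T(r,f)=T(r,g)+S(r,g)$, so the two functions have the same growth and a common error term $S(\,\cdot\,)$. The argument now splits according to whether $A=1$.

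Suppose first $A\neq1$. Applying the second fundamental theorem to $G$ with the three targets $0$, $\infty$ and the small function $b$, and using that $G$ is entire together with the equivalence $G=b\Leftrightarrow F=0$, gives $T(r,G)\le \ol N(r,1/G)+\ol N(r,1/F)+S(r,G)$. Estimating $\ol N(r,1/G)\le \ol N(r,1/g)+\ol N(r,1/(g^m-a))+\ol N(r,1/g(qz+c))\le (m+2)T(r,g)+S(r,g)$, and the analogous bound $\ol N(r,1/F)\le (m+2)T(r,g)+S(r,g)$, I arrive at $(n+m+1)T(r,g)\le (2m+4)T(r,g)+S(r,g)$. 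For $n\ge m+5$ this contradicts the transcendence of $g$, so the case $A\neq1$ cannot occur.

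It remains to treat $A=1$, where $F\equiv G$. Putting $h=f/g$ and writing $h_q=h(qz+c)$, substitution into $F=G$ and cancellation of $g^n g(qz+c)$ reduce the identity to $g^m\bigl(h^{n+m}h_q-1\bigr)=a\bigl(h^n h_q-1\bigr)$, equivalently $h^n h_q\,(f^m-a)=g^m-a$. If $h^{n+m}h_q\equiv1$, then the right-hand side forces $h^nh_q\equiv1$, whence $h^m\equiv1$; being meromorphic with values in a discrete set, $h$ is a constant $t$ with $t^m=1$ and $t^{n+1}=1$. If instead $h^{n+m}h_q\not\equiv1$, then $g^m=a(h^nh_q-1)/(h^{n+m}h_q-1)$, and the hard part of the proof is to show this cannot hold with $h$ non-constant: one compares, via the second fundamental theorem, the zeros of $h^{n+m}h_q-1$ and of $h^nh_q-1$ and uses that $g^m$ is entire to absorb the surplus zeros of the denominator, forcing $T(r,g)=S(r,g)$, a contradiction. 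Hence $h$ is a constant $t$ in all cases, and substituting $f=tg$ back into $F\equiv G$ and matching the nonconstant $g^m$-term against the constant term gives $t^{n+m+1}=1$ and $t^{n+1}=1$, i.e. $t^{n+1}=t^m=1$, as claimed. I expect this last step---ruling out non-constant $h$ in the case $h^{n+m}h_q\not\equiv1$---to be the principal obstacle, since it is exactly here that the precise value-distribution bookkeeping, and the threshold on $n$, are needed.
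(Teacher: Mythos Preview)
This paper does not prove Theorem H; it is quoted from Liu--Liu--Cao as prior work. The nearest result proved here is Theorem~\ref{t2}, a generalization that carries an additional possible conclusion: $f$ and $g$ may merely satisfy the algebraic difference equation $\mathcal{A}(f,g)=0$ without being proportional. Your proposal should therefore be compared with the proof of Theorem~\ref{t2}.

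Your reduction to a constant ratio is more direct than the paper's route. Exploiting CM sharing and zero order, you note that $(F-p)/(G-p)$ is a zero-free entire function of order zero, hence a nonzero constant $A$; this is correct. The paper instead uses the auxiliary function $\mathcal{H}$ of (\ref{e3.1}) and the weighted-sharing Lemmas~\ref{lem3.5}--\ref{lem3.8}, because Theorem~\ref{t2} is formulated under the weaker hypothesis of $(0,2)$-sharing. Under CM your shortcut legitimately bypasses the entire $\mathcal{H}\not\equiv0$ case, and your disposal of $A\neq1$ via the second fundamental theorem then parallels the paper's Subcases 2.1--2.3.

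The gap is exactly where you locate it. After $F\equiv G$ and the reduction to $g^m(h^{n+m}h_q-1)=a(h^nh_q-1)$ with $h=f/g$, you must exclude non-constant $h$. Your sketch---compare the zeros of $h^{n+m}h_q-1$ and $h^nh_q-1$ and use entirety of $g^m$ to ``absorb'' the surplus, forcing $T(r,g)=S(r,g)$---is not yet a proof: $h$ has poles at the zeros of $g$, the two zero-sets are not obviously comparable, and the naive characteristic estimates only yield bounds of the form $mT(r,g)\le C\,(T(r,f)+T(r,g))+S$. More to the point, the present paper offers no help here: the proof of Theorem~\ref{t2} does \emph{not} eliminate non-constant $h$ but simply records it as conclusion (ii). To reach the sharper statement of Theorem H you must supply an argument specific to $P(w)=w^m-a$, as in the original Liu--Liu--Cao paper, and that step is genuinely missing from your outline.
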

%\begin{theoH}\cite{Luo & Lin &  2011}
%	Let $f$ and $g$ be transcendental entire functions of finite order, $c$ a nonzero complex constant and let $n>2\Gamma_{0}+1$ be an integer. If $P(f)f(z+c)$ and $P(g)g(z+c)$ share $1$ CM, then one of the following results holds:
	%\begin{enumerate}
		%\item[(i).] $f\equiv tg$ for a constant $t$ such that $t^d=1$, where $d=\text{GCD}\{\lambda_0,\lambda_1,\ldots,\lambda_m\}$ and \beas \lambda_i=\begin{cases} i+1, \;\text{if}\; a_i\neq0\\ n+1, \; \text{if}\; a_0=0.\end{cases}, i=0,1,\ldots,m;\eeas
		%\item[(ii).]  $f$ and $g$ satisfy the algebraic equation $R(f; g)=0$, where $R(w_1; w_2)=P(w_1)w_1(z+c)-P(w_2)w_2(z+c)$;
		%\item[(iii).]  $f(z)=e^{\alpha(z)}; $ $g(z)=e^{\beta(z)}$, where $\alpha(z)$ and $\beta(z)$ are two polynomials, $b$ is a constant satisfying $\alpha+\beta=b$ and $a_n^{2}e^{(n+1)b}=1$.
	%\end{enumerate}\end{theoH}
In 2015, Xu, Liu and Cao \cite{Xu & Liu & Cao & 2015} also investigated the uniqueness problems of $q$-shift of entire functions and obtained the following results.
\begin{theoI}\cite{Xu & Liu & Cao & 2015}
	Let $f$ and $g$ be transcendental entire functions of finite order, $c$ a nonzero complex constant and let $n>2\Gamma_{0}+1$ be an integer. If $P(f)f(qz+c)$ and $P(g)g(qz+c)$ share $1$ CM, then one of the following results holds:
	\begin{enumerate}
		\item[(i).] $f\equiv tg$ for a constant $t$ such that $t^d=1$, where $d=\text{GCD}\{\lambda_0,\lambda_1,\ldots,\lambda_m\}$ and \beas \lambda_i=\begin{cases} i+1, \;\text{if}\; a_i\neq0\\ n+1, \; \text{if}\; a_0=0.\end{cases}, i=0,1,\ldots,m;\eeas
		\item[(ii).]  $f$ and $g$ satisfy the algebraic equation $R(f; g)=0$, where $R(w_1; w_2)=P(w_1)w_1(z+c)-P(w_2)w_2(z+c)$;
		\item[(iii).]  $fg\equiv\mu$, where $\mu$ is a complex constant satisfying $a_n^2\mu^{n+1}\equiv 1$.
\end{enumerate}\end{theoI}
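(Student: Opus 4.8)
The plan is to set $F:=P(f)\,f(qz+c)$ and $G:=P(g)\,g(qz+c)$. Since $f,g$ are transcendental entire of finite order, so are $F,G$, and by hypothesis $F,G$ share $1$ CM. The first task is to compute their characteristics: using the Valiron--Mohon'ko theorem for the polynomial part $P(f)$ together with the $q$-shift estimate $T(r,f(qz+c))=T(r,f)+S(r,f)$ (a consequence of the $q$-difference analogue of the lemma on the logarithmic derivative of Barnett, Halburd, Korhonen and Morgan), I would establish
\[ T(r,F)=(n+1)T(r,f)+S(r,f),\qquad T(r,G)=(n+1)T(r,g)+S(r,g), \]
where $n=\deg P$ and $a_n$ is its leading coefficient. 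Alongside this I would record the zero estimates $\ol N(r,0;f(qz+c))\le T(r,f)+S(r,f)$ and, splitting the roots of $P$ into its $m_1$ simple and $m_2$ multiple ones, a truncated estimate for the zeros of $P(f)$ carrying precisely the weight $\Gamma_0=m_1+2m_2$ (each simple root contributing a reduced count, each multiple root twice).

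Next, since $F$ and $G$ are entire of finite order sharing $1$ CM, the quotient satisfies $(F-1)/(G-1)=e^{\gamma}$ for a polynomial $\gamma$, whence $F'/(F-1)-G'/(G-1)=\gamma'$. Following the standard treatment of CM-shared values, I would split into the cases $\gamma'\equiv 0$ and $\gamma'\not\equiv 0$ and argue that, unless one of the algebraic identities $F\equiv G$ or $FG\equiv 1$ holds, the second main theorem forces
\[ T(r,F)\le N_2(r,0;F)+N_2(r,0;G)+S(r,F)+S(r,G), \]
the pole terms dropping out because $f,g$ are entire.

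If $F\equiv G$, then $P(f)f(qz+c)\equiv P(g)g(qz+c)$, which is exactly the relation $R(f;g)=0$ of conclusion (ii). In the special case that $f/g$ is a constant $t$, substituting $f=tg$ and $f(qz+c)=t\,g(qz+c)$ and cancelling $g(qz+c)$ gives the polynomial identity $t\,P(tg)\equiv P(g)$ in $g$; comparing the surviving powers forces $t^{\,i+1}=1$ whenever $a_i\ne 0$, hence $t^{d}=1$ with $d=\mathrm{GCD}\{\lambda_i:a_i\ne 0\}$ --- conclusion (i). If instead $FG\equiv 1$, then $F$ is zero-free, so both $P(f)$ and $f(qz+c)$ are zero-free; thus $f$ omits $0$ together with every root of $P$, and since a transcendental entire function omits at most one finite value, $P$ must reduce to the monomial $a_nz^{n}$. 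Writing $f=e^{\alpha}$, $g=e^{\beta}$ with $\alpha,\beta$ polynomials and examining degrees in the exponential identity produced by $FG\equiv 1$ then shows $fg$ must reduce to a constant $\mu$, and substituting back yields $a_n^{2}\mu^{n+1}=1$ --- conclusion (iii).

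The main obstacle is eliminating the remaining inequality alternative, which is precisely where the hypothesis $n>2\Gamma_0+1$ must be spent. Here one inserts the characteristic identity $T(r,F)=(n+1)T(r,f)+S(r,f)$ and the truncated zero counts into the inequality above; the delicate bookkeeping is to show $N_2(r,0;F)\le(\Gamma_0+1)T(r,f)+S(r,f)$ --- the weight $\Gamma_0$ coming from the roots of $P$ and the single extra unit from the shift factor $f(qz+c)$ --- and symmetrically for $G$, so that combining the two inequalities yields $(n+1)\le 2(\Gamma_0+1)$, i.e. $n\le 2\Gamma_0+1$, contradicting the hypothesis and thereby forcing $F\equiv G$ or $FG\equiv 1$. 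Making this counting sharp enough to reach exactly the threshold $2\Gamma_0+1$ rather than a weaker bound, and rigorously justifying the $q$-shift characteristic and counting estimates in the finite-order (rather than zero-order) regime, is where the real work lies; the algebraic analyses leading to (i), (ii) and (iii) are comparatively routine once the inequality case has been excluded.
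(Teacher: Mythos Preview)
Theorem I is not proved in this paper at all: it is quoted verbatim from Xu, Liu and Cao (2015) as background, and the paper supplies no argument for it. Consequently there is no ``paper's own proof'' to compare your proposal against. What the present paper does prove is its Theorem~\ref{t3}, a differential--difference generalisation with $L(z,f)$ in place of $f(qz+c)$ and a $k$-th derivative, under weighted sharing $(1,l)$ rather than CM sharing; specialising $k=0$, $s=1$, $b_0=0$, $b_1=1$ and $l=\infty$ recovers (a zero-order version of) Theorem I.

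That said, your outline is sound and close in spirit to the paper's proof of Theorem~\ref{t3}. The main methodological difference is this: you exploit CM sharing directly via $(F-1)/(G-1)=e^{\gamma}$ and split on whether $\gamma'$ vanishes, whereas the paper works throughout with the auxiliary function $\mathcal{H}$ of \eqref{e3.1} and splits on $\mathcal{H}\equiv 0$ versus $\mathcal{H}\not\equiv 0$. For genuine CM sharing your route is shorter; the $\mathcal{H}$-machinery is what allows the paper to handle the weaker weighted hypotheses $l=0,1,2$ uniformly. In the $\mathcal{H}\equiv 0$ case the paper obtains the M\"obius relation \eqref{e4.20} and runs through the subcases $B\neq 0,\,A\neq B$; $B\neq 0,\,A=B$; $B=0$, arriving at $F\equiv G$ or $FG\equiv 1$ just as you do, and the endgame analyses for conclusions (i)--(iii) match yours (Picard forcing $P$ to a monomial, then zero order forcing $f$ constant in the $FG\equiv1$ branch, etc.).

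One caution: your claimed equality $T(r,F)=(n+1)T(r,f)+S(r,f)$ is stronger than what the paper actually uses. The paper only establishes the one-sided bound $mT(r,f)\le T(r,\Phi)-N(r,0;L(z,f))+S(r,f)$ (Lemma~\ref{lem3.10a}) and feeds this, together with Lemma~\ref{lem3.11}, into the second main theorem; this is enough for the contradiction and avoids having to prove the upper bound separately. Also, the $q$-shift estimates you invoke (your ``$T(r,f(qz+c))=T(r,f)+S(r,f)$'') are stated in this paper only for zero order (Lemma~\ref{lem2.2}); the ``finite order'' in the statement of Theorem~I appears to be a transcription slip, since the cited source and all surrounding results are for zero order.
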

\begin{theoJ}\cite{Xu & Liu & Cao & 2015}
	Let $f$ and $g$ be two transcendental entire functions of zero order, and let $q\in \mathbb{C}-\{0\}$, $c\in\mathbb{C}$. Suppose $E_l(1; P(f)f(qz+c))=E_l(1; P(g)g(qz+c))$ and $l$, $m$, $n$ are integers satisfying one of the following conditions :
	\begin{enumerate}
		\item[(i).] $l\geq 3$, $n>2\Gamma_{0}+1$;
		
		\item[(ii).]  $l=2$, $n>2\Gamma_{0}+\Gamma_{1}+2-\lambda$;
		\item[(iii).]  $l=1$, $n>2\Gamma_{0}+2\Gamma_{1}+3-2\lambda$;
		\item[(iv).]  $l=0$, $n>2\Gamma_{0}+3\Gamma_{1}+4-3\lambda$. Then the conclusions of Theorem I hold, where $\lambda=\textit{min}\{\Theta(0;f);\Theta(0; g)\}$.
\end{enumerate}\end{theoJ}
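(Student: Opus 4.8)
The plan is to run the standard weighted-sharing machinery on the pair
\[
F:=P(f)\,f(qz+c),\qquad G:=P(g)\,g(qz+c),
\]
which by hypothesis satisfy $E_l(1;F)=E_l(1;G)$, i.e.\ $F$ and $G$ share $(1,l)$. First I would assemble the Nevanlinna bookkeeping. Since $f,g$ are entire of zero order, the $q$-difference analogue of the lemma on the logarithmic derivative of Barnett--Halburd--Korhonen--Morgan gives the shift-invariance estimates $T(r,f(qz+c))=T(r,f)+S(r,f)$ and $\ol N(r,0;f(qz+c))=\ol N(r,0;f)+S(r,f)$ (and likewise for $g$), so that $S(r,F),S(r,G),S(r,f),S(r,g)$ are all interchangeable. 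Combined with the Valiron--Mohon'ko theorem for $P(f)$, these give $T(r,F)=(n+1)T(r,f)+S(r,f)$ and $T(r,G)=(n+1)T(r,g)+S(r,g)$. The zeros of $F$ split into the roots of $P(f)$ and the zeros of the shift $f(qz+c)$: the truncated count $N_2(r,0;P(f)):=\ol N(r,0;P(f))+\ol N(r,0;P(f)\mid\ge 2)$ produces the factor $\Gamma_0=m_1+2m_2$ (multiple roots counted twice), the reduced count produces $\Gamma_1=m_1+m_2$, while the definition of $\Theta(0;f)$ gives $\ol N(r,0;f(qz+c))\le(1-\lambda+o(1))T(r,f)$. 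This last inequality is the exact mechanism by which $\lambda$ lowers the thresholds.

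Next I would introduce the auxiliary function
\[
H=\left(\frac{F''}{F'}-\frac{2F'}{F-1}\right)-\left(\frac{G''}{G'}-\frac{2G'}{G-1}\right)
\]
and split into the cases $H\not\equiv 0$ and $H\equiv 0$. For $H\not\equiv 0$, Lahiri's weighted-sharing lemmas bound $N^{1)}_{E}(r,1;F)$ and the leftover coincidence terms by $N(r,\infty;H)+S$; since $F,G$ are entire (no poles), the resulting master inequality takes the form
\[
(n+1)\{T(r,f)+T(r,g)\}\le\big[2\Gamma_0+k\Gamma_1+(k+2)-k\lambda\big]\{T(r,f)+T(r,g)\}+S(r,f)+S(r,g),
\]
where $k=\max\{0,\,3-l\}$ records how many $\ol N_*(r,1;F,G)$ terms survive at weight $l$. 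Each unit drop in $l$ costs one extra reduced coincidence term, hence one extra $\Gamma_1$ and one extra additive constant, while the deficiency saves one $\lambda$; specializing $k=0,1,2,3$ reproduces exactly the four thresholds $2\Gamma_0+1$, $2\Gamma_0+\Gamma_1+2-\lambda$, $2\Gamma_0+2\Gamma_1+3-2\lambda$, $2\Gamma_0+3\Gamma_1+4-3\lambda$. Under each hypothesis on $n$ the bracket is strictly smaller than $n+1$, forcing $T(r,f)+T(r,g)\le S$, a contradiction; thus $H\not\equiv 0$ is impossible.

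For $H\equiv 0$, integrating twice yields a bilinear relation $F=\dfrac{(b+1)G+(a-b-1)}{bG+(a-b)}$ with constants $a(\neq 0),b$. A case analysis on $(a,b)$, using the characteristic identities above and the absence of poles to discard the generic branches, leaves precisely $F\equiv G$ or $FG\equiv\text{const}$. From $F\equiv G$, i.e.\ $P(f)f(qz+c)\equiv P(g)g(qz+c)$, setting $h=f/g$ and invoking zero order and transcendence forces either $h$ constant, whence comparing coefficients in $t\,P(tg)\equiv P(g)$ gives $t^{\,i+1}=1$ for every $i$ with $a_i\neq 0$, i.e.\ $f\equiv tg$ with $t^d=1$ for $d=\mathrm{GCD}\{\lambda_0,\ldots,\lambda_m\}$ (conclusion (i)), or $h$ nonconstant, giving the algebraic relation $R(f,g)=0$ (conclusion (ii)). From $FG\equiv\text{const}$ the zero-free product forces $fg\equiv\mu$ with $a_n^2\mu^{n+1}=1$ (conclusion (iii)).

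The main obstacle is twofold. The delicate analytic input is confirming that the shift estimates and the product identity $T(r,F)=(n+1)T(r,f)+S(r,f)$ hold with only an $S(r,f)$ error in the zero-order, possibly infinite-exceptional-set regime; here one must be careful that the exceptional sets of finite logarithmic measure produced by the $q$-difference lemma do not accumulate when combined with $\Theta(0;f)$. The delicate combinatorial input is the precise accounting that turns the weight-$l$ coefficient of $\ol N_*(r,1;F,G)$ into the stated $2\Gamma_0+k\Gamma_1+(k+2)-k\lambda$, together with the elimination of the spurious bilinear branches in the $H\equiv 0$ case, which is where the entireness (vanishing of the pole counting functions) of $F$ and $G$ is used most heavily.
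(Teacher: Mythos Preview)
The paper does not actually prove Theorem J; it is quoted in the introduction as a known background result from Xu--Liu--Cao (2015), and no proof of it appears anywhere in the manuscript. The paper's own contribution in this direction is Theorem \ref{t3}, a generalization in which $f(qz+c)$ is replaced by $L(z,f)$ and a $k$-th derivative is allowed. So strictly speaking there is no ``paper's proof'' of Theorem J to compare against.

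That said, your outline is the standard template and matches closely the argument the paper gives for its generalization Theorem \ref{t3}: introduce $H$, dispose of $H\not\equiv 0$ by the weighted-sharing inequalities of Lahiri/Banerjee combined with the second fundamental theorem, then integrate $H\equiv 0$ to a bilinear relation and run the case split to reach $F\equiv G$ or $FG\equiv 1$. Two points where the paper's treatment of Theorem \ref{t3} differs from what you wrote are worth flagging. First, the paper never uses the two-sided identity $T(r,F)=(n+1)T(r,f)+S(r,f)$; instead it relies on the one-sided Lemma \ref{lem3.10a}, $mT(r,f)\le T(r,\Phi)-N(r,0;L(z,f))+S(r,f)$, together with Lemma \ref{lem3.11}, so that the $N(r,0;L(z,f))$ term is carried through and cancelled against the $q$-shift contribution in $N_{k+2}(r,0;\Phi)$. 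Second, the paper's Theorem \ref{t3} drops the deficiency parameter $\lambda=\min\{\Theta(0;f),\Theta(0;g)\}$ altogether; the mechanism you sketch for how $\lambda$ enters the thresholds is specific to the original Xu--Liu--Cao argument and has no analogue in the present paper.
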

\begin{ques}\label{q1.1}
	Can we unify Theorems A--E into a single result that improves, extends, and generalizes all the results to a large extent?
\end{ques}
\begin{ques}\label{q1.2}
	Can we further generalize Theroem H by replacing  $(f(z)^m-a)$ ($(g(z)^m-a)$) in place of $P(f)$ ($P(g)$) and $L(z,f)$ ($L(z,g)$) in place of $f(qz+c)$ ($g(qz+c)$) under relax sharing hypothesis?
\end{ques}
\begin{ques}\label{q1.3}
	What can be said about uniqueness if we replace the difference polynomial $P(f)f(qz+c)$ with more general settings differential-difference polynomial $(P(z)L(z,f))^{k}$ in Theorems I and J?
\end{ques}
The purpose of the paper is to find out all the possible answers to the above questions. Section 2 includes our main results.
\section{\textbf{Main Results}}
\begin{theo}\label{t1}
	Let $f$ be a transcendental meromorphic (resp., entire) function of zero order, and $k$ be a non-negative integer. If $m> \Gamma_{1}+km_2+2s+(1+s)\chi_{_{b_{0}}}+2$ (resp., $n> \Gamma_{1}+km_2$), then $(P(f)L(z,f)^s)^{(k)}-\alpha(z)$ has infinitely many zeros, where $\alpha(z)\in S(f)-\{0\}$.
\end{theo}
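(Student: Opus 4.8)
The plan is to argue by contradiction. Set $F=P(f)L(z,f)^{s}$ and suppose, against the conclusion, that $F^{(k)}-\alpha$ has only finitely many zeros, so that $\ol{N}\bigl(r,\alpha;F^{(k)}\bigr)=S(r,f)$. Everything is then arranged so as to force $T(r,f)=S(r,f)$, which is absurd for a transcendental $f$.

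The first task is to build the growth dictionary supplied by zero-order $q$-difference theory. By the Valiron--Mohon'ko theorem $T(r,P(f))=mT(r,f)+S(r,f)$, while the zero-order invariance $T(r,f(qz+c))=T(r,f)+S(r,f)$ and the $q$-difference analogue of the logarithmic derivative lemma of Barnett, Halburd, Korhonen and Morgan give $m\bigl(r,L(z,f)/f\bigr)=S(r,f)$ and hence $T(r,L(z,f))\le(1+\chi_{_{b_0}})T(r,f)+S(r,f)$, the symbol $\chi_{_{b_0}}$ recording precisely whether the term $b_{0}f(z)$ adds a second batch of poles. Rewriting $P(f)=F/L(z,f)^{s}$ and feeding in these facts yields the lower estimate
\be
T(r,F)\ge\bigl[m-s(1+\chi_{_{b_0}})\bigr]T(r,f)+S(r,f).
\ee

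The engine is the Milloux--Hayman fundamental inequality: for $k\ge1$,
\be
T(r,F)\le\ol{N}(r,\infty;F)+N_{k+1}(r,0;F)+\ol{N}\bigl(r,\alpha;F^{(k)}\bigr)+S(r,F),
\ee
whose last term is $S(r,f)$ by the standing hypothesis. The pole term is bounded by $\ol{N}(r,\infty;F)\le\ol{N}(r,\infty;f)+\ol{N}(r,\infty;f(qz+c))+S(r,f)\le2T(r,f)+S(r,f)$, since the only poles of $F$ sit over poles of $f$ (through $P(f)$, and through $b_{0}f(z)$ when $b_0\ne0$) and over poles of $f(qz+c)$ (through $L$). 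For the zeros I split $N_{k+1}(r,0;F)\le N_{k+1}(r,0;P(f))+N(r,0;L(z,f)^{s})$. In the first summand I isolate the $m_{1}$ simple roots of $P$, each contributing at most $T(r,f)+S(r,f)$, from the $m_{2}$ multiple roots, each contributing at most $(k+1)\ol{N}(r,\cdot\,;f)\le(k+1)T(r,f)+S(r,f)$; this is exactly how the coefficient $\Gamma_{1}+km_{2}=m_{1}+(k+1)m_{2}$ arises. The second summand exploits that every zero of $L^{s}$ has multiplicity at least $s$, so $N(r,0;L^{s})=sN(r,0;L)\le s(1+\chi_{_{b_0}})T(r,f)+S(r,f)$.

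Assembling the three displays gives $m-s(1+\chi_{_{b_0}})\le\Gamma_{1}+km_{2}+2+s(1+\chi_{_{b_0}})$, i.e. $m\le\Gamma_{1}+km_{2}+2+2s(1+\chi_{_{b_0}})$ modulo $S(r,f)$, which already reproduces the stated threshold in the case $b_{0}=0$ and contradicts the hypothesis there. The main obstacle is to tighten this bookkeeping to the sharper form $2s+(1+s)\chi_{_{b_0}}+2$ demanded when $b_0\ne0$, and likewise to obtain the entire-function threshold $\Gamma_{1}+km_{2}$: both require a sharper growth estimate for $T(r,F)$ (essentially $T(r,F)\ge(m+s)T(r,f)+S(r,f)$ in the entire case, where all pole contributions drop out and $T(r,L)=T(r,f)+S(r,f)$) together with a more careful accounting of how the shift factor $L^{s}$ inflates poles and zeros in the two regimes $b_0=0$ and $b_0\ne0$. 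It is this balance of shift-operator contributions, rather than the routine polynomial part, that produces the delicate constants in the hypothesis, and securing the lower bound for $T(r,F)$ with only an $S(r,f)$ error --- the proximities of $f/L(z,f)$ and of $f$ near the roots of $P$ are not automatically negligible --- is the step I expect to demand the most care.
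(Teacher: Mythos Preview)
Your overall architecture --- contradiction, a lower bound for $T(r,F)$ in terms of $T(r,f)$, then the second main theorem applied to $F^{(k)}$ combined with Zhang's inequality $N_{1}(r,0;F^{(k)})\le T(r,F^{(k)})-T(r,F)+N_{k+1}(r,0;F)+S(r,F)$ --- matches the paper exactly. The gap you yourself flag is real, but the fix is not the one you guess. In the entire case the paper does \emph{not} prove $T(r,F)\ge(m+s)T(r,f)+S(r,f)$; instead it proves the sharper inequality
\[
mT(r,f)\;\le\;T(r,F)\;-\;N\bigl(r,0;L(z,f)^{s}\bigr)\;+\;S(r,f),
\]
obtained by writing $f^{s}P(f)=F\cdot f^{s}/L^{s}$, passing to proximity functions, and then controlling $m(r,f^{s}/L^{s})$ via the first fundamental theorem for $L^{s}/f^{s}$ together with the identity $N(r,\infty;f/g)-N(r,\infty;g/f)=N(r,\infty;f)+N(r,0;g)-N(r,\infty;g)-N(r,0;f)$. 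The point is that the $-N(r,0;L^{s})$ produced here cancels exactly the $+N(r,0;L^{s})$ that you split off from $N_{k+1}(r,0;F)$, and this cancellation is what kills all $s$-dependence in the entire threshold. Your proposed lower bound $(m+s)T(r,f)$ would not achieve this cancellation and is in any case not available without extra hypotheses on the zeros of $L$.

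In the meromorphic case your argument reaches the stated constant only when $b_{0}=0$; when $b_{0}\ne0$ your lower bound $T(r,F)\ge[m-s(1+\chi_{b_0})]T(r,f)$ loses an extra $s$ compared with the paper's $(m-s)T(r,f)\le T(r,F)$. The paper gets the sharper form by bounding $T(r,L^{s}/f^{s})=s\,m(r,L/f)+s\,N(r,\infty;L/f)$ and observing that $L/f=b_{1}f(qz+c)/f+b_{0}$, so its poles lie only over zeros of $f$ and poles of $f(qz+c)$, giving $N(r,\infty;L/f)\le 2T(r,f)+S(r,f)$ \emph{independently} of $b_{0}$. One further point: since $\alpha$ is a small function rather than a constant, the second main theorem you need is Yamanoi's version for small functions, which the paper invokes explicitly.
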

%\begin{theo}\label{t1}
	%Let $f$ be a transcendental meromorphic (resp., entire) function of zero order, and $k$ be a non-negative integer. If $n\geq 4+k+2s+(1+s)\chi_{_{b_{0}}}$ (resp., $n\geq k+2$), then $(f^nP(f)L(z,f)^s)^{(k)}-\alpha(z)$ has infinitely many zeros, where $\alpha(z)\in S(f)-\{0\}$.
%\end{theo}
\begin{theo}\label{t2}
	Let $f(z)$ and $g(z)$ be two transcendental entire functions of zero order and $n$ be a positive integer such that $n\geq m+5$. Let $f^nP(f)L(z,f)-p(z)$ and $g^nP(g)L(z,g)-p(z)$ share $(0,2)$, where $p(z)$ be a
	nonzero polynomial such that $\text{deg}(p)< (n-1)/2$ and $g(z), g(qz+c)$ share $0$ CM. Then one of the following conclusions can be realized:
	\begin{enumerate}
		\item[(i).] $ f\equiv tg $, where $ t $ is a constant satisfying $ t^d $, where $ d=GCD(n+m+1, n+m, \cdots, n+1) $ and $ a_{q-j}\neq 0 $ for some $ j=0, 1,\cdots, m. $
		\item[(ii).]  $ f $ and $ g $ satisfy the algebraic equation $ \mathcal{A}(x,y)=0 $, where \beas \mathcal{A}(w_1,w_2)&=& w_1^n\left(a_mw_1^m+\cdots+a_1w_1+a_0\right)L(z,w_1)\\&-& w_2^n\left(a_mw_2^m+\cdots+a_1w_2+a_0\right)L(z,w_2). \eeas
		\end{enumerate}
\end{theo}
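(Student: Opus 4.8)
The plan is to convert the sharing hypothesis into a weighted value-sharing problem for the normalized functions and then run the standard Nevanlinna machinery. Set
$$\mathcal{F}=\frac{f^nP(f)L(z,f)}{p},\qquad \mathcal{G}=\frac{g^nP(g)L(z,g)}{p}.$$
Since $p$ is a polynomial and $f,g$ are entire of order zero, $T(r,p)=S(r,f)=S(r,g)$, the functions $f^nP(f)L(z,f)$ and $g^nP(g)L(z,g)$ are entire, and the hypothesis that $f^nP(f)L(z,f)-p$ and $g^nP(g)L(z,g)-p$ share $(0,2)$ is exactly the statement that $\mathcal{F}$ and $\mathcal{G}$ share $(1,2)$. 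First I would record the characteristic estimate for $\mathcal F$: using the zero-order $q$-shift facts $T(r,f(qz+c))=T(r,f)+S(r,f)$ and $m(r,f(qz+c)/f)=S(r,f)$ (the $q$-difference logarithmic derivative lemma of Barnett--Halburd--Korhonen--Morgan) together with the Valiron--Mohon'ko theorem applied to the polynomial $f^nP(f)$ of degree $n+m$, one gets $T(r,L(z,f))=T(r,f)+S(r,f)$ and hence
$$(n+m-1)\,T(r,f)+S(r,f)\le T(r,\mathcal F)\le (n+m+1)\,T(r,f)+S(r,f),$$
with the analogous bounds for $\mathcal G$. Because $f,g$ are entire, $\overline{N}(r,\infty;\mathcal F)$ and $\overline{N}(r,\infty;\mathcal G)$ reduce to the finitely many zeros of $p$, hence are $S(r,f)+S(r,g)$.

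Next I would estimate the truncated zero-counting functions. Splitting the zeros of $f^nP(f)L(z,f)$ into those coming from $f^n$, from $P(f)$, and from $L(z,f)$, one obtains
$$N_2\!\left(r,0;\mathcal F\right)\le 2\overline{N}(r,0;f)+N_2\!\left(r,0;P(f)\right)+N_2\!\left(r,0;L(z,f)\right)+S(r,f),$$
and here is where the hypothesis that $g(z)$ and $g(qz+c)$ share $0$ CM enters: it forces the zeros of $L(z,g)=b_1g(qz+c)+b_0g(z)$ to cluster at common zeros, so that $N_2(r,0;L(z,g))$ is bounded by essentially $\overline{N}(r,0;g)+S(r,g)$ rather than the generic $2T(r,g)$. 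Introduce the auxiliary function
$$H=\left(\frac{\mathcal F''}{\mathcal F'}-\frac{2\mathcal F'}{\mathcal F-1}\right)-\left(\frac{\mathcal G''}{\mathcal G'}-\frac{2\mathcal G'}{\mathcal G-1}\right),$$
and distinguish two cases. If $H\not\equiv0$, the standard weighted-sharing second main theorem for functions sharing $(1,2)$ gives
$$T(r,\mathcal F)+T(r,\mathcal G)\le N_2(r,0;\mathcal F)+N_2(r,0;\mathcal G)+N_2(r,\infty;\mathcal F)+N_2(r,\infty;\mathcal G)+S(r,f)+S(r,g).$$
Feeding in the lower bound $(n+m-1)[T(r,f)+T(r,g)]$ on the left and the zero-counting estimates on the right, the pole terms being $S$, the inequality collapses to $(n+m-1)[T(r,f)+T(r,g)]\le C\,[T(r,f)+T(r,g)]+S(r,f)+S(r,g)$ with $C\le m+4$, contradicting $n\ge m+5$. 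Hence $H\equiv0$.

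From $H\equiv0$ I would integrate twice to obtain the Möbius relation $\frac{1}{\mathcal F-1}=\frac{b\mathcal G+(a-b)}{\mathcal G-1}$ with constants $a\neq0,b$, equivalently $\mathcal F=\frac{(b+1)\mathcal G+(a-b-1)}{b\mathcal G+(a-b)}$. A subcase analysis on $(a,b)$, comparing the counting data of $\mathcal F$ and $\mathcal G$ against the lower bound $(n+m-1)T(r,f)$, eliminates every genuinely bilinear transformation and leaves only $\mathcal F\equiv\mathcal G$ or $\mathcal F\,\mathcal G\equiv1$. The possibility $\mathcal F\,\mathcal G\equiv1$ means $f^nP(f)L(z,f)\cdot g^nP(g)L(z,g)\equiv p^2$; but a transcendental entire function of order zero has infinitely many zeros, and at any zero of $f$ of multiplicity $\mu$ the left side vanishes to order at least $n\mu\ge n$, whereas $\deg p^2=2\deg p<n-1<n$, which is impossible. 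Thus $\mathcal F\equiv\mathcal G$, that is, $f^nP(f)L(z,f)\equiv g^nP(g)L(z,g)$. Finally, putting $h=f/g$: if $h$ is constant, say $h\equiv t$, then by the linearity $L(z,tg)=tL(z,g)$ and comparison of the coefficients of the terms $g^{\,n+i}L(z,g)$ one gets $t^{\,n+i+1}=1$ for every $i$ with $a_i\neq0$, whence $t^d=1$ with $d=\mathrm{GCD}(n+m+1,n+m,\dots,n+1)$, giving conclusion (i); if $h$ is nonconstant, the identity is precisely $\mathcal A(f,g)=0$, giving conclusion (ii). I expect the main obstacle to be the counting-function bookkeeping in the $H\not\equiv0$ case: pushing the constant $C$ down to $\le m+4$ requires using the weight-$2$ sharing to discard the $\overline N_*$ terms and, crucially, using the CM-sharing of $g$ with its $q$-shift to control $N_2(r,0;L(z,g))$, so that the threshold comes out exactly at $n\ge m+5$; a secondary delicate point is the exhaustive subcase analysis ruling out the nontrivial Möbius transformations when $H\equiv0$.
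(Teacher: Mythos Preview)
Your global architecture is exactly the paper's: normalize to $\mathcal F,\mathcal G$ sharing $(1,2)$, split on $\mathcal H\equiv0$ versus $\mathcal H\not\equiv0$, kill the M\"obius branches, and finish with $h=f/g$. The $\mathcal F\mathcal G\equiv1$ elimination and the final dichotomy are fine. The difficulty is precisely where you flagged it: your sketch of the $\mathcal H\not\equiv0$ case does not, as written, reach the threshold $n\ge m+5$, for three related reasons.

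First, the inequality you quote as the ``standard weighted-sharing second main theorem'' with $T(r,\mathcal F)+T(r,\mathcal G)$ on the left and a single copy of the $N_2$'s on the right is not standard; the usual lemma bounds each of $T(r,\mathcal F)$ and $T(r,\mathcal G)$ separately by the same right-hand side, so summing costs a factor of~$2$. The paper instead runs the Second Fundamental Theorem on $\mathcal F$ and on $\mathcal G$ separately and combines asymmetrically.

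Second, the CM sharing of $g$ with $g(qz+c)$ is \emph{not} used to bound $N_2(r,0;L(z,g))$ directly (that bound fails: $L(z,g)$ can vanish where $g\neq0$). The paper's device is to rewrite $g^nL(z,g)=g^{n+1}\cdot\bigl(L(z,g)/g\bigr)$; the CM hypothesis makes $L(z,g)/g$ entire with $N(r,\infty;L(z,g)/g)=0$, and then $T(r,L(z,g)/g)=m(r,L(z,g)/g)=S(r,g)$ by the $q$-shift logarithmic-derivative lemma. This yields $N_2(r,0;g^nL(z,g))\le 2\overline N(r,0;g)+S(r,g)$ and also the sharp lower bound $(n+m+1)T(r,g)\le T(r,\mathcal G)+S(r,g)$.

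Third, for the $f$-side the paper does not use your crude bound $(n+m-1)T(r,f)\le T(r,\mathcal F)$ but rather Lemma~3.10,
\[
(n+m)T(r,f)\le T(r,\mathcal F)-N(r,0;L(z,f))+S(r,f),
\]
and the subtracted $N(r,0;L(z,f))$ exactly cancels the contribution of $L(z,f)$ inside $N_2(r,0;\mathcal F)$. Without this cancellation you lose at least one unit and land at $n\ge m+6$ or worse. With it, the paper obtains $(n-m-5)T(r,f)+(n-m-3)T(r,g)\le S(r,f)+S(r,g)$, which is the sharp contradiction.
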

\begin{theo}\label{t3}
	Let $f$, $g$ be two transcendental entire functions of zero order. If $E_{l}(1; (P(f)L(z,f))^{(k)})=E_{l}(1; (P(g)L(z,g))^{(k)})$ and $l,m,n$ are integers satisfying one of the following conditions:
	\begin{enumerate}
		\item[(i).] $ l\geq 2 $, $m>2\Gamma_{0}+2km_2+1$
		\item[(ii).]  $ l=1 $, $m>\frac{1}{2}(\Gamma_{1}+4\Gamma_{0}+5km_2+3)$
		\item[(iii).]  $ l=0 $, $m>(3\Gamma_{1}+2\Gamma_{0}+5km_2+4)$,
	\end{enumerate}
then one of the following results holds:
\begin{enumerate}
	\item[(i).] $ f\equiv tg $ for a constant $t$ such that $t^{d}=1$, where $d=\text{GCD}(\lambda_0, \lambda_1,\ldots, \lambda_m)$ 
	\item[(ii).]  $f$ and $g$ satisfy the algebraic equation $\mathcal{A}(w_1,w_2)=0$, where \beas \mathcal{A}(w_1,w_2)=P(w_1)L(z,w_1)-P(w_2)L(z,w_2).\eeas
\end{enumerate}\end{theo}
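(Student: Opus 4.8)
The plan is to put $\mathcal F=P(f)L(z,f)$, $\mathcal G=P(g)L(z,g)$ and $F=\mathcal F^{(k)}$, $G=\mathcal G^{(k)}$, so that by hypothesis $F$ and $G$ share $(1,l)$. First I would establish the basic growth relation $T(r,\mathcal F)=(m+1)T(r,f)+S(r,f)$: the valence (Mohon'ko-type) estimate gives $T(r,P(f))=mT(r,f)+S(r,f)$, while the $q$-shift form of the logarithmic-derivative lemma (valid since $f$ has order zero) yields $T(r,L(z,f))=T(r,f)+S(r,f)$ together with $N(r,\infty;L(z,f))=N(r,\infty;f)+S(r,f)$. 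Because $f$ is entire, $\mathcal F$ and hence $F=\mathcal F^{(k)}$ are entire, so all pole terms drop out and $T(r,F)\le(m+1)T(r,f)+S(r,f)$, with a matching lower bound supplied by the second fundamental theorem. This is what converts every numerical threshold into a statement about $m$.

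Next I would assemble the counting-function inequalities that produce the stated thresholds. The zeros of $\mathcal F$ split into zeros of $P(f)$, controlled by the $m_1$ simple and $m_2$ multiple roots of $P$ (this is where $\Gamma_0=m_1+2m_2$ and $\Gamma_1=m_1+m_2$ enter), and zeros of $L(z,f)$, a quantity of Nevanlinna degree one in $f$. Passing to the $k$-th derivative is handled by the Lahiri--Sarkar estimate $\overline N(r,0;\mathcal F^{(k)})\le N_{k+1}(r,0;\mathcal F)+k\overline N(r,\infty;\mathcal F)+S(r,f)$; since $f$ is entire the pole term vanishes and the multiple roots of $P$ contribute the extra $km_2$ appearing in each threshold.

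The heart of the argument is the auxiliary function
\[
H=\Big(\frac{F''}{F'}-\frac{2F'}{F-1}\Big)-\Big(\frac{G''}{G'}-\frac{2G'}{G-1}\Big).
\]
Following Lahiri's weighted-sharing method, I would show that if $H\not\equiv0$ then $N(r,\infty;H)$ is controlled by the reduced counting functions of the zeros of $F,G$ together with $\overline N_*(r,1;F,G)$, the estimate degrading by a controlled amount as the weight $l$ drops through $2,1,0$. Feeding the counting estimates of the previous step into the second fundamental theorem for $F$ and $G$ then produces a bound of the form $(\text{const})\,T(r,f)\le(\text{combination of }\Gamma_0,\Gamma_1,km_2)\,T(r,f)+S(r,f)$, whose three incarnations are contradicted precisely by $(i)$ $m>2\Gamma_0+2km_2+1$, $(ii)$ $m>\tfrac12(\Gamma_1+4\Gamma_0+5km_2+3)$, and $(iii)$ $m>3\Gamma_1+2\Gamma_0+5km_2+4$. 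Hence $H\equiv0$.

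Integrating $H\equiv0$ twice yields the bilinear relation $\frac{1}{F-1}=\frac{A}{G-1}+B$ with constants $A(\neq0),B$. I would then run the standard case analysis on $(A,B)$: for entire functions of order zero the subcases with $B\neq0$, and with $B=0$ but $A\neq1$, force one of $F,G$ to omit a value to an extent incompatible with the growth relation established above, and are discarded. The surviving case is $F\equiv G$, whence $\mathcal F^{(k)}\equiv\mathcal G^{(k)}$; integrating $k$ times and comparing the value distributions of the two transcendental zero-order sides forces the integration polynomial to vanish, so $\mathcal F\equiv\mathcal G$, i.e. $\mathcal A(f,g)=0$, which is conclusion $(ii)$. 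When this algebraic identity degenerates to $f\equiv tg$, substituting back and matching the coefficient of each surviving power $a_if^iL(z,f)$ forces $t^{i+1}=1$ whenever $a_i\neq0$, hence $t^d=1$ with $d=\mathrm{GCD}(\lambda_0,\ldots,\lambda_m)$, which is conclusion $(i)$. The main obstacle I expect is the derivative-versus-shift bookkeeping in the counting estimates—tracking how $k$-fold differentiation interacts with the zeros of $L(z,f)$ and the multiple roots of $P$ tightly enough to land on the exact thresholds—together with cleanly excluding the non-trivial Möbius subcases and the integration polynomial for entire (rather than general meromorphic) functions.
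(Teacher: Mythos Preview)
Your plan matches the paper's proof in structure: the same auxiliary function $H$, the same three weight-dependent contradiction estimates when $H\not\equiv0$, and the same M\"obius case analysis (including the Picard-type exclusion of $FG\equiv1$ via the roots of $P$ being exceptional values of a zero-order entire function) when $H\equiv0$, ending with $P(f)L(z,f)\equiv P(g)L(z,g)$ and the $t^d=1$ extraction.

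The one place your outline diverges technically from the paper is the growth relation. You propose to establish the \emph{equality} $T(r,\mathcal F)=(m+1)T(r,f)+S(r,f)$, but the paper neither proves nor uses this; in fact the lower bound can fail since $L(z,f)=b_1f(qz+c)+b_0f(z)$ may suffer cancellation. What the paper actually uses (its Lemma~\ref{lem3.10a}) is the sharper one-sided inequality
\[
mT(r,f)\le T\bigl(r,P(f)L(z,f)\bigr)-N\bigl(r,0;L(z,f)\bigr)+S(r,f),
\]
obtained via $m\bigl(r,f/L(z,f)\bigr)$ and the $q$-shift logarithmic-derivative lemma. The subtracted term $-N(r,0;L(z,f))$ is precisely what cancels the $L(z,f)$-zero contribution inside $N_{k+2}(r,0;P(f)L(z,f))$ after applying the Zhang-type estimate and the second fundamental theorem; only $N(r,0;L(z,g))\le T(r,g)+S(r,g)$ survives, which is the source of the ``$+1$'' in each threshold. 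If you run the argument from the bare equality you wrote, the $L(z,f)$-zeros cost an extra $T(r,f)$ and you overshoot the stated bounds. This is exactly the ``derivative-versus-shift bookkeeping'' you flagged as the main obstacle, so you have correctly identified where the work is; just replace your equality by the inequality above and the thresholds fall out as in the paper.
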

\begin{rem}\label{r1.1a}
	Let $P(z)=z^{n}$, where $n$ is a positive integer. Suppose $f(z)=e^{-z}$, $q=-n$, $c\in\mathbb{C}$ and $\alpha(z)=e^{-c}$. Then $P(f(z))[f(qz+c)-f(z)]-\alpha(z)=-e^{-(n+1)z}$ have no zeros. This shows that zero order growth restriction in Theorem \ref{t1} can not be extended to finite order.
\end{rem}
\begin{rem}\label{r2.2}
		\begin{enumerate}
			\item[(i).] Theorem \ref{t1} is a more general result than Theorems A--E.
			\item[(ii).]  Theorems \ref{t2} and \ref{t3} answer the answer of questions \ref{q1.2} and \ref{q1.3}, respectively.\end{enumerate}
\end{rem}

%\begin{defi}\cite {12} For $a\in\mathbb{C}\cup\{\infty\}$ and a positive integer $p$ we denote by $N_{p}(r,a;f)$ the sum $\ol N(r,a;f)+\ol N(r,a;f\mid\geq 2)+\ldots+\ol N(r,a;f\mid\geq p)$. Clearly $N_{1}(r,a;f)=\ol N(r,a;f)$.\end{defi}
\section{\textbf{Some Lemmas}}
We now prove several lemmas which will play key roles in proving the main results of the paper. Let $\mathcal{F}$ and $\mathcal{G}$ be two non-constant meromorphic functions. Henceforth we shall denote by $\mathcal{H}$ the following function \be\label{e3.1}\mathcal{H}=\left(\frac{\;\;\mathcal{F}^{\prime\prime'}}{\mathcal{F}^{\prime}}-\frac{2\mathcal{F}^{\prime}}{\mathcal{F}-1}\right)-\left(\frac{\;\;\mathcal{G}^{\prime\prime}}{\mathcal{G}^{\prime}}-\frac{2\mathcal{G}^{\prime}}{\mathcal{G}-1}\right).\ee
\begin{lem}\label{lem3.1}\cite{Liu & Qi & 2011}
	Let $f$ be a zero-order meromorphic function, and $c, q(\neq 0)\in\mathbb{C}$. Then \beas m\left(r,\frac{f(qz+c)}{f(z)}.\right)=S(r,f).\eeas 
	\end{lem}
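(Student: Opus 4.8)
\emph{Proof proposal.} The plan is to reduce the combined $q$-shift $z\mapsto qz+c$ to a pure $q$-dilation $z\mapsto qz$ followed by an ordinary shift, and then to invoke the two standard logarithmic-derivative-type estimates separately. Concretely, I would set $g(z)=f(qz)$; since $qz+c=q(z+c/q)$ we have $f(qz+c)=g(z+c/q)$, and hence
\[
\frac{f(qz+c)}{f(z)}=\frac{g(z+c/q)}{g(z)}\cdot\frac{f(qz)}{f(z)}.
\]
Taking proximity functions and using $m(r,AB)\le m(r,A)+m(r,B)+O(1)$, it then suffices to show that each of the two factors on the right contributes only an $S(r,f)$ term.

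For the dilation factor $f(qz)/f(z)$ I would appeal directly to the $q$-difference analogue of the Lemma on the Logarithmic Derivative due to Barnett, Halburd, Korhonen and Morgan \cite{Barnett & Halburd & 2007}: for a meromorphic function of zero order and any $q\in\mathbb{C}\setminus\{0\}$ one has $m\!\left(r,f(qz)/f(z)\right)=S(r,f)$. This disposes of the second factor at once.

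For the shift factor I would first note that $g(z)=f(qz)$ is again of zero order, because dilation by a nonzero constant leaves the growth order unchanged. Applying the difference analogue of the logarithmic derivative lemma (valid for functions of order strictly less than one, and hence in particular for zero order) to $g$ with shift $c/q$ yields $m\!\left(r,g(z+c/q)/g(z)\right)=S(r,g)$. It remains only to identify the two error classes, i.e. to verify that $S(r,g)=S(r,f)$. Since $T(r,g)=T(r,f(qz))$ is comparable to $T(|q|r,f)$, and since for a function of order zero one has $T(\lambda r,f)=(1+o(1))\,T(r,f)$ outside a possible exceptional set, it follows that $T(r,g)=(1+o(1))\,T(r,f)$ and therefore the two classes coincide.

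The principal technical point — and the place where the zero-order hypothesis genuinely cannot be relaxed — is precisely this last comparison of characteristic functions under dilation, together with the bookkeeping of the associated exceptional sets. Remark \ref{r1.1a} already shows that the conclusion fails for functions of positive finite order, so no purely formal argument can avoid the growth restriction here. Everything else reduces to a routine manipulation of proximity functions once the two cited logarithmic-derivative lemmas are available.
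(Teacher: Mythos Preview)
The paper does not supply its own proof of this lemma; it is quoted verbatim from \cite{Liu & Qi & 2011} and used as a black box. There is therefore nothing in the present paper to compare your argument against.

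That said, your route is the standard one and is correct: factor $f(qz+c)/f(z)$ as a pure shift of $g(z)=f(qz)$ times the pure dilation $f(qz)/f(z)$, and apply the Halburd--Korhonen difference lemma and the Barnett--Halburd--Korhonen--Morgan $q$-difference lemma to the two factors. The only step you have left a little informal is the identification $S(r,g)=S(r,f)$ via ``$T(\lambda r,f)=(1+o(1))T(r,f)$ for zero-order $f$''. This is not an immediate consequence of the definition of order, but it is exactly the first assertion of Lemma~\ref{lem2.2} (the Zhang--Korhonen estimate $T(r,f(qz+c))=T(r,f)+S(r,f)$ with $c=0$), so you should cite that rather than argue heuristically. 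Your reference to Remark~\ref{r1.1a} is slightly off target: that remark addresses the sharpness of the zero-order hypothesis in Theorem~\ref{t1}, not in the present lemma.
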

\begin{lem}\label{lem2.2}\cite{Xu & Liu & Cao & 2015}
	Let $f$ be a zero-order meromorphic function, and $c, q\in\mathbb{C}$. Then
	\beas T(r,f(qz+c))=T(r,f)+S(r,f),\eeas  \beas N(r,\infty;f(qz+c))=N(r,\infty;f(z))+S(r,f),\eeas  \beas N(r,0;f(qz+c))=N(r,0;f(z))+S(r,f)\eeas 
	\beas \ol N(r,\infty;f(qz+c))=\ol N(r,\infty;f(z))+S(r,f),\eeas  \beas \ol N(r,0;f(qz+c))=\ol N(r,0;f(z))+S(r,f)\eeas on a set of logarithmic density $1$.
\end{lem}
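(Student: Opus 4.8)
The plan is to reduce the five identities to two independent mechanisms: Lemma~\ref{lem3.1}, which governs the proximity function, and the elementary geometry of the affine substitution $z\mapsto qz+c$, which governs the counting functions, with the zero-order hypothesis used to absorb the induced dilation $r\mapsto|q|r$ into an $S(r,f)$ term. Throughout set $g(z)=f(qz+c)$ (assuming, as in Lemma~\ref{lem3.1}, that $q\neq0$).

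I would treat the counting functions first. A point $z_0$ is a pole of $g$ of multiplicity $p$ precisely when $qz_0+c$ is a pole of $f$ of multiplicity $p$; since $z\mapsto qz+c$ maps $\{|z|\le r\}$ onto $\{|w-c|\le|q|r\}$, the inclusions $\{|w|\le|q|r-|c|\}\subseteq\{|w-c|\le|q|r\}\subseteq\{|w|\le|q|r+|c|\}$ yield
\beas n(|q|r-|c|,\infty;f)\le n(r,\infty;g)\le n(|q|r+|c|,\infty;f),\eeas
together with the identical sandwich for the reduced counts $\ol n$, multiplicities being preserved. Integrating against $dt/t$ relates $N(r,\infty;g)$ and $\ol N(r,\infty;g)$ to $N(|q|r,\infty;f)$ and $\ol N(|q|r,\infty;f)$ up to a term $O(\log r)$. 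The decisive input is the zero-order scaling estimate: because $f$ has order zero, $\log N(r,f)=o(\log r)$, and a standard Borel/logarithmic-density argument (this is exactly the origin of the ``logarithmic density $1$'' clause) shows that for any fixed $\lambda>0$ one has $N(\lambda r,\infty;f)=N(r,\infty;f)+S(r,f)$ off a set of logarithmic density $0$. Hence $N(r,\infty;g)=N(r,\infty;f)+S(r,f)$ and $\ol N(r,\infty;g)=\ol N(r,\infty;f)+S(r,f)$; applying the same argument to the zeros of $g$ (equivalently, to the poles of $1/f$) gives the corresponding identities for $N(r,0;\cdot)$ and $\ol N(r,0;\cdot)$.

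For the proximity function, $g=f\cdot(g/f)$ and Lemma~\ref{lem3.1} give
\beas m(r,g)\le m(r,f)+m\left(r,\frac{f(qz+c)}{f(z)}\right)=m(r,f)+S(r,f).\eeas
For the reverse bound, note that $f$ is itself a $q$-shift of $g$, namely $f(z)=g\big((z-c)/q\big)$, so Lemma~\ref{lem3.1} applied to $g$ with parameters $1/q,-c/q$ gives $m(r,f/g)=S(r,g)$, whence $m(r,f)\le m(r,g)+S(r,g)$. Combining the one-sided proximity bound with the pole count already yields $T(r,g)\le T(r,f)+S(r,f)$, so $T(r,g)=O(T(r,f))$ and $S(r,g)\subseteq S(r,f)$; only then do I invoke the reverse proximity bound to conclude $m(r,g)=m(r,f)+S(r,f)$, and adding this to the pole relation gives $T(r,g)=T(r,f)+S(r,f)$.

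The main obstacle is not any individual inequality but the management of the exceptional sets. Each dilation estimate $N(\lambda r,f)=N(r,f)+S(r,f)$ is valid only outside a set of logarithmic density $0$, and one must verify that the finitely many such estimates (for poles, zeros, their reduced versions, and the error set produced by Lemma~\ref{lem3.1}) can be merged so that all five identities hold simultaneously on one common set of logarithmic density $1$. Breaking the apparent circularity between $S(r,f)$ and $S(r,g)$ --- by first securing $T(r,g)=O(T(r,f))$ from the one-sided estimates, as above --- is the other point that requires care; the whole scheme can alternatively be short-circuited by citing the zero-order characteristic scaling theorem of Barnett, Halburd, Korhonen and Morgan \cite{Barnett & Halburd & 2007}, from which all five relations follow formally.
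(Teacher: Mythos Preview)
The paper does not prove this lemma: it is quoted verbatim from \cite{Xu & Liu & Cao & 2015} (and ultimately rests on \cite{Zhang & Korhonen & 2010} and \cite{Barnett & Halburd & 2007}) with no argument given. So there is nothing in the paper to compare against; your proposal is supplying a proof the author deliberately omitted.

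That said, your sketch is correct and is exactly the standard argument behind the cited result. The geometric sandwich for $n(r,\cdot)$ under $z\mapsto qz+c$, the integration to $N$, the zero-order dilation estimate $N(\lambda r,\cdot)=N(r,\cdot)+S(r,f)$ on a set of logarithmic density $1$, and the two-sided use of Lemma~\ref{lem3.1} for the proximity function are precisely the ingredients in \cite{Zhang & Korhonen & 2010} (for $c=0$) and its $q$-shift extension. Your handling of the potential $S(r,f)$/$S(r,g)$ circularity---first securing the one-sided bound $T(r,g)\le T(r,f)+S(r,f)$ so that $g$ inherits zero order before invoking Lemma~\ref{lem3.1} in the reverse direction---is the right way to close the loop. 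One cosmetic point: when you write ``$\log N(r,f)=o(\log r)$'' you are using $N\le T$ and the zero-order hypothesis on $T$; it would be cleaner to state the dilation estimate directly for $T$ (which is what \cite{Barnett & Halburd & 2007} actually proves) and then specialize to $N$ and $\ol N$ via $N\le T$.
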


\begin{lem}\label{lem3.3}\cite{7a} If $N\left(r,0;f^{(k)}\mid f\not=0\right)$ denotes the counting function of those zeros of  $f^{(k)}$ which are not the zeros of $f$, where a zero of $f^{(k)}$ is counted according to its multiplicity then $$N\left(r,0;f^{(k)}\mid f\not=0\right)\leq k\ol N(r,\infty;f)+N\left(r,0;f\mid <k\right)+k\ol N\left(r,0f\mid\geq k\right)+S(r,f).$$\end{lem}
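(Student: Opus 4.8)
The plan is to introduce the auxiliary function $F=f^{(k)}/f$ and to translate the count of zeros of $f^{(k)}$ lying off the zeros of $f$ into a statement about the zeros and poles of $F$; the estimate is then closed by the first fundamental theorem together with the lemma on the logarithmic derivative.

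First I would record that, by the classical lemma on the logarithmic derivative, $m(r,f^{(k)}/f)=S(r,f)$. The key observation is that if $z_0$ is a zero of $f^{(k)}$ of multiplicity $\ell$ at which $f(z_0)\neq0$, then, $f$ being finite and nonvanishing there, $F=f^{(k)}/f$ has a zero of exactly the same multiplicity $\ell$ at $z_0$. Hence every point counted by $N(r,0;f^{(k)}\mid f\neq0)$ is a zero of $F$ counted with the same multiplicity; since $F$ can only have further zeros (occurring at zeros of $f$ of multiplicity $<k$ where $f^{(k)}$ happens to vanish to higher order), this yields $N(r,0;f^{(k)}\mid f\neq0)\leq N(r,0;F)$.

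Next I would apply the first fundamental theorem to $F$, writing $N(r,0;F)=N(r,\infty;1/F)\leq T(r,1/F)+O(1)=T(r,F)+O(1)=m(r,F)+N(r,\infty;F)+O(1)$. Since $m(r,F)=S(r,f)$, the whole problem reduces to estimating $N(r,\infty;F)$, the poles of $F=f^{(k)}/f$, which I would carry out by a purely local analysis at the poles and zeros of $f$: at a pole of $f$ of any order $p$, the function $f^{(k)}$ has a pole of order $p+k$, so $F$ has a pole of order exactly $k$, contributing $k\,\ol N(r,\infty;f)$; at a zero of $f$ of order $m\geq k$, $f^{(k)}$ has a zero of order $m-k$, so $F$ again has a pole of order exactly $k$, contributing $k\,\ol N(r,0;f\mid\geq k)$; and at a zero of $f$ of order $m<k$, $F$ has a pole of order at most $m$, contributing at most $N(r,0;f\mid<k)$. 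Summing these three contributions bounds $N(r,\infty;F)$ by the right-hand side modulo the $S(r,f)$ term, and combining with the previous two steps finishes the proof.

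The main obstacle is the local bookkeeping at the zeros of $f$, in particular the case of a zero of multiplicity $m<k$: there $f^{(k)}$ need not vanish at all, and its order of vanishing can range from $0$ up to values exceeding $m$, so one must check that in every subcase the pole of $F$ has order at most $m$ (and that any genuine zero of $F$ produced there only strengthens the inequality $N(r,0;f^{(k)}\mid f\neq0)\leq N(r,0;F)$). Once this case analysis is handled cleanly, the remaining steps are routine applications of the first fundamental theorem and the logarithmic derivative estimate.
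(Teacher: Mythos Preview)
Your argument is correct and is in fact the standard proof of this inequality. Note, however, that the paper does not supply its own proof of this lemma: it is quoted verbatim from Lahiri--Dewan \cite{7a} and used as a black box in the proofs of Theorems~\ref{t2} and~\ref{t3}. So there is no ``paper's proof'' to compare against here.

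For what it is worth, your route via $F=f^{(k)}/f$, the first fundamental theorem, the logarithmic-derivative estimate $m(r,F)=S(r,f)$, and the local pole analysis at poles of $f$ and at zeros of $f$ of order $\geq k$ and $<k$ is exactly the argument given in the original reference. The one point you flag as the ``main obstacle''---the case of a zero of $f$ of order $m<k$---is handled correctly: $f^{(k)}$ is holomorphic there, so $F$ has a pole of order at most $m$, and any additional vanishing of $f^{(k)}$ only decreases the pole order (or creates a zero of $F$), which preserves both inequalities $N(r,0;f^{(k)}\mid f\neq 0)\leq N(r,0;F)$ and the bound on $N(r,\infty;F)$.
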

\begin{lem}\label{lem3.4}\cite{9} Let $f$ be a non-constant meromorphic function and let \[\mathcal{R}(f)=\frac{\sum\limits _{i=0}^{n} a_{i}f^{i}}{\sum \limits_{j=0}^{m} b_{j}f^{j}}\] be an irreducible rational function in $f$ with constant coefficients $\{a_{i}\}$ and $\{b_{j}\}$ where $a_{n}\not=0$ and $b_{m}\not=0$. Then $$T(r,\mathcal{R}(f))=d\;T(r,f)+S(r,f),$$ where $d=\max\{n,m\}$.\end{lem}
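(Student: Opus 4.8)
The final statement to prove is Lemma \ref{lem3.4}, the classical Mokhon'ko--Mokhon'ko (Valiron) identity for rational functions of a meromorphic function.

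\textbf{Plan of proof.} The goal is the identity $T(r,\mathcal{R}(f))=d\,T(r,f)+S(r,f)$ with $d=\max\{n,m\}$, so I must establish matching upper and lower bounds on $T(r,\mathcal{R}(f))$. The plan is to treat the two inequalities separately. For the upper bound $T(r,\mathcal{R}(f))\le d\,T(r,f)+S(r,f)$ the natural route is to bound the proximity and counting functions of $\mathcal{R}(f)$ directly. Writing $\mathcal{R}(f)=A(f)/B(f)$ with $A(w)=\sum_{i=0}^n a_i w^i$ and $B(w)=\sum_{j=0}^m b_j w^j$, I would use the elementary estimate $m(r,A(f))\le n\,m(r,f)+O(1)$ (and similarly for $B$), which follows from $\log^+|A(f)|\le n\log^+|f|+O(1)$ on $|f|\ge 1$ together with boundedness of $A(f)$ where $|f|<1$; this gives $m(r,\mathcal{R}(f))\le m(r,A(f))+m(r,1/B(f))\le d\,m(r,f)+S(r,f)$, after handling $1/B(f)$ via the first fundamental theorem. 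For the counting term, a pole of $\mathcal{R}(f)$ can arise only from a pole of $f$ (contributing at most $d$ times the multiplicity) or from a zero of $B(f)$, and the latter is controlled by $N(r,0;B(f))\le m\,\overline N(r,\dots)+S$; collecting these yields $N(r,\infty;\mathcal{R}(f))\le d\,N(r,f)+\dots$. Summing proximity and counting gives the upper bound with the $d\,T(r,f)$ leading term.

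\textbf{Lower bound and the main obstacle.} The harder and more delicate direction is $T(r,\mathcal{R}(f))\ge d\,T(r,f)+S(r,f)$, and this is where I expect the main difficulty to lie. The standard approach splits into the two cases $n\ge m$ and $n<m$. When $n\ge m$ (so $d=n$), the key tool is the second fundamental theorem applied to $\mathcal{R}(f)$: one chooses finitely many of the roots of $A(w)=a_n\prod(w-\alpha_k)$ as target values, exploits the relation $\mathcal{R}(f)-c=(A(f)-cB(f))/B(f)$, and uses that for generic constants $c$ the numerator $A(w)-cB(w)$ has degree $n$ with $n$ distinct roots, so that $N\bigl(r,c;\mathcal{R}(f)\bigr)$ relates to $n$ preimage equations in $f$. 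Applying Nevanlinna's second main theorem to $\mathcal{R}(f)$ with sufficiently many such values forces $T(r,\mathcal{R}(f))$ to dominate $n\,T(r,f)$ up to $S(r,f)$. When $n<m$ the roles of numerator and denominator are interchanged: one applies the same argument to $1/\mathcal{R}(f)=B(f)/A(f)$, invokes $T(r,1/\mathcal{R}(f))=T(r,\mathcal{R}(f))+O(1)$ from the first fundamental theorem, and concludes $T(r,\mathcal{R}(f))\ge m\,T(r,f)+S(r,f)$.

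\textbf{Assembling the identity.} Combining the two inequalities gives $T(r,\mathcal{R}(f))=d\,T(r,f)+S(r,f)$ in both regimes, completing the proof. Throughout, the irreducibility of $\mathcal{R}$ (no common factor of $A$ and $B$) is essential: it guarantees that $A(f)$ and $B(f)$ have no shared zeros, so the pole-counting in the upper bound is not overcounted and the preimage equations in the lower bound genuinely have the full degree $d$. The one technical point requiring care is the selection of admissible target values $c$ avoiding the finitely many exceptional values where $A(w)-cB(w)$ drops degree or acquires multiple roots; since only finitely many such $c$ are excluded, enough good values remain for the second fundamental theorem to yield the sharp constant. All error terms of the form $m\bigl(r,B(f)/A(f)\bigr)$, $\overline N$-contributions, and the like are absorbed into $S(r,f)=o(T(r,f))$ by Lemma \ref{lem3.1} and the standard logarithmic-derivative estimates, so the announced identity follows.
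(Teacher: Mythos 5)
The paper does not actually prove this lemma: it is the classical Valiron--Mohon'ko identity imported wholesale from reference \cite{9}, so your attempt has to be measured against the standard literature proof, which is purely algebraic --- first fundamental theorem plus Euclidean-division/partial-fraction manipulations of $A/B$, no second main theorem anywhere --- and which for constant coefficients gives the stronger error term $O(1)$ with no exceptional set. Against that standard, your sketch has two genuine gaps. The first is in your upper bound: the step $m(r,\mathcal{R}(f))\le m(r,A(f))+m\left(r,1/B(f)\right)\le d\,m(r,f)+S(r,f)$ is false, because $m(r,1/B(f))$ is controlled by $T(r,f)$, not by $m(r,f)$; the first fundamental theorem gives $m(r,1/B(f))=T(r,B(f))-N(r,0;B(f))+O(1)$, which can be as large as $m\,T(r,f)$ even when $m(r,f)$ is negligible. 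Concretely, take $f=\beta+1/\sin z$ and $\mathcal{R}(w)=1/(w-\beta)$, so $d=1$ and $\mathcal{R}(f)=\sin z$: here $m(r,f)=O(\log r)$ while $m(r,\mathcal{R}(f))=T(r,\sin z)=T(r,f)+O(1)$. The identity survives in this example only because $N(r,\infty;\mathcal{R}(f))=N(r,\beta;f)=0$, i.e.\ proximity and counting trade mass --- which is exactly why they cannot be bounded separately. Your split, done correctly, yields only $T(r,\mathcal{R}(f))\le (n+m)T(r,f)+O(1)$; reaching the sharp coefficient $d$ requires a joint argument, e.g.\ dividing $A=QB+R_1$ with $\deg R_1<m$ and inducting down the Euclidean algorithm (Valiron's route), or Mohon'ko's original computation.

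The second gap is in your lower bound, which is both misstated and short of the sharp constant. Applying the second main theorem \emph{to} $\mathcal{R}(f)$ with values $c_1,\dots,c_p$ gives $(p-2)T(r,\mathcal{R}(f))\le\sum_{i}\ol N(r,c_i;\mathcal{R}(f))+S(r,\mathcal{R}(f))$, i.e.\ an \emph{upper} bound on $T(r,\mathcal{R}(f))$ in terms of counting functions --- the wrong direction. The workable pairing is the opposite one: the first fundamental theorem for $\mathcal{R}(f)$, namely $N(r,c_i;\mathcal{R}(f))\le T(r,\mathcal{R}(f))+O(1)$, combined with the second main theorem applied to $f$ itself with the $np$ distinct preimages $w_k(c_i)$ of generic values $c_i$ as targets (here your observation that irreducibility makes $A-cB$ and $B$ coprime is correct and needed). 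But even after this correction, the unavoidable ``$-2$'' in the second main theorem yields only $(np-2)\,T(r,f)\le p\,T(r,\mathcal{R}(f))+S_p(r,f)$, i.e.\ $T(r,\mathcal{R}(f))\ge (d-2/p)\,T(r,f)-S_p(r,f)$ for each fixed $p$ --- never the constant $d$ itself. Your assertion that ``enough good values remain for the second fundamental theorem to yield the sharp constant'' is precisely the missing step. It can be rescued by diagonalizing over $p\to\infty$, choosing radii $R_p\nearrow\infty$ so that the $p$-dependent exceptional sets, truncated to $[R_p,R_{p+1}]$, have summable measure; but that argument must be made explicitly and appears nowhere in your sketch, and the classical algebraic proof obtains the sharp constant (with error $O(1)$, no exceptional sets) without any of this machinery.
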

\begin{lem}\label{lem3.5}\cite{Lahiri & Complex Var & 2001}
	Let $\mathcal{F}$ and $\mathcal{G}$ be two non-constant meromorphic functions satisfying $E_\mathcal{F}(1,m) = E_\mathcal{G}(1,m)$, $0\leq m <\infty$ with $\mathcal{H}\not\equiv0$, then \beas N_E^{1)}(r,1;\mathcal{F})\leq N(r,\infty;\mathcal{H})+S(r,\mathcal{F})+S(r, \mathcal{G}).\eeas Similar inequality holds for $\mathcal{G}$ also. \end{lem}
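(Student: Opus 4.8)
The plan is to exploit the classical observation that the auxiliary function $\mathcal{H}$ defined in \eqref{e3.1} is holomorphic and vanishes at every common simple $1$-point of $\mathcal{F}$ and $\mathcal{G}$, and then to convert the resulting bound on the \emph{zeros} of $\mathcal{H}$ into a bound on its \emph{poles} by means of the First Fundamental Theorem together with the Lemma on the Logarithmic Derivative. The hypothesis $\mathcal{H}\not\equiv0$ is used precisely to make the zero-counting function of $\mathcal{H}$ a legitimate finite quantity.

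First I would fix a point $z_0$ counted by $N_E^{1)}(r,1;\mathcal{F})$, so that by the very definition of this counting function both $\mathcal{F}$ and $\mathcal{G}$ have a $1$-point of multiplicity exactly one at $z_0$ (the condition $E_\mathcal{F}(1,m)=E_\mathcal{G}(1,m)$ guarantees at least IM sharing of the value $1$, which is what makes these functions meaningful). Writing the local Taylor expansions $\mathcal{F}(z)=1+a_1(z-z_0)+a_2(z-z_0)^2+\cdots$ and $\mathcal{G}(z)=1+b_1(z-z_0)+b_2(z-z_0)^2+\cdots$ with $a_1,b_1\neq0$, I would expand each bracketed term of $\mathcal{H}$ as a Laurent series about $z_0$. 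A direct computation gives $\frac{\mathcal{F}''}{\mathcal{F}'}-\frac{2\mathcal{F}'}{\mathcal{F}-1}=-\frac{2}{z-z_0}+O(z-z_0)$, where the simple pole has residue $-2$ and the constant term vanishes, both independently of the Taylor coefficients; the identical expansion holds for the $\mathcal{G}$-bracket. Subtracting, the principal parts cancel and the (vanishing) constant terms cancel, so $\mathcal{H}$ is holomorphic at $z_0$ with $\mathcal{H}(z_0)=0$. Hence every point counted by $N_E^{1)}(r,1;\mathcal{F})$ is a zero of $\mathcal{H}$, which yields $N_E^{1)}(r,1;\mathcal{F})\leq\ol N(r,0;\mathcal{H})\leq N(r,0;\mathcal{H})$.

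The final step replaces the zero-counting of $\mathcal{H}$ by its pole-counting. By the First Fundamental Theorem, $N(r,0;\mathcal{H})\leq T(r,\mathcal{H})+O(1)=N(r,\infty;\mathcal{H})+m(r,\mathcal{H})+O(1)$. Since $\mathcal{H}$ is a finite combination of logarithmic-derivative-type expressions, namely $\mathcal{F}''/\mathcal{F}'=(\mathcal{F}')'/\mathcal{F}'$ and $\mathcal{F}'/(\mathcal{F}-1)$ together with their $\mathcal{G}$-analogues, the Lemma on the Logarithmic Derivative gives $m(r,\mathcal{H})=S(r,\mathcal{F})+S(r,\mathcal{G})$. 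Combining these estimates yields $N_E^{1)}(r,1;\mathcal{F})\leq N(r,\infty;\mathcal{H})+S(r,\mathcal{F})+S(r,\mathcal{G})$, and the symmetry of $\mathcal{H}$ under interchange of $\mathcal{F}$ and $\mathcal{G}$ (up to the analogous vanishing at simple $1$-points) gives the companion inequality for $\mathcal{G}$.

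I expect the only delicate point to be the local expansion in the second step: one must check carefully that not merely the simple pole but also the constant term of each bracket agrees, so that the difference genuinely cancels to order $O(z-z_0)$ and $\mathcal{H}$ acquires no pole at a simple common $1$-point. Everything else — the bound $m(r,\mathcal{H})=S(r,\mathcal{F})+S(r,\mathcal{G})$ and the passage from $N(r,0;\mathcal{H})$ to $N(r,\infty;\mathcal{H})$ — is routine Nevanlinna bookkeeping.
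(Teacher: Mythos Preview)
Your argument is correct and is precisely the classical proof of this result: the local expansion showing that each Schwarzian-type bracket equals $-2/(z-z_0)+O(z-z_0)$ at a common simple $1$-point, so that $\mathcal{H}$ vanishes there, followed by $N(r,0;\mathcal{H})\leq N(r,\infty;\mathcal{H})+m(r,\mathcal{H})+O(1)$ and the logarithmic-derivative estimate $m(r,\mathcal{H})=S(r,\mathcal{F})+S(r,\mathcal{G})$. The paper itself does not supply a proof of Lemma~\ref{lem3.5} but merely quotes it from \cite{Lahiri & Complex Var & 2001}; your write-up reproduces the standard argument from that reference, so there is nothing to compare beyond noting that you have filled in what the paper leaves as a citation.
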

\begin{lem}\label{lem2.11}\cite{Yi & 1999}
	Let $\mathcal{H}\equiv0$ and $\mathcal{F}$, $\mathcal{G}$ share $(\infty, 0)$, then $\mathcal{F}$, $\mathcal{G}$ share $(1,\infty)$, $(\infty,\infty)$.\end{lem}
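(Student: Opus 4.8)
The plan is to exploit the fact that each bracketed expression in the definition of $\mathcal H$ (see \eqref{e3.1}) is an exact logarithmic derivative. Indeed $\frac{\mathcal F''}{\mathcal F'}-\frac{2\mathcal F'}{\mathcal F-1}=\frac{d}{dz}\log\frac{\mathcal F'}{(\mathcal F-1)^2}$, and similarly for $\mathcal G$, so the hypothesis $\mathcal H\equiv0$ says precisely that $\frac{d}{dz}\log\frac{\mathcal F'}{(\mathcal F-1)^2}=\frac{d}{dz}\log\frac{\mathcal G'}{(\mathcal G-1)^2}$. First I would integrate this once to get $\frac{\mathcal F'}{(\mathcal F-1)^2}=C\,\frac{\mathcal G'}{(\mathcal G-1)^2}$ for a nonzero constant $C$ (note that $\mathcal H$ is only defined when $\mathcal F,\mathcal G$ are nonconstant and $\not\equiv1$, so the denominators are legitimate).

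Since $\frac{\mathcal F'}{(\mathcal F-1)^2}=-\frac{d}{dz}\frac{1}{\mathcal F-1}$, a second integration yields the key identity
\[
\frac{1}{\mathcal F-1}=\frac{C}{\mathcal G-1}+B
\]
for some constant $B$, with $C\neq0$. The whole argument then reduces to reading off the zero/pole structure of $\mathcal F-1$ and $\mathcal G-1$ from this single relation, splitting into the cases $B=0$ and $B\neq0$.

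If $B=0$, the identity collapses to the affine relation $\mathcal G-1=C(\mathcal F-1)$; here $\mathcal F=1\Leftrightarrow\mathcal G=1$ and $\mathcal F=\infty\Leftrightarrow\mathcal G=\infty$, with multiplicities preserved exactly, so $\mathcal F,\mathcal G$ share both $(1,\infty)$ and $(\infty,\infty)$ simultaneously. If $B\neq0$, I would rewrite the identity as the M\"obius relation $\mathcal F-1=\frac{\mathcal G-1}{B\mathcal G+(C-B)}$ and bring in the hypothesis that $\mathcal F,\mathcal G$ share $(\infty,0)$: a pole of $\mathcal F$ can occur only where $B\mathcal G+(C-B)=0$, i.e. where $\mathcal G=1-C/B$ is finite, which is never a pole of $\mathcal G$. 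Because the two functions share their poles, $\mathcal F$ can therefore have no poles at all, and the symmetric computation shows the same for $\mathcal G$; hence they share $(\infty,\infty)$ vacuously. Finally, in this case the denominator equals $C\neq0$ at every point where $\mathcal G=1$, so $\mathcal F=1\Leftrightarrow\mathcal G=1$ with equal multiplicities, giving $(1,\infty)$ as well.

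The integration step is routine once the logarithmic-derivative form of $\mathcal H$ is recognized, so I expect the only genuine subtlety to lie in the case $B\neq0$: one must argue carefully that the finite value $1-C/B$ forced on $\mathcal G$ at a prospective pole of $\mathcal F$ contradicts the shared-pole hypothesis, and verify the symmetric statement, so that the ``share $\infty$ CM'' conclusion is obtained vacuously (both functions entire) rather than by matching multiplicities.
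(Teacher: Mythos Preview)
The paper does not supply a proof of this lemma; it is quoted from Yi \cite{Yi & 1999} and used as a black box. Your argument is correct and is essentially the standard one: the integrated relation $\dfrac{1}{\mathcal F-1}=\dfrac{C}{\mathcal G-1}+B$ that you derive is precisely what the paper writes down (without derivation) as \eqref{e4.7} and \eqref{e4.20} when it invokes $\mathcal H\equiv0$ in the proofs of Theorems~\ref{t2} and~\ref{t3}, so your approach is fully consistent with the paper's use of the result. The case split $B=0$ versus $B\neq0$, together with the observation that in the second case a pole of $\mathcal F$ would force $\mathcal G$ to take the finite value $1-C/B$ (and symmetrically), correctly exploits the $(\infty,0)$ sharing hypothesis to conclude that both functions are pole-free, after which the $(1,\infty)$ sharing follows immediately from the M\"obius relation.
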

%\begin{lem}\label{lem2.12}\cite{Banerjee & Czech& 2007}
	%Let $F$, $G$ be two meromorphic functions sharing $(1, 2)$ and $(\infty, k)$, where $0\leq k\leq \infty$. Then one of the following cases occurs.
	%\begin{enumerate}
	%	\item $T(r,F) +T(r, G)\leq 2\{N_2(r, 0;F) +N_2(r, 0; G) +N(r,\infty;F) +N(r,\infty; G) +N_{∗}(r, 1;F, G)\} + S(r,F) + S(r, G)$.
		%\item $F\equiv G$
		%\item  $FG\equiv 1$.
	%\end{enumerate}\end{lem}
	
\begin{lem}\label{lem3.7}\cite{Lahiri & Banerjee & 2006}
	Suppose $\mathcal{F}$, $\mathcal{G}$ share $(1,0)$, $(\infty,0)$. If $\mathcal{H}\not \equiv 0,$ then, \beas N(r,\infty;\mathcal{H}) &\leq& N(r,0;\mathcal{F} \mid\geq 2) + N(r,0;\mathcal{G} \mid\geq 2)+\ol N_{*}(r,1;\mathcal{F}, \mathcal{G}) + \ol N_{*}(r,\infty;\mathcal{F}, \mathcal{G}) \\&&+ \ol N_{0}(r,0;\mathcal{F}^{\prime}) + \ol N_{0}(r,0;\mathcal{G}^{\prime})+S(r,\mathcal{F})+S(r,\mathcal{G}),\eeas where $\ol N_{0}(r,0;F^{\prime})$ is the reduced counting function of those zeros of $F^{\prime}$ which are not the zeros of $F(F-1)$ and $\ol N_{0}(r,0;G^{\prime})$ is similarly defined.\end{lem}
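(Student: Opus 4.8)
The plan is to exploit the structural fact that $\mathcal{H}$ is assembled entirely from logarithmic derivatives, so that each of its poles is simple; locating the poles then reduces to a residue computation carried out point by point. Writing $\mathcal{H}=\Phi_{\mathcal{F}}-\Phi_{\mathcal{G}}$, where $\Phi_{\mathcal{F}}=\frac{\mathcal{F}''}{\mathcal{F}'}-\frac{2\mathcal{F}'}{\mathcal{F}-1}$ and $\Phi_{\mathcal{G}}$ is defined analogously, I first observe that $\frac{\mathcal{F}''}{\mathcal{F}'}$ and $\frac{\mathcal{F}'}{\mathcal{F}-1}$ are the logarithmic derivatives of $\mathcal{F}'$ and of $\mathcal{F}-1$, hence each has only simple poles, situated at the zeros and poles of $\mathcal{F}'$ and of $\mathcal{F}-1$. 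Thus $\Phi_{\mathcal{F}}$, $\Phi_{\mathcal{G}}$, and therefore $\mathcal{H}$ carry at worst simple poles, and it suffices to decide, at each candidate point $z_0$, whether a nonzero residue survives.

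The residue computation I would organize according to the value that $\mathcal{F}$ and $\mathcal{G}$ take at $z_0$, using that the two functions share $(1,0)$ and $(\infty,0)$, so their $1$-points and poles occur at common locations. If $z_0$ is a pole of $\mathcal{F}$ of order $p$, a short Laurent expansion shows $\Phi_{\mathcal{F}}$ has residue $p-1$ there; when $z_0$ is simultaneously a pole of $\mathcal{G}$ of the same order the residues cancel and $\mathcal{H}$ is regular, so only poles of unequal multiplicity contribute, namely $\ol N_*(r,\infty;\mathcal{F},\mathcal{G})$. If $z_0$ is a $1$-point of $\mathcal{F}$ of multiplicity $q$, then $\Phi_{\mathcal{F}}$ has residue $-(q+1)$, and common $1$-points of equal multiplicity again cancel, leaving $\ol N_*(r,1;\mathcal{F},\mathcal{G})$. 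If $z_0$ is a zero of $\mathcal{F}'$ at which $\mathcal{F}\neq 0,1,\infty$, the pole of $\mathcal{H}$ comes from $\frac{\mathcal{F}''}{\mathcal{F}'}$ alone and is recorded by $\ol N_0(r,0;\mathcal{F}')$; the same analysis for $\mathcal{G}'$ gives $\ol N_0(r,0;\mathcal{G}')$. Finally, at a multiple zero of $\mathcal{F}$ the term $\frac{\mathcal{F}''}{\mathcal{F}'}$ again produces a simple pole, contributing $\ol N(r,0;\mathcal{F}\mid\geq 2)$ (and similarly for $\mathcal{G}$), while simple zeros of $\mathcal{F}$ and common simple poles yield no pole at all.

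Summing these contributions—each entering with a reduced count, since $\mathcal{H}$ has only simple poles—produces the asserted bound, the replacement of $\ol N(r,0;\mathcal{F}\mid\geq 2)$ by the larger $N(r,0;\mathcal{F}\mid\geq 2)$ being a harmless over-estimate; the terms $S(r,\mathcal{F})+S(r,\mathcal{G})$ absorb the negligible contributions from zeros and poles of the coefficients and from any finitely many exceptional points. I expect the principal difficulty to lie in the cancellation steps: one must verify not merely that the residues of $\Phi_{\mathcal{F}}$ and $\Phi_{\mathcal{G}}$ coincide when the multiplicities agree, but that this forces genuine holomorphy. This is exactly where the logarithmic-derivative structure is indispensable—because every constituent contributes only a simple pole, equality of residues already removes the singularity—so the real care goes into correctly classifying each $z_0$ by its governing value in $\{0,1,\infty\}$ and checking that the four scenarios are mutually exclusive and exhaustive among the possible poles of $\mathcal{H}$.
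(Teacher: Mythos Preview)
The paper does not supply its own proof of this lemma; it is quoted from Lahiri and Banerjee (2006) and stated without argument. Your proposal is precisely the standard proof: since $\mathcal{H}$ is a difference of logarithmic derivatives, all of its poles are simple, and a residue-by-residue analysis at each candidate singular point (common $1$-points, common poles, multiple zeros, and extraneous zeros of the derivatives) shows that the only surviving simple poles are recorded by the six counting functions on the right-hand side. Your residue computations at poles of order $p$ (residue $p-1$) and at $1$-points of multiplicity $q$ (residue $-(q+1)$) are correct, and the cancellation when multiplicities agree follows exactly as you describe, because equality of residues at a simple pole forces holomorphy. One small remark: the $S(r,\mathcal{F})+S(r,\mathcal{G})$ terms are in fact unnecessary here---the pole enumeration is exact, not asymptotic---so there are no ``coefficients'' or exceptional points to absorb; their presence in the stated inequality is simply a harmless over-estimate.
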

\begin{lem}\label{lem3.8}\cite{Lahiri & Complex Var & 2001}
	If two non-constant meromorphic functions $\mathcal{F}$, $\mathcal{G}$ share $(1,2)$, then \beas \ol N_0(r,0;\mathcal{G^{\prime}})+\ol N(r,1;\mathcal{G}\mid \geq2)+\ol N_*(r,1;\mathcal{F},\mathcal{G})\leq\ol N(r,\infty;\mathcal{G})+\ol N(r,0;\mathcal{G})+S(r,\mathcal{G}),\eeas where $\ol N_0(r,0;\mathcal{G^{\prime}})$ is the reduced counting function of those zeros of $\mathcal{G^{\prime}}$ which are not the zeros of $\mathcal{G}(\mathcal{G}-1)$.
\end{lem}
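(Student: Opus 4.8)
The plan is to relate every term on the left-hand side to the zeros of $\mathcal{G}'$ lying away from the zeros of $\mathcal{G}$, and then to control those zeros through Lemma \ref{lem3.3}.

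First I would use the hypothesis that $\mathcal{F}$, $\mathcal{G}$ share $(1,2)$ to classify the $1$-points of $\mathcal{G}$ according to multiplicity. Since the functions share $(1,2)$, every $1$-point of multiplicity at most $2$ is a common $1$-point of exactly the same multiplicity, whereas a $1$-point of multiplicity $\geq 3$ is again common with both orders $\geq 3$, though the precise orders may differ. Hence the multiplicities of $\mathcal{F}$ and $\mathcal{G}$ at a $1$-point can disagree only when both exceed $2$, which yields
\[
\ol N_*(r,1;\mathcal{F},\mathcal{G})\leq \ol N(r,1;\mathcal{G}\mid \geq 3).
\]
Writing $\ol N(r,1;\mathcal{G}\mid\geq 2)=\ol N(r,1;\mathcal{G}\mid=2)+\ol N(r,1;\mathcal{G}\mid\geq 3)$, I would then obtain
\[
\ol N(r,1;\mathcal{G}\mid\geq 2)+\ol N_*(r,1;\mathcal{F},\mathcal{G})\leq \ol N(r,1;\mathcal{G}\mid=2)+2\,\ol N(r,1;\mathcal{G}\mid\geq 3).
\]

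Next I would interpret the right-hand side as a contribution to the zeros of $\mathcal{G}'$ that are not zeros of $\mathcal{G}$. A $1$-point of $\mathcal{G}$ of multiplicity $m\geq 2$ is a zero of $\mathcal{G}'$ of multiplicity $m-1$, so a double $1$-point contributes at least $1$ and a $1$-point of multiplicity $\geq 3$ contributes at least $2$ (counted with multiplicity) to $N(r,0;\mathcal{G}'\mid\mathcal{G}\neq 0)$. These weights match the coefficients $1$ and $2$ in the display above, so the last inequality is dominated by the $1$-point part of $N(r,0;\mathcal{G}'\mid\mathcal{G}\neq 0)$. The remaining term $\ol N_0(r,0;\mathcal{G}')$ counts (reduced) zeros of $\mathcal{G}'$ at which $\mathcal{G}\neq 0,1$; these are disjoint from the $1$-points and add a further $\geq \ol N_0(r,0;\mathcal{G}')$. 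Combining the pieces would give the single inequality
\[
\ol N_0(r,0;\mathcal{G}')+\ol N(r,1;\mathcal{G}\mid\geq 2)+\ol N_*(r,1;\mathcal{F},\mathcal{G})\leq N(r,0;\mathcal{G}'\mid\mathcal{G}\neq 0).
\]

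Finally I would apply Lemma \ref{lem3.3} with $k=1$: because $N(r,0;\mathcal{G}\mid<1)=0$ and $\ol N(r,0;\mathcal{G}\mid\geq 1)=\ol N(r,0;\mathcal{G})$, that lemma reduces to
\[
N(r,0;\mathcal{G}'\mid\mathcal{G}\neq 0)\leq \ol N(r,\infty;\mathcal{G})+\ol N(r,0;\mathcal{G})+S(r,\mathcal{G}),
\]
which together with the previous inequality yields the assertion. The step I expect to require the most care is the multiplicity bookkeeping of the third paragraph: one must verify that the reduced quantities $\ol N(r,1;\mathcal{G}\mid\geq 2)$, $\ol N_*(r,1;\mathcal{F},\mathcal{G})$ and $\ol N_0(r,0;\mathcal{G}')$ are genuinely absorbed by the full (non-reduced) count of the zeros of $\mathcal{G}'$ off the zero set of $\mathcal{G}$, and that it is precisely the weight $2$ in the sharing hypothesis that supplies the factor needed to cover both $\ol N(r,1;\mathcal{G}\mid\geq 3)$ and $\ol N_*$ at the high-multiplicity $1$-points.
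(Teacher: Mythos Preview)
Your argument is correct and is precisely the standard route: bound $\ol N_*(r,1;\mathcal{F},\mathcal{G})$ by $\ol N(r,1;\mathcal{G}\mid\geq 3)$ via the weight-$2$ sharing, absorb $\ol N(r,1;\mathcal{G}\mid=2)+2\ol N(r,1;\mathcal{G}\mid\geq 3)+\ol N_0(r,0;\mathcal{G}')$ into $N(r,0;\mathcal{G}'\mid\mathcal{G}\neq 0)$, and finish with Lemma~\ref{lem3.3} for $k=1$. The paper does not supply its own proof of Lemma~\ref{lem3.8} (it is quoted from \cite{Lahiri & Complex Var & 2001}), but the very same reasoning appears verbatim in the paper at inequality~(\ref{e4.4}) in the proof of Theorem~\ref{t2}, so your approach coincides with the one the author relies on.
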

\begin{lem}\label{lem3.9}\cite{Yang & 1993}
	Let f and g be two non-constant meromorphic functions. Then
	\beas N\left(r,\infty;\frac{f}{g}\right)-N\left(r,\infty;\frac{g}{f}\right)= N(r,\infty;f)+N(r,0;g)-N(r,\infty;g)-N(r,0;f).\eeas
\end{lem}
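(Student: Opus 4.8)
The plan is to reduce the claimed identity to a purely local computation at each point, exploiting the fact that every term appearing is the integrated counting measure of orders of zeros or poles, and that both sides are $\mathbb{R}$-linear in these measures. First I would fix a point $z_0$ in the plane and record the order (valuation) $\nu_f(z_0)$ of $f$ at $z_0$, with the convention that $\nu_f(z_0)>0$ at a zero of that multiplicity, $\nu_f(z_0)<0$ at a pole, and $\nu_f(z_0)=0$ otherwise; likewise for $g$. Writing $a^{+}=\max\{a,0\}$ and $a^{-}=\max\{-a,0\}$, the multiplicity counted by $N(r,0;h)$ at $z_0$ is $\nu_h(z_0)^{+}$ and the multiplicity counted by $N(r,\infty;h)$ at $z_0$ is $\nu_h(z_0)^{-}$.

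The main step is then the elementary algebraic identity $a^{+}-a^{-}=a$. Since $\nu_{f/g}(z_0)=\nu_f(z_0)-\nu_g(z_0)$ and $\nu_{g/f}(z_0)=-\nu_{f/g}(z_0)$, the contribution of $z_0$ to the left-hand side is
\beas \nu_{f/g}(z_0)^{-}-\nu_{g/f}(z_0)^{-}=\nu_{f/g}(z_0)^{-}-\nu_{f/g}(z_0)^{+}=-\nu_{f/g}(z_0)=\nu_g(z_0)-\nu_f(z_0), \eeas
where I used $(-a)^{-}=a^{+}$. On the other hand, the contribution of $z_0$ to the right-hand side is
\beas \nu_f(z_0)^{-}+\nu_g(z_0)^{+}-\nu_g(z_0)^{-}-\nu_f(z_0)^{+}=\left(\nu_g(z_0)^{+}-\nu_g(z_0)^{-}\right)-\left(\nu_f(z_0)^{+}-\nu_f(z_0)^{-}\right)=\nu_g(z_0)-\nu_f(z_0). \eeas
Thus both sides agree point by point.

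Finally I would assemble the global statement: equality of the local multiplicities yields equality of the unintegrated counting functions $n(t,\cdot;\cdot)$ for every $t$, and integrating $\int_0^r(\cdot)\,dt/t$ together with the $\log r$ term at the origin in the standard Nevanlinna definition of $N$ promotes this to the asserted identity between the $N$-functions. The only point requiring genuine care is the bookkeeping at common zeros and common poles of $f$ and $g$, where $f/g$ and $g/f$ partially cancel; but the valuation formalism dispenses with any case analysis, since $\nu_{f/g}=\nu_f-\nu_g$ already encodes this cancellation uniformly whether $z_0$ is a zero, a pole, or a regular point of each factor. I therefore expect no substantial obstacle beyond this uniform local accounting.
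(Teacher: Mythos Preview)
Your argument is correct: the pointwise identity $\nu_{f/g}^{-}-\nu_{g/f}^{-}=\nu_g-\nu_f=\nu_f^{-}+\nu_g^{+}-\nu_g^{-}-\nu_f^{+}$ holds at every $z_0$, and integrating the resulting equality of unintegrated counting functions yields the lemma exactly as you describe. The paper itself gives no proof of this lemma at all---it is quoted from \cite{Yang & 1993} as a standard fact---so there is no approach to compare against; your valuation bookkeeping is a clean and self-contained way to supply the missing justification.
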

\begin{lem}\label{lem3.10}
	Let $f(z)$ be a transcendental entire function of zero-order, $q\in\mathbb{C}-\{0\}$ and $n, s\in\mathbb{N}$. If $\Phi(z)=f^nP(f)L(z,f)^s$, then $$(n+m)T(r,f)\leq T(r,\Phi)-N(r,0;L(z,f)^s )+S(r,f).$$
\end{lem}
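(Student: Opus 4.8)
The plan is to transfer the growth of $f^nP(f)$ onto $\Phi$ by peeling off the factor $L(z,f)^s$, exploiting throughout that $f$, $P(f)$ and $L(z,f)$ are all entire. First I would note that $f^nP(f)=a_mf^{n+m}+\cdots+a_0f^{n}$ is a rational (in fact polynomial) function of $f$ of degree $n+m$, so Lemma \ref{lem3.4} gives $T(r,f^nP(f))=(n+m)T(r,f)+S(r,f)$. Since $f$ is entire, $f^nP(f)$ is entire and $N(r,0;f^nP(f))$ is its only pole-type contribution after inversion; thus the first main theorem yields $m\!\left(r,\tfrac{1}{f^nP(f)}\right)=(n+m)T(r,f)-N(r,0;f^nP(f))+S(r,f)$. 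Likewise $\Phi$ is entire, so $m(r,\Phi)=T(r,\Phi)$.

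The heart of the argument is the identity $\tfrac{1}{f^nP(f)}=\tfrac{L(z,f)^s}{\Phi}$, which I would feed into the submultiplicativity of the proximity function: $m\!\left(r,\tfrac{1}{f^nP(f)}\right)\le m(r,L(z,f)^s)+m\!\left(r,\tfrac{1}{\Phi}\right)$. Applying the first main theorem once more gives $m\!\left(r,\tfrac1\Phi\right)=T(r,\Phi)-N(r,0;\Phi)+O(1)$, and because $\Phi=\big(f^nP(f)\big)L(z,f)^s$ is a product of entire functions the zero–counting function splits additively, $N(r,0;\Phi)=N(r,0;f^nP(f))+N(r,0;L(z,f)^s)$. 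Substituting and cancelling the common term $N(r,0;f^nP(f))$ on both sides leaves
\beas (n+m)T(r,f)\le T(r,\Phi)-N(r,0;L(z,f)^s)+m(r,L(z,f)^s)+S(r,f). \eeas

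What remains, and what I expect to be the main obstacle, is the accurate control of the residual proximity term $m(r,L(z,f)^s)=s\,m(r,L(z,f))$. This is precisely where the zero–order hypothesis and the $q$-shift analogue of the logarithmic derivative lemma are indispensable: writing $L(z,f)=f(z)\big(b_1\tfrac{f(qz+c)}{f(z)}+b_0\big)$ and invoking Lemma \ref{lem3.1} gives $m(r,L(z,f))\le m(r,f)+m\!\left(r,b_1\tfrac{f(qz+c)}{f(z)}+b_0\right)=T(r,f)+S(r,f)$, which is sharper than the crude bound coming from Lemma \ref{lem2.2} alone. The delicate point is to show that this shifted factor does not inflate the error beyond what the displayed inequality allows, so that only the zero–counting term $-N(r,0;L(z,f)^s)$ genuinely survives; keeping every ratio $f(qz+c)/f$ inside $S(r,f)$ via Lemma \ref{lem3.1}, rather than estimating $T(r,L(z,f))$ crudely, is the step on which the stated bound hinges.
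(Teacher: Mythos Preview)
Your decomposition leaves behind the term $m(r,L(z,f)^s)=s\,m(r,L(z,f))$, and this term is \emph{not} $S(r,f)$: your own estimate $m(r,L(z,f))\le T(r,f)+S(r,f)$ is sharp (indeed, if $b_0=0$ then $L(z,f)=b_1f(qz+c)$ is entire with $m(r,L(z,f))=T(r,f(qz+c))=T(r,f)+S(r,f)$ by Lemma~\ref{lem2.2}). Plugging this back into your displayed inequality only yields $(n+m-s)T(r,f)\le T(r,\Phi)-N(r,0;L(z,f)^s)+S(r,f)$, which is $s$ units short of the claim. The obstacle you flag is therefore not a delicacy to be managed but a genuine loss built into the decomposition $1/(f^nP(f))=L(z,f)^s/\Phi$: once $L(z,f)^s$ sits in a numerator by itself, its proximity function is of full size.

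The paper sidesteps this by starting from $f^{n+s}P(f)$ rather than $f^nP(f)$, writing
\[
f^{n+s}P(f)=\Phi\cdot\frac{f^s}{L(z,f)^s},
\]
and bounding $m\bigl(r,f^s/L(z,f)^s\bigr)$. The point is that only the \emph{ratio} $L(z,f)/f=b_1\,f(qz+c)/f+b_0$ has small proximity function by Lemma~\ref{lem3.1}; the paper converts $m(r,f^s/L^s)$ to $T(r,L^s/f^s)-N(r,\infty;f^s/L^s)$ via the first main theorem, uses $m(r,L^s/f^s)=S(r,f)$, and then applies Lemma~\ref{lem3.9} to the difference $N(r,\infty;L^s/f^s)-N(r,\infty;f^s/L^s)=N(r,0;f^s)-N(r,0;L^s)$. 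The extra $N(r,0;f^s)\le sT(r,f)$ is exactly absorbed by the additional $s$ in $(n+m+s)T(r,f)$ on the left, and the $-N(r,0;L^s)$ survives with the correct sign. To repair your argument, reroute through $f^{n+s}P(f)$ in this way so that $L(z,f)$ only ever appears divided by $f$.
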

\begin{proof}
	Using the first fundamental theorem of Nevalinna and Lemmas \ref{lem3.1} and \ref{lem3.9}, we have 
	\beas && (n+m+s)T(r,f)= m(r,f^{n+s}P(f))=m\left(r,\frac{\Phi(z)f(z)^{s}}{L(z,f)^{s}}\right)\\&\leq& m(r,\Phi(z))+m\left(r,\frac{f(z)^{s}}{L(z,f)^{s}}\right)+S(r,f)\\&\leq& m(r,\Phi(z))+T\left(r,\frac{f(z)^{s}}{L(z,f)^{s}}\right)-N\left(r,\infty;\frac{f(z)^{s}}{L(z,f)^{s}}\right)+S(r,f)\\&\leq& m(r,\Phi(z))+T\left(r,\frac{L(z,f)^{s}}{f(z)^{s}}\right)-N\left(r,\infty;\frac{f(z)^{s}}{L(z,f)^{s}}\right)+S(r,f)\\&\leq& m(r,\Phi(z))+ N(r,0;f(z)^{s})+N(r, 0;L(z,f)^{s})+S(r,f)\\&\leq& m(r,\Phi(z))+ sT(r,f)+N(r, 0;L(z,f)^{s})+S(r,f).\eeas This implies that \beas (n+m)T(r,f)\leq T(r,\Phi)-N(r,0;L(z,f)^s )+S(r,f).\eeas
\end{proof}
In a similar way as done in the proof of Lemma \ref{lem3.10}, we can prove the following lemma.
\begin{lem}\label{lem3.10a}
	Let $f(z)$ be a transcendental entire function of zero-order, $q\in\mathbb{C}-\{0\}$ and $n, s\in\mathbb{N}$. If $\Phi(z)=P(f)L(z,f)^s$, then $$mT(r,f)\leq T(r,\Phi)-N(r,0;L(z,f)^s )+S(r,f).$$
\end{lem}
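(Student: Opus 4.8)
The plan is to run the proof of Lemma \ref{lem3.10} essentially verbatim, the only change being that the polynomial in $f$ that we start from now has degree $m+s$ instead of $n+m+s$, since the factor $f^n$ no longer appears in $\Phi$. Because $f$ is a transcendental entire function of zero order, the functions $f(qz+c)$, $L(z,f)$, $P(f)$ and $f^sP(f)$ are all entire; hence $N(r,\infty;f^sP(f))=0$ and Lemma \ref{lem3.4} yields
\beas m(r,f^sP(f))=T(r,f^sP(f))=(m+s)\,T(r,f)+S(r,f).\eeas
The starting point is the algebraic identity $f^sP(f)=\Phi(z)\,f(z)^s/L(z,f)^s$, which rewrites this proximity function in terms of $\Phi$.

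From here I would apply subadditivity of the proximity function, then the first fundamental theorem together with the invariance $T(r,h)=T(r,1/h)+O(1)$ (with $h=f^s/L(z,f)^s$), to obtain
\beas (m+s)\,T(r,f)\leq m(r,\Phi)+T\left(r,\frac{L(z,f)^s}{f(z)^s}\right)-N\left(r,\infty;\frac{f(z)^s}{L(z,f)^s}\right)+S(r,f).\eeas
The two essential inputs are Lemmas \ref{lem3.1} and \ref{lem3.9}. Lemma \ref{lem3.1} gives $m\left(r,L(z,f)^s/f^s\right)=s\,m\left(r,b_1 f(qz+c)/f+b_0\right)=S(r,f)$, so the only surviving part of $T(r,L(z,f)^s/f^s)$ is its pole-counting term. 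Lemma \ref{lem3.9}, applied with the entireness of $f$ and $L(z,f)$ (which forces $N(r,\infty;f^s)=N(r,\infty;L(z,f)^s)=0$), then converts the difference of the two pole-counting functions into $N(r,0;f^s)-N(r,0;L(z,f)^s)$. This is precisely the step that produces the crucial subtracted term $-N(r,0;L(z,f)^s)$ of the conclusion.

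Combining these estimates I would reach
\beas (m+s)\,T(r,f)\leq m(r,\Phi)+N(r,0;f^s)-N(r,0;L(z,f)^s)+S(r,f),\eeas
after which bounding $N(r,0;f^s)\leq s\,T(r,f)$ and cancelling the common $s\,T(r,f)$ on both sides gives $mT(r,f)\leq m(r,\Phi)-N(r,0;L(z,f)^s)+S(r,f)\leq T(r,\Phi)-N(r,0;L(z,f)^s)+S(r,f)$, as required. I do not anticipate any real difficulty, since every step is a formal copy of Lemma \ref{lem3.10}; the only point demanding attention is to track the sign carefully when invoking Lemma \ref{lem3.9}, so that $N(r,0;L(z,f)^s)$ ends up subtracted rather than added.
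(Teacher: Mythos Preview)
Your proposal is correct and follows exactly the approach the paper intends: the paper does not spell out a proof of Lemma~\ref{lem3.10a} at all but simply says ``In a similar way as done in the proof of Lemma~\ref{lem3.10}'', and your reconstruction is a faithful adaptation of that argument with $f^n$ removed. Your attention to the sign when invoking Lemma~\ref{lem3.9} is well placed; indeed the displayed chain in the paper's proof of Lemma~\ref{lem3.10} has a typographical ``$+N(r,0;L(z,f)^s)$'' where ``$-N(r,0;L(z,f)^s)$'' is meant, and your computation recovers the correct sign needed for the stated conclusion.
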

\begin{lem}\label{lem3.11}\cite{Zhang & 2005}
	Let $f$ be a non-constant meromorphic function and $p,k\in\mathbb{N}.$ Then
	\beas N_p(r,0;f^{(k)}\leq T(r,f^{(k)})-T(r,f)+N_{p+k}(r, 0;f)+S(r,f),\eeas
		\beas N_p(r,0;f^{(k)}\leq k\ol N(r,\infty;f)+N_{p+k}(r, 0;f)+S(r,f).\eeas
\end{lem}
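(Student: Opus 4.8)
The plan is to establish the two displayed inequalities separately: the first by a proximity-function argument resting on the logarithmic derivative lemma and the first fundamental theorem, and the second as a direct corollary of the first. Throughout, $N_p(r,0;\cdot)$ denotes the counting function of zeros truncated at multiplicity $p$, and I will freely use that $T(r,1/h)=T(r,h)+O(1)$ for any meromorphic $h$.

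First I would pass to proximity functions. Writing $\tfrac{1}{f}=\tfrac{f^{(k)}}{f}\cdot\tfrac{1}{f^{(k)}}$ and invoking the logarithmic derivative lemma, $m\!\left(r,f^{(k)}/f\right)=S(r,f)$, I get $m(r,0;f)\le m\!\left(r,0;f^{(k)}\right)+S(r,f)$. Applying the first fundamental theorem to both $1/f$ and $1/f^{(k)}$ to replace each proximity function by a difference $T-N$, this rearranges into the untruncated bound
\[
N(r,0;f^{(k)})\le T(r,f^{(k)})-T(r,f)+N(r,0;f)+S(r,f),
\]
which is the desired estimate but with full counting functions on both sides.

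The main obstacle is reconciling the truncations, i.e.\ upgrading the display above to the stated truncated form. The decisive point is the pointwise inequality
\[
N(r,0;f)-N_{p+k}(r,0;f)\le N(r,0;f^{(k)})-N_p(r,0;f^{(k)}),
\]
which I would verify by comparing multiplicities locally. If $f$ has a zero of order $a$ at a point, then: when $a\ge k+1$ the point is a zero of $f^{(k)}$ of order $a-k$ and both sides contribute exactly $\max(0,a-p-k)$; when $1\le a\le k$ the left side contributes $0$ (as $a\le p+k$) while the right side is nonnegative; and a zero of $f^{(k)}$ that is not a zero of $f$ contributes $0$ on the left and a nonnegative amount on the right. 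Summing and subtracting this inequality from the untruncated bound, and using $N_p\le N$, yields the first stated inequality. This bookkeeping is the only delicate step; note in particular that routing instead through Lemma~\ref{lem3.3} produces superfluous terms such as $N(r,0;f\mid<k)$, so the proximity-function route is the clean one.

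Finally, the second inequality follows by estimating the growth-difference term $T(r,f^{(k)})-T(r,f)$. Since $m(r,f^{(k)})\le m(r,f)+S(r,f)$ by the logarithmic derivative lemma, while each pole of $f$ of order $t$ becomes a pole of $f^{(k)}$ of order $t+k$, so that $N(r,\infty;f^{(k)})=N(r,\infty;f)+k\ol N(r,\infty;f)$, the first fundamental theorem gives $T(r,f^{(k)})-T(r,f)\le k\ol N(r,\infty;f)+S(r,f)$. Substituting this into the first inequality produces the second, completing the proof.
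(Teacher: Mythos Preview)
Your argument is correct. The proximity-function route via $m(r,1/f)\le m(r,1/f^{(k)})+S(r,f)$ to obtain the untruncated estimate, followed by the local multiplicity comparison
\[
N(r,0;f)-N_{p+k}(r,0;f)\le N(r,0;f^{(k)})-N_p(r,0;f^{(k)}),
\]
is exactly the standard mechanism, and your case analysis on the order $a$ of a zero of $f$ is accurate. The deduction of the second inequality from the first via $T(r,f^{(k)})-T(r,f)\le k\ol N(r,\infty;f)+S(r,f)$ is also fine.

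As for comparison: the paper does not supply its own proof of this lemma---it is quoted from \cite{Zhang & 2005} and used as a black box. Your write-up is in fact the argument given in that reference, so there is no alternative approach in the present paper to contrast with.
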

\begin{lem}\label{lem3.12}\cite{Alzahare & Yi & 2004}
	If $\mathcal{F}$, $\mathcal{G}$ be two non-constant meromorphic functions such that they share $(1,1)$.
	Then \beas 2\ol N_L(r,1;\mathcal{F})+2\ol N_L(r,1;\mathcal{G})+\ol N_E^{(2}(r,1;\mathcal{F})-\ol N_{\mathcal{F}>2}(r,1;\mathcal{G})\leq N(r,1;\mathcal{G})-\ol N(r,1;\mathcal{G}).\eeas
\end{lem}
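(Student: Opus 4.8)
The plan is to prove this as a purely combinatorial (pointwise) estimate at the common $1$-points of $\mathcal{F}$ and $\mathcal{G}$, with no analytic input beyond the sharing hypothesis. First I would unpack $E_{\mathcal{F}}(1,1)=E_{\mathcal{G}}(1,1)$: weight-$1$ sharing forces every simple $1$-point of $\mathcal{F}$ to be a simple $1$-point of $\mathcal{G}$ and conversely, while every $1$-point of $\mathcal{F}$ of multiplicity $\geq 2$ is a $1$-point of $\mathcal{G}$ of multiplicity $\geq 2$ and conversely (the two multiplicities being allowed to differ). In particular all terms on the left are supported on common $1$-points, and I may label each such point $z_0$ by the pair $(p,q)$ of multiplicities of $\mathcal{F}-1$ and $\mathcal{G}-1$ there, with either $p=q=1$ or $p,q\geq 2$.

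Next I would compare, at each $z_0$, its contribution to the left-hand side with its contribution $q-1$ to the right-hand side $N(r,1;\mathcal{G})-\ol N(r,1;\mathcal{G})$. Here $\ol N_L(r,1;\mathcal{F})$ and $\ol N_L(r,1;\mathcal{G})$ count, once each, the common $1$-points with $p>q$ and with $q>p$ respectively; $\ol N_E^{(2}(r,1;\mathcal{F})$ counts once those with $p=q\geq 2$; and $\ol N_{\mathcal{F}>2}(r,1;\mathcal{G})$ counts once those common $1$-points at which $p>2$, i.e. $p\geq 3$. A direct case analysis then yields the local inequalities
\beas
&& p=q=1:\ 0\leq 0; \qquad p=q=2:\ 1\leq 1; \qquad p=q\geq 3:\ 1-1=0\leq q-1;\\
&& p>q\geq 2\ (p\geq 3):\ 2-1=1\leq q-1; \quad q>p=2:\ 2-0=2\leq q-1;\\
&& q>p\geq 3:\ 2-1=1\leq q-1.
\eeas

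Finally, summing these pointwise inequalities over all $1$-points (equivalently, passing to the associated integrated counting functions) gives precisely the asserted bound, with no error term since the estimate is exact and every $1$-point of $\mathcal{G}$ is a common $1$-point. The only place requiring care is the bookkeeping of the subtracted term $-\ol N_{\mathcal{F}>2}(r,1;\mathcal{G})$: it must be charged exactly at the points with $p\geq 3$ and omitted at the boundary configurations with $p=2$ (namely $p=q=2$ and $q>p=2$), which are exactly the cases in which the inequality is tight. Securing this boundary accounting, rather than any deep function-theoretic ingredient, is the crux of the argument.
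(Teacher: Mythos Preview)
Your pointwise combinatorial argument is correct and complete. The paper does not supply a proof of this lemma at all; it is quoted directly from Alzahary--Yi (2004) as a known auxiliary result, so there is no ``paper's proof'' to compare against. Your approach---classifying each common $1$-point by the multiplicity pair $(p,q)$, using the $(1,1)$-sharing hypothesis to reduce to the cases $p=q=1$ or $p,q\geq 2$, and then checking the local contribution against $q-1$---is exactly the standard way such sharing inequalities are established in the literature, and your case table is accurate.

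One minor caveat worth flagging: the notation $\ol N_{\mathcal{F}>2}(r,1;\mathcal{G})$ is not defined in this paper, and in the source literature it is sometimes taken to count only those common $1$-points with $p>2$ \emph{and} $p>q$ (or even $p>2\geq q$), rather than all points with $p\geq 3$ as you assume. Your interpretation subtracts the most, hence yields the smallest left-hand side; under the more restrictive readings the cases $p=q\geq 3$ and $q>p\geq 3$ give a local left-hand contribution of $1$ and $2$ respectively instead of $0$ and $1$, but since $q-1\geq 2$ and $q-1\geq 3$ in those cases the inequality still holds. So the argument is robust to the definitional ambiguity, but it would be worth stating explicitly which convention you adopt.
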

\begin{lem}\label{lem3.13}\cite{Banerjee & 2005}
	If two non-constant meromorphic functions $\mathcal{F}$, $\mathcal{G}$ share $(1,1)$, then 
	\beas \ol N_{\mathcal{F}>2}(r,1;\mathcal{G})\leq \frac{1}{2}(\ol N(r,0;\mathcal{F})+\ol N(r,\infty;\mathcal{F})-N_{0}(r,0;\mathcal{F}^{\prime}))+S(r,\mathcal{F}),\eeas where $N_{0}(r,0;\mathcal{F}^{\prime}))$ is the counting function of those zeros of $\mathcal{F}^{\prime}$ which are not the zeros of $\mathcal{F}(\mathcal{F}-1)$.
\end{lem}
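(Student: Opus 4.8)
The plan is to observe that, although the statement concerns common $1$-points of $\mathcal{F}$ and $\mathcal{G}$, the right-hand side involves only $\mathcal{F}$; the weight-one sharing serves merely to guarantee that every point counted on the left is a genuine $1$-point of $\mathcal{F}$ of multiplicity exceeding $2$. Consequently the whole estimate should follow from a single application of the second fundamental theorem to $\mathcal{F}$ with respect to the three values $0,1,\infty$, together with the elementary relation between the zeros of $\mathcal{F}'$ and the multiple $1$-points of $\mathcal{F}$.

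First I would translate the left-hand side into a statement about $\mathcal{F}'$. Each point counted once by $\ol N_{\mathcal{F}>2}(r,1;\mathcal{G})$ is a $1$-point of $\mathcal{F}$ of multiplicity $p>2$, i.e.\ $p\ge 3$, at which $\mathcal{F}'$ vanishes to order $p-1\ge 2$. Hence such a point contributes at least $2$ to $N(r,1;\mathcal{F})-\ol N(r,1;\mathcal{F})=\sum_{\mathcal{F}=1}(p-1)$, and discarding the nonnegative contributions of the remaining $1$-points gives
\beas 2\,\ol N_{\mathcal{F}>2}(r,1;\mathcal{G}) &\le& N(r,1;\mathcal{F})-\ol N(r,1;\mathcal{F}). \eeas
Next I would bound the right side of this inequality. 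By the first fundamental theorem $N(r,1;\mathcal{F})\le T(r,\mathcal{F})+O(1)$, so it is enough to control $T(r,\mathcal{F})-\ol N(r,1;\mathcal{F})$. For this I invoke the refined second fundamental theorem
\beas T(r,\mathcal{F}) &\le& \ol N(r,0;\mathcal{F})+\ol N(r,1;\mathcal{F})+\ol N(r,\infty;\mathcal{F})-N_0(r,0;\mathcal{F}')+S(r,\mathcal{F}), \eeas
in which the subtracted ramification term is precisely the counting function $N_0(r,0;\mathcal{F}')$ of those zeros of $\mathcal{F}'$ that do not lie over the zeros of $\mathcal{F}(\mathcal{F}-1)$. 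Moving $\ol N(r,1;\mathcal{F})$ to the left, combining with the two preceding estimates, and dividing by $2$ yields exactly the asserted bound.

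The one point that must be handled with care, and which I regard as the main obstacle, is to produce the second fundamental theorem in the sharpened form above, with the subtracted term being $N_0(r,0;\mathcal{F}')$ rather than the full ramification counting function. I would obtain it by starting from Nevanlinna's inequality
\beas T(r,\mathcal{F}) &\le& N(r,0;\mathcal{F})+N(r,1;\mathcal{F})+\ol N(r,\infty;\mathcal{F})-N(r,0;\mathcal{F}')+S(r,\mathcal{F}), \eeas
in which the pole contribution $N(r,\infty;\mathcal{F})-\ol N(r,\infty;\mathcal{F})$ has already been absorbed into the ramification, and then splitting $N(r,0;\mathcal{F}')$ over the three loci $\{\mathcal{F}=0\}$, $\{\mathcal{F}=1\}$, and the complement. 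Using $N(r,0;\mathcal{F}'\mid \mathcal{F}=0)=N(r,0;\mathcal{F})-\ol N(r,0;\mathcal{F})$ and $N(r,0;\mathcal{F}'\mid \mathcal{F}=1)=N(r,1;\mathcal{F})-\ol N(r,1;\mathcal{F})$ converts $N(r,0;\mathcal{F})$ and $N(r,1;\mathcal{F})$ into their reduced versions and leaves precisely $-N_0(r,0;\mathcal{F}')$. This is pure bookkeeping with counting functions; no estimate beyond the standard first and second main theorems is needed, and all error terms are $S(r,\mathcal{F})$ since $\mathcal{F}$ is a non-constant meromorphic function.
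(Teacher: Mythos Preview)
The paper does not supply a proof of this lemma at all; it is quoted directly from Banerjee \cite{Banerjee & 2005} and used as a black box. Your argument is correct and is in fact the standard proof: every point counted by $\ol N_{\mathcal{F}>2}(r,1;\mathcal{G})$ is a $1$-point of $\mathcal{F}$ of multiplicity at least $3$, hence contributes at least $2$ to $N(r,1;\mathcal{F})-\ol N(r,1;\mathcal{F})$; combining this with the refined second fundamental theorem for the three values $0,1,\infty$ (with the $-N_0(r,0;\mathcal{F}')$ term) and dividing by $2$ gives the bound. Your derivation of the refined second fundamental theorem by splitting $N(r,0;\mathcal{F}')$ over the loci $\{\mathcal{F}=0\}$, $\{\mathcal{F}=1\}$ and their complement is also the usual bookkeeping, so nothing is missing.
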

\begin{lem}\label{lem3.14}\cite{Banerjee & 2005}
	Let $\mathcal{F}$ and $\mathcal{G}$ be two non-constant meromorphic functions sharing $(1,0)$. Then \beas &&\ol N_L(r,1;\mathcal{F} )+2\ol N_L(r,1;\mathcal{G})+\ol N_E^{(2}(r,1;\mathcal{F})-\ol N_{\mathcal{F}>1}(r,1;\mathcal{G})-\ol N_{\mathcal{G}>1}(r,1;\mathcal{F})\\&&\leq N(r,1;\mathcal{G})-\ol N(r,1;\mathcal{G}).\eeas
\end{lem}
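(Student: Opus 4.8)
The plan is to reduce the estimate to a purely local comparison of counting densities at the common $1$-points of $\mathcal{F}$ and $\mathcal{G}$. Because $\mathcal{F}$ and $\mathcal{G}$ share $(1,0)$, i.e.\ share the value $1$ IM, their $1$-points coincide as a set; so I would fix a common $1$-point $z_0$ and denote by $p$ its multiplicity for $\mathcal{F}$ and by $q$ its multiplicity for $\mathcal{G}$, where $p,q\geq 1$. Each term on the left is a reduced counting function that selects $z_0$ precisely under a condition on $(p,q)$: $\ol N_L(r,1;\mathcal{F})$ selects it iff $p>q$; $\ol N_L(r,1;\mathcal{G})$ iff $q>p$; $\ol N_E^{(2}(r,1;\mathcal{F})$ iff $p=q\geq 2$; while $\ol N_{\mathcal{F}>1}(r,1;\mathcal{G})$ and $\ol N_{\mathcal{G}>1}(r,1;\mathcal{F})$ select it iff $p\geq 2$ and iff $q\geq 2$, respectively. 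On the right, $N(r,1;\mathcal{G})-\ol N(r,1;\mathcal{G})$ charges $z_0$ with the weight $q-1$. Writing $[\,\cdot\,]$ for the indicator of a condition, it therefore suffices to establish the pointwise inequality
\beas & [p>q]+2[q>p]+[p=q\geq 2]-[p\geq 2]-[q\geq 2]\leq q-1 & \eeas
for all integers $p,q\geq 1$; integrating the associated densities then returns the lemma.

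I would verify this inequality by a short case analysis on the order relation between $p$ and $q$. When $p=q$: for $p=q=1$ the left side equals $0=q-1$, while for $p=q\geq 2$ it equals $0+0+1-1-1=-1\leq q-1$. When $p>q$ (so $p\geq 2$): for $q=1$ the left side equals $1+0+0-1-0=0=q-1$, while for $q\geq 2$ it equals $1+0+0-1-1=-1\leq q-1$. When $q>p$ (so $q\geq 2$): for $p=1$ the left side equals $0+2+0-0-1=1\leq q-1$ since $q\geq 2$, while for $p\geq 2$ it equals $0+2+0-1-1=0\leq q-1$. In every case the inequality holds, with equality occurring exactly in the subcases $p=q=1$, $p>q=1$, and $p=1,\,q=2$.

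Summing over all common $1$-points and passing back to the integrated counting functions yields the assertion. No analytic machinery is needed beyond the IM-sharing hypothesis $(1,0)$, which is exactly what forces all the counting functions in the estimate to be supported on the common set of $1$-points and hence makes the term-by-term comparison legitimate. I expect the only genuine obstacle to be careful bookkeeping: one must translate each subscripted symbol $\ol N_{\mathcal{F}>1}$, $\ol N_{\mathcal{G}>1}$, $\ol N_E^{(2}$, $\ol N_L$ into its precise defining condition on $(p,q)$, since a single misread condition would corrupt the entire ledger. Once those conditions are pinned down, the estimate is a finite, mechanical check.
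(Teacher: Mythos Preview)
The paper does not supply its own proof of this lemma; it is simply quoted from Banerjee (2005). Your argument---reducing the inequality to a pointwise comparison of the local weights at each common $1$-point and then checking the finitely many cases on $(p,q)$---is exactly the standard way such counting-function inequalities are established, and your case analysis is correct. One small caveat: the symbol $\ol N_{\mathcal{F}>1}(r,1;\mathcal{G})$ is not defined in the present paper, and in Banerjee's original it is sometimes read as ``$p>q=1$'' rather than your ``$p\geq 2$''; however, rerunning your ledger with that alternative reading still yields the inequality in every case (the only affected lines are $p>q\geq 2$ and $q>p\geq 2$, where the left side becomes $1$ and $2$ respectively, both $\leq q-1$), so the proof survives either convention.
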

\begin{lem}\label{lem3.15}\cite{Banerjee & 2005}
		If $\mathcal{F}$ and $\mathcal{G}$ share $(1,0)$, then \beas \ol N_{L}(r,1;\mathcal{F})\leq \ol N(r,0;\mathcal{F})+\ol N(r,\infty;\mathcal{F})+S(r,\mathcal{F})\eeas
		\beas \ol N_{\mathcal{F}>1}(r,1;\mathcal{G})\leq \ol N(r,0;\mathcal{F} )+\ol N(r,\infty;\mathcal{F})-N_0(r,0;\mathcal{F}^{\prime})+S(r,\mathcal{F}).\eeas Similar inequalities holds for $\mathcal{G}$ also.
\end{lem}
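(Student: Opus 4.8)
The plan is to derive both estimates from a single auxiliary function, namely the logarithmic derivative $h=\mathcal{F}'/\mathcal{F}$, using only the structure of $\mathcal{F}$; the sharing hypothesis will enter merely through the observation that every point counted on the left-hand side is a \emph{multiple} $1$-point of $\mathcal{F}$. First I would record two basic facts about $h$. By the lemma on the logarithmic derivative, $m(r,h)=S(r,\mathcal{F})$. Since $h$ has only simple poles, located precisely at the zeros and the poles of $\mathcal{F}$ (with residue equal to $\pm$ the multiplicity and with no contribution from the $1$-points or the poles cancelling against zeros), one gets $N(r,\infty;h)=\ol N(r,0;\mathcal{F})+\ol N(r,\infty;\mathcal{F})$, whence $T(r,h)=\ol N(r,0;\mathcal{F})+\ol N(r,\infty;\mathcal{F})+S(r,\mathcal{F})$.

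The key local computation is this: at a $1$-point $z_0$ of $\mathcal{F}$ of multiplicity $p$ we have $\mathcal{F}=1+a(z-z_0)^p+\cdots$ with $\mathcal{F}(z_0)\neq 0$, so $h=\mathcal{F}'/\mathcal{F}$ has a zero of order $p-1$ at $z_0$. Because $\mathcal{F},\mathcal{G}$ share $(1,0)$, every point counted by $\ol N_L(r,1;\mathcal{F})$ is a common $1$-point with $p>q\geq 1$, hence $p\geq 2$; similarly each point counted by $\ol N_{\mathcal{F}>1}(r,1;\mathcal{G})$ has $p\geq 2$. Thus each such point is a zero of $h$ of order $p-1\geq 1$, so it is counted at least once in $N(r,0;h)$, giving $\ol N_L(r,1;\mathcal{F})\leq N(r,0;h)$. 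Applying the first fundamental theorem in the form $N(r,0;h)\leq T(r,h)+O(1)$ and inserting the expression for $T(r,h)$ from the first step yields the first inequality immediately.

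For the second inequality I would inspect $N(r,0;h)$ more closely. Its zeros split into two groups: the multiple $1$-points of $\mathcal{F}$, contributing $N(r,1;\mathcal{F})-\ol N(r,1;\mathcal{F})=\sum_{p\geq 2}(p-1)$, and the zeros of $\mathcal{F}'$ lying off $\{\mathcal{F}=0,1\}$, which contribute exactly $N_0(r,0;\mathcal{F}')$ (the zeros of $\mathcal{F}$ produce simple poles, not zeros, of $h$, and the poles of $\mathcal{F}$ are poles of $h$). Hence $N(r,0;h)=\big(N(r,1;\mathcal{F})-\ol N(r,1;\mathcal{F})\big)+N_0(r,0;\mathcal{F}')$. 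Since at a multiple $1$-point $p-1\geq 1$, the reduced count obeys $\ol N_{\mathcal{F}>1}(r,1;\mathcal{G})\leq \ol N(r,1;\mathcal{F}\mid\geq 2)\leq N(r,1;\mathcal{F})-\ol N(r,1;\mathcal{F})=N(r,0;h)-N_0(r,0;\mathcal{F}')$. Combining this with $N(r,0;h)\leq T(r,h)+O(1)=\ol N(r,0;\mathcal{F})+\ol N(r,\infty;\mathcal{F})+S(r,\mathcal{F})$ produces the stated bound carrying the $-N_0(r,0;\mathcal{F}')$ term, and the inequalities for $\mathcal{G}$ follow by interchanging the roles of $\mathcal{F}$ and $\mathcal{G}$.

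The main obstacle is bookkeeping rather than analysis: one must match the reduced counting functions $\ol N_L(r,1;\mathcal{F})$ and $\ol N_{\mathcal{F}>1}(r,1;\mathcal{G})$ against the weighted zero-count $N(r,0;h)$ with the correct multiplicities, and verify carefully that the zeros and poles of $\mathcal{F}$ feed only into the \emph{poles} of $h$ (so that $\ol N(r,0;\mathcal{F})$ and $\ol N(r,\infty;\mathcal{F})$ appear in their reduced form), while isolating the $N_0(r,0;\mathcal{F}')$ contribution cleanly enough to obtain the sharper second estimate.
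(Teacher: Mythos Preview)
The paper does not supply its own proof of this lemma; it is quoted from Banerjee (2005). Your argument via the logarithmic derivative $h=\mathcal{F}'/\mathcal{F}$ is correct and is essentially the standard proof: the simple poles of $h$ at the zeros and poles of $\mathcal{F}$ give $T(r,h)=\ol N(r,0;\mathcal{F})+\ol N(r,\infty;\mathcal{F})+S(r,\mathcal{F})$, while the multiple $1$-points of $\mathcal{F}$ are zeros of $h$, which after the first fundamental theorem yields both inequalities (the second with the extra $-N_0(r,0;\mathcal{F}')$ because those zeros of $\mathcal{F}'$ are disjoint from the $1$-points and hence can be subtracted off inside $N(r,0;h)$).
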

%\begin{lem}\label{lem3.16}\cite{Banerjee & 2005}\end{lem}

\section{\textbf{Proofs of the theorems}}
\begin{proof}[Proof of Theorem \ref{t1}]
	Suppose $\mathcal{F}=\mathcal{F}_1^{(k)}$, where $\mathcal{F}_1=P(f)L(z,f)^{s}$.
	Let us first suppose that $f$ is a transcendental entire function of zero order. On the contrary, we assume that $\mathcal{F}-\alpha(z)$ has only finitely many zeros. In view of Lemmas \ref{lem3.1}, \ref{lem3.10a}, \ref{lem3.11} and by the Second Theorem for small functions (see \cite{Yamanoi & 2002}), we get \beas mT(r,f)&\leq& T(r,P(f)L(z,f)^{s})-N(r,0;L(z,f)^s )+S(r,f)\\&\leq& T(r,\mathcal{F})+N_{k+1}(r,0;P(f)L(z,f)^s )-\ol N(r,0;\mathcal{F})\\&&-N(r,0;L(z,f)^{s})+S(r,f)\\&\leq& \ol N(r,0;\mathcal{F})+\ol N(r,\alpha;\mathcal{F})+\ol N(r,\infty;\mathcal{F})+N(r,0;L(z,f)^s )\\&& N_{k+1}(r,0;P(f))-\ol N(r,0;\mathcal{F})-N(r,0;L(z,f)^{s})+S(r,f)\\&\leq& N_{k+1}(r,0;P(f))+S(r,f)\\&\leq& (\Gamma_{1}+km_2)T(r,f)+S(r,f),\eeas which is not possible since $n\geq \Gamma_{1}+km_2$.\par 
	Suppose $f$ is a transcendental meromorphic function function of zero order. Now, \beas (m+s)T(r,f)&=&T(r,f^{s}P(f))=T\left(r,\frac{\mathcal{F}_1f^s}{L(z,f)^s}\right)+S(r,f)\\&\leq& T(r,\mathcal{F}_1)+T\left(r,\frac{f^s}{L(z,f)^s}\right)+S(r,f)\\&\leq& T(r,\mathcal{F}_1)+T\left(r,\frac{L(z,f)^s}{f^s}\right)+S(r,f)\\&\leq& T(r,\mathcal{F}_1)+sN\left(r,\infty;\frac{L(z,f)}{f}\right)+sm\left(r,\frac{L(z,f)}{f}\right)+S(r,f)\\&\leq& T(r,\mathcal{F}_1)+2sT(r,f)+S(r,f).\eeas i.e., \beas && (m-s)T(r,f)\leq T(r,\mathcal{F}_1)+S(r,f)\\ &\leq& T(r,\mathcal{F})+N_{k+1}(r,0;P(f)L(z,f)^s)-\ol N(r,0;\mathcal{F})+S(r,f)\\&\leq& \ol N(r,0;\mathcal{F})+\ol N(r,\alpha;\mathcal{F})+\ol N(r,\infty;\mathcal{F})+N_{k+1}(r,0;P(f)L(z,f)^s)\\&&-\ol N(r,0;\mathcal{F})+S(r,f)\\&\leq& \ol N(r,\infty;P(f)L(z,f)^s)+N_{k+1}(r,0;P(f)L(z,f)^s)+S(r,f)\\&\leq& (2+\chi_{_{b_{0}}})\ol N(r,\infty;f)+(\Gamma_{1}+km_2)\ol N(r,0;f)+(1+\chi_{_{b_{0}}})sT(r,f)+S(r,f)\\&\leq&(\Gamma_{1}+km_2+2+\chi_{_{b_{0}}}+s(1+\chi_{_{b_{0}}}))T(r,f)+S(r,f).\eeas i.e., \beas mT(r,f)\leq (\Gamma_{1}+km_2+2s+(1+s)\chi_{_{b_{0}}}+2)T(r,f)+S(r,f),\eeas which is not possible since $m>\Gamma_{1}+km_2+2s+(1+s)\chi_{_{b_{0}}}+2$ and hence the theorem is proved.
\end{proof}
\begin{proof}[Proof of Theorem \ref{t2}]
Let $\mathcal{F}=\displaystyle\frac{f^nP(f)L(z,f)}{p(z)}$ and $\mathcal{G}=\displaystyle\frac{g^nP(g)L(z,g)}{p(z)}$. From the given condition it follows that $\mathcal{F}$, $\mathcal{G}$ share $(1,2)$ except for the zeros of $p(z)$.\par 
\textbf{Case 1:} Let $\mathcal{H}\not\equiv 0$. From \ref{e3.1}, we obtain \bea \label{e4.1}N(r,\infty;\mathcal{H})&\leq& \nonumber\ol N(r,0;\mathcal{F}\mid\geq 2)+\ol N(r,0;\mathcal{G}\mid\geq 2)+\ol N_*(r,1;\mathcal{F},\mathcal{G})\\&&+\ol N_0(r,0;\mathcal{F}^{\prime})+\ol N_0(r,0;\mathcal{G}^{\prime}).\eea If $z_0$ be a simple zero of $\mathcal{F}-1$ such that $p(z_0)\neq0$, then $z_0$ is also a simple zero of $\mathcal{G}-1$ and hence a zero of $\mathcal{H}$. So \bea\label{e4.2} N(r,1\mathcal{F}\mid=1)\leq N(r,0;\mathcal{H})\leq N(r,\infty;\mathcal{H})+S(r,f)+S(r,g).\eea
Using (\ref{e4.1}) and (\ref{e4.2}), we get \bea \label{e4.3}\ol N(r,1;\mathcal{F})&=&N(r,1;\mathcal{F}\mid=1)+\ol N(r,1;\mathcal{F}\mid\geq2)\nonumber\\&\leq& N(r,\infty;\mathcal{H})+\ol N(r,1;\mathcal{F}\mid\geq 2)+S(r,f)+S(r,g)\nonumber\\&\leq& \ol N(r,0;\mathcal{F}\mid\geq 2)+\ol N(r,0;\mathcal{G}\mid\geq 2)+\ol N_*(r,1;\mathcal{F},\mathcal{G})+\ol N_0(r,0;\mathcal{F}^{\prime})\nonumber\\&&+\ol N_0(r,0;\mathcal{G}^{\prime})+\ol N(r,1;\mathcal{F}\mid\geq2)+S(r,f)+S(r,g).\eea
 Now, by Lemma \ref{lem3.3}, we obtain 
 \bea \label{e4.4}&& \ol N_{0}(r,0;\mathcal{G}^{\prime})+\ol N(r,1;\mathcal{F}\mid\geq2)+\ol N_*(r,1;\mathcal{F},\mathcal{G})\nonumber\\&\leq& N(r,0;\mathcal{G}^{\prime}\mid\mathcal{G}\neq0)\leq \ol N(r,0;\mathcal{G})+S(r,g).\eea
  Since $g(z)$ and $g(qz+c)$ share $0$ CM, we must have $N\left(r,\infty;\displaystyle\frac{L(z,g)}{g}\right)=0.$ Hence using (\ref{e4.3}), (\ref{e4.4}) and Lemmas \ref{lem3.10}, \ref{lem3.11}, we get from the Second Fundamental Theorem of Nevalinna, we have \beas && (n+m)T(r,f)\leq T(r,\mathcal{F})-N(r,0;L(z,f))+S(r,f)\\&\leq& \ol N(r,0;\mathcal{F})+\ol N(r,1;\mathcal{F})-\ol N_0(r,0;\mathcal{F}^{\prime})-N(r,0;L(z,f))+S(r,f)\\&\leq& N_2(r,0;\mathcal{F})+N_2(r,0;\mathcal{G})-N(r,0;L(z,f))+S(r,f)+S(r,g)\\&\leq& N_2(r,0;f^nP(f)L(z,f))+N_2(r,0;g^nP(g)L(z,g))-N(r,0;L(z,f))\\&&+S(r,f)+S(r,g)\\&\leq& N_2(r,0;f^nP(f))+N(r,0;L(z,f))+N_2(r,0;P(g))+N_2\left(r,0;g^{n+1}\frac{L(z,g)}{g}\right)\\&&-N\left(r,0;L(z,f)\right)+S(r,f)+S(r,g)\\&\leq& 2\ol N(r,0;f)+N(r,0;P(f))+N(r,0;P(g))+2\ol N(r,0;g)+N\left(r,0;\frac{L(z,g)}{g}\right)\\&&+S(r,f)+S(r,g)\\&\leq& (m+2)T(r,f)+(m+2)T(r,g)+T\left(r,\frac{L(z,g)}{g}\right)+S(r,f)+S(r,g)\\&\leq& (m+2)T(r,f)+(m+2)T(r,g)+N\left(r,\infty;\frac{L(z,g)}{g}\right)+m\left(r,\frac{L(z,g)}{g}\right)\\&&+S(r,f)+S(r,g)\\&\leq& (m+2)T(r,f)+(m+2)T(r,g)+S(r,f)+S(r,g).\eeas i.e., \bea\label{e4.5} nT(r,f)\leq 2T(r,f)+(m+2)T(r,g)+S(r,f)+S(r,g).\eea Since $N\left(r,\infty;\displaystyle\frac{L(z,g)}{g}\right)=0$, keeping in view of Lemmas \ref{lem3.1} and \ref{lem3.4}, we get \beas (n+m+1)T(r,g)&=&T(r,g^{n+1}P(g))=m\left(r,\frac{g^{n+1}P(g)}{\mathcal{G}}\right)+m(r,\mathcal{G})\\&\leq& m\left(r,\frac{g}{L(z,g)}\right)+m(r,\mathcal{G})+O(\log r)\\&\leq& T\left(r,\frac{L(z,g)}{g}\right)+T(r,\mathcal{G})+O(\log r)\\&\leq& m\left(r,\frac{L(z,g)}{g}\right)+T(r,\mathcal{G})+O(\log r)\\&\leq& T(r,\mathcal{G})+O(\log r).\eeas
  In a similar manner, we obtain
  \beas && (n+m+1)T(r,g)\leq T(r,\mathcal{G})+S(r,g)\\&\leq& \ol N(r,0;\mathcal{G})+\ol N(r,1;\mathcal{G})-\ol N_0(r,0;\mathcal{G}^{\prime})+S(r,g)\\&\leq& N_2(r,0;\mathcal{G})+N_2(r,1;\mathcal{G})+S(r,f)+S(r,g)\\&\leq& N_2(r,0;f^nP(f)L(z,f))+N_2(r,0;g^nP(g)L(z,g))+S(r,f)+S(r,g)\\&\leq& 2\ol N(r,0;f)+N(r,0;P(f))+N(r,0;L(z,f))+N(r,0;P(g))\\&&+N_2\left(r,0;g^{n+1}\frac{L(z,g)}{g}\right)+S(r,f)+S(r,g)\\&\leq& (m+2)T(r,f)+T(r,L(z,f))+(m+2)T(r,g)+N\left(r,0;\frac{L(z,g)}{g}\right)\\&&+S(r,f)+S(r,g)\\&\leq& (m+2)T(r,f)+m\left(r,\frac{L(z,f)}{f}\right)+m(r,f)+T\left(r,\frac{L(z,g)}{g}\right)\\&&+(m+2)T(r,g)+S(r,f)+S(r,g)\\&\leq&(m+2)T(r,f)+m\left(r,\frac{L(z,f)}{f}\right)+m(r,f)+N\left(r,\infty;\frac{L(z,g)}{g}\right)\\&&m\left(r,\frac{L(z,g)}{g}\right)+(m+2)T(r,g)+S(r,f)+S(r,g)\\&\leq& (m+3)T(r,f)+(m+2)T(r,g)+S(r,f)+S(r,g).\eeas i.e., \bea \label{e4.6} nT(r,g)\leq (m+3)T(r,f)+T(r,g)+S(r,f)+S(r,g).\eea
  Combining (\ref{e4.5}) and (\ref{e4.6}), we obtain
  \beas (n-m-5)T(r,f)+(n-m-3)T(r,g)\leq S(r,f)+S(r,g),\eeas which contradicts to the fact that $n\geq m+5$.\par 
 \noindent\textbf{ Case 2:} Suppose $H\equiv 0$. Then by integration, we get \bea\label{e4.7} \frac{1}{\mathcal{F}-1}=\frac{A}{\mathcal{G}-1}+B,\eea where $A, B$ are constant with $A\neq0$. From (\ref{e4.7}), it can be easily seen that $\mathcal{F}, \mathcal{G}$ share $(1,\infty)$. We now consider the following three sub-cases.\par 
\noindent \textbf{ Subcase 2.1:} Let $B\neq0$ and $A\neq B$. If $B=-1$, then from (\ref{e4.7}), we have $\mathcal{F}=\displaystyle\frac{-A}{\mathcal{G}-A-1}$.
Therefore, $\ol N(r, A+1;\mathcal{G})=\ol N(r,\infty;\mathcal{F})=N(r, 0;p)=S(r,g)$. So, in view of Lemma \ref{lem3.10} and the second fundamental theorem, we get \beas (n+m)T(r,g)&\leq& T(r, g^nP(g)L(z,g))-N(r,0;L(z,g))+S(r,g)\\&\leq& T(r,\mathcal{G})-N(r,0;L(z,g))+S(r,g)\\&\leq& \ol N(r,0;\mathcal{G})+\ol N(r,A+1;\mathcal{G})-N(r,0;L(z,g))+S(r,g)\\&\leq&(m+1)T(r,g)+S(r,g),\eeas which is a contradiction since $n\geq m+5$.\par 
If $B\neq -1$, then from (\ref{e4.7}), we get $\mathcal{F}-\left(1+\displaystyle\frac{1}{B}\right)=-\displaystyle\frac{A}{B^2\left(\mathcal{G}+\displaystyle\frac{A-B}{B}\right)}$. Therefore, $\ol N\left(r,\displaystyle\frac{B-A}{B};\mathcal{G}\right)=N(r,0;p(g))=O(\log r)=S(r,g)$. Using Lemmas \ref{lem3.11}, \ref{lem3.10} and the same argument as used in the case when $B =-1$ we get a contradiction.\par 
\noindent \textbf{ Subcase 2.2:} Let $B\neq0$ and $A=B$. If $B=-1$, then from (\ref{e4.7}), we have $\mathcal{F}\mathcal{G}\equiv1$. i.e., \bea \label{e4.8} f^nP(f)L(z,f)g^nP(g)L(z,g)\equiv p^2(z).\eea Keeping in view of (\ref{e4.8}) and $\text{deg}(p)<(n-1)/2$, we can say that $f$ and $g$ have zeros. Since $f$ and $g$ are of zero orders,$f$ and $g$ both must be constants which contradicts to our assumption. Therefore, (\ref{e4.8}) is not possible.\par If $B\neq-1$, from (\ref{e4.7}), we have $\displaystyle\frac{1}{\mathcal{F}}=\displaystyle\frac{A\mathcal{G}}{(1+A)\mathcal{G}-1}$. Hence, \beas \ol N\left(r,\frac{1}{1+A};\mathcal{G}\right)=\ol N(r,0;\mathcal{F})+S(r,f).\eeas So in view of Lemmas  \ref{lem3.1} and \ref{lem3.10} and the second fundamental theorem, we obtain \beas (n+m)T(r,g)&\leq& T(r,g^nP(g)L(z,g))-N(r,0;L(z,g))+S(r,g)\\&\leq&T(r, \mathcal{G})-N(r,0;L(z,g))+S(r,g)\\&\leq& \ol N(r,0;\mathcal{G})+\ol N\left(r,\frac{1}{1+A};\mathcal{G}\right)-N(r,0;L(z,g))+S(r,g)\\&\leq& \ol N(r,0;\mathcal{G})+\ol N(r,0;P(g))+\ol N(r,0;\mathcal{F})+S(r,g)\\&\leq& (m+1)T(r,g)+(m+2+\chi_{_{b_{0}}})T(r,f)+S(r,g).\eeas Therefore, \beas nT(r,g)\leq (m+3+\chi_{_{b_{0}}})T(r,g)+S(r,g),\eeas which is a contradiction since $n\geq m+5$.\par 
\noindent \textbf{ Subcase 2.3:} Let $B=0$. Then from (\ref{e4.7}), we get \bea\label{e4.9} \mathcal{F}=\frac{\mathcal{G}+A-1}{A}.\eea If $A\neq 1$, we obtain $\ol N(r,1-A;\mathcal{G})=\ol N(r,0;\mathcal{F})$. Therefore, we can similarly get a contradiction as in Subcase 2.2. Hence $A=1$ and from (\ref{e4.9}), we get \beas\mathcal{F}\equiv\mathcal{G}.\eeas i.e., \bea\label{e4.10} f^nP(f)L(z,f)\equiv g^nP(g)L(z,g).\eea
Let $h=f/g$. If $h$ is constant, then (\ref{e4.10}) reduces to \beas h^ng^nP(hg)L(z,hg)\equiv  g^nP(g)L(z,g).\eeas After simple calculation, we obtain \beas [a_mg^{n+m}(h^{n+m+1}-1)+a_{m-1}g^{n+m}(h^{n+m}-1)+\ldots+a_0g^{n}(h^{n+1}-1)]L(z,g)\equiv 0.\eeas Since $g$ is non-constant, we must have $h^d=1$, where $d=\text{CGD} (n+m+1, n+m,\ldots,n+m+1-i,\ldots,n+1)$ and $a_{m-i}\neq0$ for some $i=0,1,\ldots,m$. Hence $f(z)=tg$ for a constant $t$ such that $t^d=1$, where $d$ is mentioned above. If $h$ is not constant, then $f(z)$, $g(z)$ satisfy the algebraic difference equation $\mathcal{A}(w_1,w_2)\equiv 0$, where \beas \mathcal{A}(w_1,w_2)=w_1^nP(w_1)L(z,w_1)-w_2^nP(w_2)L(z,w_2).\eeas 
\end{proof}
\begin{proof}[Proof of Theorem \ref{t3}]
	Let $\mathcal{F}(z)=(P(f)L(z,f))^{(k)}$ and $\mathcal{G}(z)=(P(g)L(z,g))^{(k)}$. It follows that $\mathcal{F}$ and $\mathcal{G}$ share $(1,l)$.\par 
	\noindent\textbf{ Case 1:} Suppose $\mathcal{H}\not\equiv 0$.\par 
\textbf{ $(i)$}. Let $l\geq 2$. Using Lemmas \ref{lem3.5}, \ref{lem3.7} and \ref{lem3.8}, we get
	\bea \label{e4.11}&& \ol N(r,1;\mathcal{F})=N(r,1;\mathcal{F}\mid=1)+\ol N(r,1;\mathcal{F}\mid\geq2)\nonumber\\&\leq& \ol N(r,0;\mathcal{F}\mid\geq2)+\ol N(r,0;\mathcal{G}\mid\geq2)+\ol N_{*}(r,1;\mathcal{F},\mathcal{G})+\ol N(r,1;\mathcal{F}\mid\geq2)\nonumber\\&&\ol N_0(r,0;\mathcal{F}^{\prime})+\ol N_0(r,0;\mathcal{G}^{\prime})+S(r,f)+S(r,g)\nonumber\\&\leq&\ol N(r,0;\mathcal{F}\mid\geq2)+\ol N(r,0;\mathcal{G}\mid\geq2)+\ol N_0(r,0;\mathcal{F}^{\prime})+\ol N(r,0;\mathcal{G})\nonumber\\&&S(r,f)+S(r,g).\eea Hence, using (\ref{e4.11}), Lemmas \ref{lem3.1}, \ref{lem3.10a}, \ref{lem3.11}, we get from the second fundamental theorem
	\bea\label{e4.12} && mT(r,f)\leq T(r,P(f)L(z,f))-N(r,0;L(z,f))+S(r,f)\nonumber\\&\leq& T(r,\mathcal{F})+N_{k+2}(r,0;P(f)L(z,f))-N_2(r,0;\mathcal{F})-N(r,0;L(z,f))+S(r,f)\nonumber\\&\leq& \ol N(r,0;\mathcal{F})+\ol N(r,1;\mathcal{F})+N_{k+2}(r,0;P(f)L(z,f))-N_2(r,0;\mathcal{F})-\ol N_0(r,0;\mathcal{F}^{\prime})\nonumber\\&&-N(r,0;L(z,f))+S(r,f)\nonumber\\&\leq& N_2(r,0;\mathcal{G})+N_{k+2}(r,0;P(f)L(z,f))-N(r,0;L(z,f))+S(r,f)+S(r,g)\nonumber\\&\leq& N_{k+2}(r,0;P(f)L(z,f))+N_{k+2}(r,0;P(g)L(z,g))-N(r,0;L(z,f))\nonumber\\&&+S(r,f)+S(r,g)\nonumber\\&\leq& N_{k+2}(r,0;P(f))+N_{k+2}(r,0;P(g))+N(r,0;L(z,g))+S(r,f)+S(r,g)\nonumber\\&\leq& (m_1+2m_2+km_2)(T(r,f)+T(r,g))+T(r,L(z,g))+S(r,f)+S(r,g)\nonumber\\&\leq& (m_1+2m_2+km_2)(T(r,f)+T(r,g))+m\left(r,\frac{L(z,g)}{g}\right)+m(r,g)\nonumber\\&&S(r,f)+S(r,g)\nonumber\\&\leq& (m_1+2m_2+km_2)(T(r,f)+T(r,g))+T(r,g)+S(r,f)+S(r,g).\eea Similarly,\bea \label{e4.13}mT(r,g)&\leq& (m_1+2m_2+km_2)(T(r,f)+T(r,g))+T(r,f)+S(r,f)\nonumber\\&&+S(r,g).\eea Combining (\ref{e4.12}) and (\ref{e4.13}), we have \beas m(T(r,f)+T(r,g))\leq (2\Gamma_{0}+2km_2+1)(T(r,f)+T(r,g))+S(r,f)+S(r,g),\eeas which is a contradiction since $m>2\Gamma_{0}+2km_2+1$.\par 
	\textbf{$(ii)$.} Let $l=1$, using Lemmas \ref{lem3.3}, \ref{lem3.5}, \ref{lem3.7}, \ref{lem3.12}, \ref{lem3.13}, we get \bea \label{e4.14} \ol N(r,1;\mathcal{F})&\leq& N(r,1;\mathcal{F}\mid=1)+\ol N_{L}(r,1;\mathcal{F})+\ol N_{L}(r,1;\mathcal{G})+\ol N_{E}^{(2}(r,1;\mathcal{F})\nonumber\\&\leq& \ol N(r,0;\mathcal{F}\mid\geq2)+\ol N(r,0;\mathcal{G}\mid\geq2)+\ol N_*(r,1;\mathcal{F},\mathcal{G})+\ol N_{L}(r,1;\mathcal{F})\nonumber\\&&+\ol N_L(r,1;\mathcal{G})+\ol N_E^{(2}(r,1;\mathcal{F})+\ol N_0(r,0;\mathcal{F}^{\prime})+\ol N_0(r,0;\mathcal{G}^{\prime})\nonumber\\&&+S(r,f)+S(r,g)\nonumber\\&\leq& \ol N(r,0;\mathcal{F}\mid\geq2)+\ol N(r,0;\mathcal{G}\mid\geq2)+2\ol N_{L}(r,1;\mathcal{F})\nonumber+2\ol N_L(r,1;\mathcal{G})\nonumber\\&&+\ol N_E^{(2}(r,1;\mathcal{F})+\ol N_0(r,0;\mathcal{F}^{\prime})+\ol N_0(r,0;\mathcal{G}^{\prime})\nonumber+S(r,f)+S(r,g)\nonumber\\&\leq& \ol N(r,0;\mathcal{F}\mid\geq2)+\ol N(r,0;\mathcal{G}\mid\geq2)+\ol N_{\mathcal{F}>2}(r,1;\mathcal{G})+N(r,1;\mathcal{G})\nonumber\\&&-\ol N(r,1;\mathcal{G})+\ol N_0(r,0;\mathcal{F}^{\prime})+\ol N_0(r,0;\mathcal{G}^{\prime})\nonumber+S(r,f)+S(r,g)\nonumber\\&\leq& \ol N(r,0;\mathcal{F}\mid\geq2)+\ol N(r,0;\mathcal{G}\mid\geq2)+\frac{1}{2}\ol N(r,0;\mathcal{F})+N(r,1;\mathcal{G})\nonumber\\&&-\ol N(r,1;\mathcal{G})+\ol N_0(r,0;\mathcal{F}^{\prime})+\ol N_0(r,0;\mathcal{G}^{\prime})\nonumber+S(r,f)+S(r,g)\nonumber\\&\leq& \ol N(r,0;\mathcal{F}\mid\geq2)+\ol N(r,0;\mathcal{G}\mid\geq2)+\frac{1}{2}\ol N(r,0;\mathcal{F})+\ol N_0(r,0;\mathcal{F}^{\prime})\nonumber\\&& N(r,0;\mathcal{G}^{\prime}\mid\mathcal{G}\neq0)+S(r,f)+S(r,g)\nonumber\\&\leq& \ol N(r,0;\mathcal{F}\mid\geq2)+\frac{1}{2}\ol N(r,0;\mathcal{F})+N_2(r,0;\mathcal{G})+\ol N_0(r,0;\mathcal{F}^{\prime})\nonumber\\&&+S(r,f)+S(r,g).\eea Hence using (\ref{e4.14}), Lemmas \ref{lem3.1}, \ref{lem3.10a}, \ref{lem3.11} and the second fundamental theorem, we get 
	\bea \label{e4.15} mT(r,f)&\leq& T(r, P(f)L(z,f))-N(r,0;L(z,f))+S(r,f)\nonumber\\&\leq& T(r,\mathcal{F})+N_{k+2}(r,0;P(f)L(z,f))-N_2(r,0;\mathcal{F})-\ol N_0(r,0;\mathcal{F}^{\prime})\nonumber\\&&-N(r,0;L(z,f))+S(r,f)\nonumber\\&\leq& \ol N(r,0;\mathcal{F})+\ol N(r,1;\mathcal{F})+N_{k+2}(r,0;P(f)L(z,f))-N_2(r,0;\mathcal{F})\nonumber\\&&-\ol N_0(r,0;\mathcal{F}^{\prime})-N(r,0;L(z,f))+S(r,f)+S(r,g)\nonumber\\&\leq& N_{k+2}(r,0;P(f)L(z,f))+\frac{1}{2}\ol N(r,0;\mathcal{F})+N_2(r,0;\mathcal{G})-N(r,0;L(z,f))\nonumber\\&&+S(r,f)+S(r,g)\nonumber\\&\leq& N_{k+2}(r,0;P(f)L(z,f))+N_{k+2}(r,0;P(g)L(z,g))-N(r,0;L(z,f))\nonumber\\&&+\frac{1}{2}N_{k+1}(r,0;P(f)L(z,f))++S(r,f)+S(r,g)\nonumber\\&\leq& (m_1+2m_2+km_2)(T(r,f)+T(r,g))+\frac{1}{2}(m_1+m_2+km_2)T(r,f)\nonumber\\&&+T(r,g)+\frac{1}{2}T(r,f)+S(r,f)+S(r,g).\eea In a similar manner, we get 
	\bea \label{e4.16} mT(r,g)&\leq& (m_1+2m_2+km_2)(T(r,f)+T(r,g))+\frac{1}{2}(m_1+m_2+km_2)T(r,g)\nonumber\\&&+T(r,f)+\frac{1}{2}T(r,g)+S(r,f)+S(r,g).\eea Combining (\ref{e4.15}) and (\ref{e4.16}), we get \beas && m(T(r,f)+T(r,g))\\&\leq& \left(2(m_1+2m_2+km_2)+\frac{1}{2}(m_1+m_2+km_2)+\frac{3}{2}\right)(T(r,f)+T(r,g))\nonumber\\&& S(r,f)+S(r,g).\eeas i.e., \beas && m(T(r,f)+T(r,g))\\&\leq& \frac{1}{2}(4\Gamma_{0}+\Gamma_{1}+5km_2+3)(T(r,f)+T(r,g))+S(r,f)+S(r,g), \eeas which is a contradiction since $m>\displaystyle\frac{1}{2}(4\Gamma_{0}+\Gamma_{1}+5km_2+3)$.\par 
	\noindent\textbf{ $(iii)$.} Let $l=0$. Using Lemmas \ref{lem3.3}, \ref{lem3.5}, \ref{lem3.7}, \ref{lem3.14}, \ref{lem3.15}, we get \bea \label{e4.17} \ol N(r,1;\mathcal{F})&\leq& N_{E}^{1)}(r,1;\mathcal{F})+\ol N_L(r,1;\mathcal{F})+\ol N_L(r,1;\mathcal{G})+\ol N_{E}^{(2}(r,1;\mathcal{F})\nonumber\\&\leq& \ol N(r,0;\mathcal{F}\mid\geq 2)+\ol N(r,0;\mathcal{G}\mid\geq 2)+\ol N_*(r,1;\mathcal{F},\mathcal{G})+\ol N_L(r,1;\mathcal{F})\nonumber\\&&+\ol N_L(r,1;\mathcal{G})+\ol N_{E}^{(2}(r,1;\mathcal{F})+\ol N_0(r,0;\mathcal{F}^{\prime})+\ol N_0(r,0;\mathcal{G}^{\prime})\nonumber\\&\leq& \ol N(r,0;\mathcal{F}\mid\geq 2)+\ol N(r,0;\mathcal{G}\mid\geq 2)+2\ol N_L(r,1;\mathcal{F})+2\ol N_L(r,1;\mathcal{G})\nonumber\\&&+\ol N_{E}^{(2}(r,1;\mathcal{F})+\ol N_0(r,0;\mathcal{F}^{\prime})+\ol N_0(r,0;\mathcal{G}^{\prime})+S(r,f)+S(r,g)\nonumber\\&\leq& \ol N(r,0;\mathcal{F}\mid\geq 2)+\ol N(r,0;\mathcal{G}\mid\geq 2)+\ol N_L(r,1;\mathcal{F})+\ol N_{\mathcal{F}>1}(r,1;\mathcal{G})\nonumber\\&&+\ol N_{\mathcal{G}>1}(r,1;\mathcal{F})+N(r,1;\mathcal{G})-\ol N(r,1;\mathcal{G})+\ol N_0(r,0;\mathcal{F}^{\prime})\nonumber\\&&+\ol N_0(r,0;\mathcal{G}^{\prime})+S(r,f)+S(r,g)\nonumber\\&\leq& \ol N(r,0;\mathcal{F}\mid\geq 2)+\ol N(r,0;\mathcal{G}\mid\geq 2)+2\ol N(r,0;\mathcal{F})+\ol N(r,0;\mathcal{G})\nonumber\\&&+N(r,1;\mathcal{G})-\ol N(r,1;\mathcal{G})+\ol N_0(r,0;\mathcal{F}^{\prime})+\ol N_0(r,0;\mathcal{G}^{\prime})\nonumber\\&\leq& N_2(r,0;\mathcal{F})+\ol N(r,0;\mathcal{F})+N_2(r,0;\mathcal{G})+N(r,1;\mathcal{G})-\ol N(r,1;\mathcal{G})\nonumber\\&&+ N_0(r,0;\mathcal{G}^{\prime})+N_0(r,0;\mathcal{F}^{\prime})+S(r,f)+S(r,g)\nonumber\\&\leq& N_2(r,0;\mathcal{F})+\ol N(r,0;\mathcal{F})+N_2(r,0;\mathcal{G})+N(r,0;\mathcal{G}^{\prime}\mid \mathcal{G}\neq0)\nonumber\\&&+N_0(r,0;\mathcal{F}^{\prime})+S(r,f)+S(r,g)\nonumber\\&\leq& N_2(r,0;\mathcal{F})+\ol N(r,0;\mathcal{F})+N_2(r,0;\mathcal{G})+\ol N(r,0;\mathcal{G})\nonumber\\&&+N_0(r,0;\mathcal{F}^{\prime})+S(r,f)+S(r,g).\eea Hence using (\ref{e4.17}), Lemmas \ref{lem3.1}, \ref{lem3.10a}, \ref{lem3.11} and the second fundamental theorem, we get \bea \label{e4.18} mT(r,f)&\leq& T(r,P(f)L(z,f))-N(r,0;L(z,f))+S(r,f)\nonumber\\&\leq& T(r,\mathcal{F})+N_{k+2}(r,0;P(f)L(z,f))-N_2(r,0;\mathcal{F})-N(r,0;L(z,f))\nonumber\\&&+S(r,f)\nonumber\\&\leq& \ol N(r,0;\mathcal{F})+\ol N(r,1;\mathcal{F})+N_{k+2}(r,0;P(f)L(z,f))-N_2(r,0;\mathcal{F})\nonumber\\&&-N(r,0;L(z,f))+S(r,f)\nonumber\\&\leq& N_{k+2}(r,0;P(f)L(z,f))+2\ol N(r,0;\mathcal{F})+N_2(r,0;\mathcal{G})+\ol N(r,0;\mathcal{G})\nonumber\\&&-N(r,0;L(z,f))+S(r,f)+S(r,g)\nonumber\\&\leq& N_{k+2}(r,0;P(f)L(z,f))+2N_{k+1}(r,0;P(f)L(z,f))\nonumber\\&&+N_{k+2}(r,0;P(g)L(z,g))+N_{k+1}(r,0;P(g)L(z,g))-N(r,0;L(z,f))\nonumber\\&&+S(r,f)+S(r,g)\nonumber\\&\leq& N_{k+2}(r,0;P(f))+2N_{k+1}(r,0;P(f)L(z,f))+N_{k+2}(r,0;P(g)L(z,g))\nonumber\\&&+N_{k+1}(r,0;P(g)L(z,g)+S(r,f)+S(r,g)\nonumber\\&\leq& (m_1+2m_2+km_2+2)(T(r,f)+T(r,g))+2(m_1+m_2+km_2)T(r,f)\nonumber\\&&+(m_1+m_2+km_2)T(r,g)+S(r,f)+S(r,g). \eea In a similar manner we get \bea \label{e4.19} mT(r,g)&\leq& (m_1+2m_2+km_2+2)(T(r,f)+T(r,g))+2(m_1+m_2+km_2)T(r,g)\nonumber\\&&+(m_1+m_2+km_2)T(r,f)+S(r,f)+S(r,g).\eea Combining (\ref{e4.18}) and (\ref{e4.19}), we have \beas && m(T(r,f)+T(r,g))\nonumber\\&\leq& (m_1+2m_2+km_2+2+2(m_1+m_2+km_2)+(m_1+m_2+km_2))(T(r,f)\\&&+T(r,g))+S(r,f)+S(r,g) \\&=& (2\Gamma_{0}+3\Gamma_{1}+5km_2+4)(T(r,f)+T(r,g))+S(r,f)+S(r,g),\eeas which is a contradiction since $m>2\Gamma_{0}+3\Gamma_{1}+5km_2+4$.\par 
	\noindent\textbf{ Case 2:} Let $\mathcal{H}\equiv 0$. By integration, we get \bea\label{e4.20} \frac{1}{\mathcal{F}-1}=\frac{A}{\mathcal{G}-1}+B,\eea where $A, B$ are constant with $A\neq0$. From (\ref{e4.20}), it can be easily seen that $\mathcal{F}, \mathcal{G}$ share $(1,\infty)$. We now consider the following sub-cases.\par 
	\noindent \textbf{ Subcase 2.1:} Let $B\neq0$ and $A\neq B$. If $B=-1$, then from (\ref{e4.20}), we have $\mathcal{F}=\displaystyle\frac{-A}{\mathcal{G}-A-1}$.
	Therefore, $\ol N(r, A+1;\mathcal{G})=\ol N(r,\infty;\mathcal{F})=S(r,f)$. Therefore, using Lemma \ref{lem3.10a} and the second fundamental theorem of Nevalinna, we get \beas && mT(r,g)\leq T(r,P(g)L(z,g))-N(r,0;L(z,g))+S(r,g)\\&\leq& T(r,\mathcal{G})+N_{k+2}(r,0;P(g)L(z,g))-N_2(r,0;\mathcal{G})-N(r,0;L(z,g))+S(r,g)\\&\leq& \ol N(r,0;\mathcal{G})+\ol N(r,A+1;\mathcal{G})+N_{k+2}(r,0;P(g)L(z,g))-N_2(r,0;\mathcal{G})\\&&-N(r,0;L(z,g))+S(r,g)\\&\leq& N_{k+1}(r,0;P(g)L(z,g))+N_{k+2}(r,0;P(g))+N(r,0;L(z,g))-N(r,0;L(z,g))\\&&+S(r,g)\\&\leq& (2m_1+3m_2+2km_2+1)T(r,g)+S(r,g)\\&=&(2\Gamma_{0}+2km_2+1-m_2)T(r,g)+S(r,g),\eeas which is a contradiction since $m>2\Gamma_{0}+2km_2+1$. If $B\neq -1$, then from (\ref{e4.20}), we have $\mathcal{F}=\displaystyle\frac{(B+1)\mathcal{G}-(B-A+1)}{B\mathcal{G}+(A-B)}$ and therefore, $\ol N\left(r,\displaystyle\frac{A-B}{B};\mathcal{G}\right)=\ol N(r,\infty;\mathcal{F})=S(r,f)$. Therefore, in a similar manner as done in the case $B=-1$, we arrive at a contradiction. \par 
	\noindent \textbf{ Subcase 2.2:} Let $B\neq 0$ and $A=B$. If $B\neq-1$, then from (\ref{e4.20}), we have $\displaystyle\frac{1}{\mathcal{F}}=\displaystyle\frac{B\mathcal{G}}{(B+1)\mathcal{G}-1}$, and therefore, $\ol N(r,0;\mathcal{G})=\ol N(r,\infty;\mathcal{F})=S(r,f)$ and $\ol N\left(r,\displaystyle\frac{1}{B+1};\mathcal{G}\right)=\ol N(r,0;\mathcal{F})$. Therefore, using Lemma \ref{lem3.10a} and the second fundamental theorem of Nevalinna, we get \beas && mT(r,g)\leq T(r,P(g)L(z,g))-N(r,0;L(z,g))+S(r,g)\\&\leq& T(r,\mathcal{G})+N_{k+2}(r,0;P(g)L(z,g))-N_2(r,0;\mathcal{G})-N(r,0;L(z,g))+S(r,g)\\&\leq& \ol N(r,0;\mathcal{G})+\ol N\left(r,\frac{1}{B+1};\mathcal{G}\right)+N_{k+2}(r,0;P(g)L(z,g))-N_2(r,0;\mathcal{G})\\&&-N(r,0;L(z,g))+S(r,g)\\&\leq& \ol N(r,0;\mathcal{F})+N_{k+2}(r,0;P(g)L(z,g))-N_2(r,0;\mathcal{G})-N(r,0;L(z,g))+S(r,g)\\&\leq& N_{k+1}(r,0;P(f)L(z,f))+N_{k+2}(r,0;P(g))+N(r,0;L(z,g))-N(r,0;L(z,g))\\&&+S(r,g)\\&\leq& (m_1+2m_2+km_2)T(r,g)+(m_1+m_2+km_2+1)T(r,f)+S(r,f)+S(r,g).\eeas
	Similarly, \beas && mT(r,f)\\&\leq& (m_1+2m_2+km_2)T(r,f)+(m_1+m_2+km_2+1)T(r,g)+S(r,f)+S(r,g).\eeas Combining the above two inequalities, we obtain \beas && m(T(r,f)+T(r,g))\\&\leq& (2\Gamma_{0}+2km_2+1-m_2)(T(r,f)+T(r,g))+S(r,f)+S(r,g),\eeas which is a contradiction since $m>2\Gamma_{0}+2km_2+1$.\par If $B=-1$, then (\ref{e4.20}) reduces to $\mathcal{F}\mathcal{G}\equiv1$. This implies \bea \label{e4.21} (P(f)L(z,f))^{(k)}(P(g)L(z,g))^{(k)}\equiv 1.\eea Suppose that $P(z)=0$ has $t$ roots $\alpha_1, \alpha_2,\ldots,\alpha_t$ with multiplicities $u_1,u_2,\ldots,u_t$, respectively. Then we must have $u_1+u_2+\ldots+u_t=m$. Therefore (\ref{e4.21}) can be rewritten as \bea\label{e4.22} && (a_m(f-\alpha_1)^{u_1})\ldots(f-\alpha_t)^{u_t}L(z,f))^{(k)}(a_m(g-\alpha_1)^{u_1})\ldots(g-\alpha_t)^{u_t}L(z,g))^{(k)}\nonumber\\ &&\equiv  1.\eea Since $f$ and $g$ are entire functions, from (\ref{e4.22}), we can say that $\alpha_1,\alpha_2,\ldots,\alpha_t$ are Picard exceptional values of $f$ and $g$. Since by Picards theorem, an entire function can have atmost one finite exceptional value, all $\alpha_j$'s are equal for $1\leq j\leq t$. Let $P(z)=a_m(z-\alpha)^m$. therefore, (\ref{e4.22}) reduces to \bea\label{e4.23} (a_m(f-\alpha)^mL(z,f))^{(k)}(a_m(g-\alpha)^mL(z,g))^{(k)}\equiv1.\eea Equation (\ref{e4.23}) shows that $\alpha$ is an exceptional value of $f$ and $g$. Since $f$ is an entire function of zero order having an exceptional value $\alpha$, $f$ must be constant, which is not possible since $f$ is assumed to be transcendental and therefore non constant.\par
	\noindent \textbf{ Subcase 2.3:} Let $B=0$. Then (\ref{e4.20}) reduces to $\mathcal{F}=\displaystyle\frac{\mathcal{G}+A-1}{A}$. If $A\neq 1$, then $\ol N(r,1-A;\mathcal{G})=\ol N(r,0;\mathcal{F})$. Proceeding ina similar manner as done in\textbf{ subcase 2.2}, we get a contradiction. Hence $A=1$. Therefore, $\mathcal{F}\equiv \mathcal{G}$. This implies that \bea \label{e4.24} (P(f)L(z,f))^{(k)}\equiv (P(g)L(z,g))^{(k)}.\eea Integrating (\ref{e4.24}) $k$ times, we get \bea \label{e4.25} P(f)L(z,f)=P(g)L(z.g)+p_1(z),\eea where $p_1(z)$ is a polynomial in $z$ of degree atmost $k-1$. Suppose $p_1(z)\not\equiv 0$. Then (\ref{e4.25}) can be written as \bea \label{e4.26} \frac{P(f)L(z,f)}{p_1(z)}=\frac{P(g)L(z,g)}{p_1(z)}+1.\eea Now in view of Lemmas \ref{lem3.1}, \ref{lem3.10a} and the second fundamental theorem, we have 
	\beas mT(r,f)&\leq& T(r,P(f)L(z,f))-N(r,0;L(z,f))+S(r,f)\\&\leq& T\left(r,\frac{P(f)L(z,f)}{p_1(z)}\right)-N(r,0;L(z,f))+S(r,f)\\&\leq& \ol N\left(r,0;\frac{P(f)L(z,f)}{p_1(z)}\right)+\ol N\left(r,\infty;\frac{P(f)L(z,f)}{p_1(z)}\right)\\&&+\ol N\left(r,1;\frac{P(f)L(z,f)}{p_1(z)}\right)-N(r,0;L(z,f))+S(r,f)\\&\leq& \ol N\left(r,0;\frac{P(f)L(z,f)}{p_1(z)}\right)+\ol N\left(r,\infty;\frac{P(f)L(z,f)}{p_1(z)}\right)\\&&+\ol N\left(r,0;\frac{P(g)L(z,g)}{p_1(z)}\right)-N(r,0;L(z,f))+S(r,f)\\&\leq& \ol N(r,0;P(f))+\ol N(r,0;P(g))+\ol N(r,0;L(z,g))+S(r,f)+S(r,g)\\&\leq& (m_1+m_2)(T(r,f)+T(r,g))+T(r,L(z,g))+S(r,f)+S(r,g)\\&\leq& (m_1+m_2)(T(r,f)+T(r,g))+m\left(r,\frac{L(z,g)}{g}\right)+m(r,g)+S(r,f)\\&&+S(r,g)\\&\leq& (m_1+m_2)(T(r,f)+T(r,g))+T(r,g)+S(r,f)+S(r,g).\eeas Similarly we have \beas mT(r,g)\leq (m_1+m_2)(T(r,f)+T(r,g))+T(r,f)+S(r,f)+S(r,g).\eeas
	Combining the last two inequalities, we obtain
	\beas m(T(r,f)+T(r,g))\leq (2m_1+2m_2+1)(T(r,f)+T(r,g))+S(r,f)+S(r,g),\eeas which is a contradiction since $m>2\Gamma_{0}+2km_2+1$. The same arguments also hold for the case $m>(\Gamma_{1}+4\Gamma_{0}+5km_2+3)/2$ and $m>3\Gamma_{1}+2\Gamma_{0}+5km_2+4$. Hence $p_1(z)\equiv 0$ and therefore from (\ref{e4.25}), we have \bea\label{e4.27} P(f)L(z,f)=P(g)L(z,g).\eea Set $h=f/g$. If $h$ is non-constant, from (\ref{e4.27}), we can get that $f$ and $g$ satisfy the	algebraic equation $\mathcal{A}(f, g)= 0$, where $\mathcal{A}(w_1,w_2)=P(w_1)L(z,w_1)-P(w_2)L(z,w_2).$ If $h$ is a constant, substituting $f=gh$ into (\ref{e4.27}), we can get \beas [a_mg^m(h^{m+1}-1)+a_{m-1}g^{m-1}(h^{m}-1)+\ldots+a_0(h-1)]L(z,f)=0.\eeas Then in a similar argument as done in Case 2 in the proof of Theorem 11 in \cite{Xu & Liu & Cao & 2015}, we	obtain $f=tg$ for a constant $t$ such that $t^d=1$; $d=\textit{GCD}(\lambda_0; \lambda_1;\ldots; \lambda_m)$.
\end{proof}
\textbf{Acknowledgments}\\
There is no financial support from any agencies for this work.


\begin{thebibliography}{99}
	\bibitem{Alzahare & Yi & 2004} Alzahary, T.C., Yi, H.X.: Weighted value sharing and a question of I. Lahiri. Complex Var. Theory Appl. \textbf{49}(15), 1063--1078 (2004)
	
	\bibitem{Banerjee & 2005} Banerjee, A.: Meromorphic functions sharing one value. Int. J.Math.Math. Sci. \textbf{22}, 3587--3598 (2005) 
	
	\bibitem{Barnett & Halburd & 2007} Barnett, D.C., Halburd R. G., Korhonen R.J., Morgan W.: Nevanlinna theory for the q-difference operator and meromorphic solutions of q-difference equations. Proc. Roy. Soc. Edinburgh Sect. A \textbf{137}(3), 457--474 (2007) 
	
	\bibitem{Chiang & Feng & 2008} Chiang, Y.M., Feng, S.J.: On the Nevanlinna characteristic of $f(z+\eta)$	and difference equations in the complex plane. Ramanujan J. \textbf{16}, 105--129 (2008)
	\bibitem{Halburd & Korhonen & 2006} Halburd, R.G., Korhonen, R.J.: Difference analogue of the lemma on the logarithmic derivative with applications to difference equations. J. Math. Anal. Appl. \textbf{314}, 477-487 (2006)
	
	\bibitem{Halburd & Korhonen & 2006 & Fenn}  Halburd, R.G., Korhonen, R.J.: Nevanlinna theory for the difference operator. Ann. Acad. Sci. Fenn. Math. \textbf{31}, 463--478 (2006)
	
	\bibitem{Hayman & 1964} Hayman, W.K.: Meromorphic Functions. The Clarendon Press, Oxford (1964)
	
	\bibitem{Heittokangas et al & 2011} Heittokangas, J., Korhonen, R.J., Laine, I., Rieppo, J.: Uniqueness of meromorphic functions sharing values with their shifts, Complex Var. Elliptic Equ. \textbf{56}, 81--92 (2011)
	\bibitem{Lahiri & Nagoya & 2001} Lahiri, I.: Weighted sharing and uniqueness of meromorphic functions. Nagoya Math. J. \textbf{161}, 193--206 (2001)
	
	\bibitem{Lahiri & Complex Var & 2001} Lahiri, I.: Weighted value sharing and uniqueness of meromorphic functions. Complex Var. Theory Appl. \textbf{46}, 241--253 (2001)
	
	\bibitem{Lahiri & Banerjee & 2006} Lahiri, I., Banerjee ,A.: Weighted sharing of two sets, Kyungpook Math. J. \textbf{46}, 79--87 (2006)
	
	\bibitem{Lahiri & Sarkar & 2004} Lahiri, I., Sarkar, A.: Uniqueness of meromorphic function and its derivative. J. Inequal. Pure Appl. Math. \textbf{5 }(1), Art.20 (2004). http://jipam.vu.edu.au/
	
	\bibitem{7a} Lahiri, I., Dewan, S.: Value distribution of the product of a meromorphic function and its derivative. Kodai Math. J. \textbf{26}, 95--100 (2003)
	
	\bibitem{Laine & Yang & 2007}Laine, I., Yang, C.: Value distribution of difference polynomials. Proc. Japan Acad. Ser. A \textbf{83}, 148--151 (2007)
	
	\bibitem{Liu & 2009} Liu, K.: Meromorphic functions sharing a set with applications to difference equations. J. Math. Anal. Appl. \textbf{359}, 384--393 (2009)
	
	\bibitem{Liu & Yang & 2009} Liu, K., Yang, L.Z.: Value distribution of the difference operator. Arch. Math. \textbf{92}, 270--278 (2009)
	
	\bibitem{Liu & Liu & Cao & 2011}  Liu, K., Liu, X.L., Cao, T.B.: Uniqueness and zeros of $q$-shift difference polynomials. Proc. Indian Acad. Sci. \textbf{121} (3), 301--310 (2011)
	
	\bibitem{Liu & Qi & 2011} Liu, K., Qi, X.G.: Meromorphic solutions of q-shift difference equations. Ann. Polon.	Math. \textbf{101}, 215--225 (2011)
	
	\bibitem{9} Mohon'ko, A.Z.: On the Nevanlinna characteristics of some meromorphic functions. Theory of Funct. Func. Anal. Appl. \textbf{14}, 83--87 (1971)
	
	%\bibitem{Wang & Xu & Zhang & 2014} Wang, L.N., Xu, H.Y., Zhang, T.S.: Properties of q-shift difference-differential polynomials of meromorphic functions. Adv. Difference Equ. \textbf{2014}, Art. 249 (2014). doi:10.1186/1687-1847-2014-249
	
	\bibitem{Xu & Liu & Cao & 2015} Xu, H.Y., Liu, K., Cao, T.B.: Uniqueness and value distribution for q-shifts of	meromorphic functions. Math. Commun. \textbf{20}, 97--112 (2015)
	
	\bibitem{Yamanoi & 2002} Yamanoi, K.: The second main theorem for small functions and related problems. Acta Math. \textbf{192}, 225--294 (2004)
	
	\bibitem{Yang & 1993} Yang, L.: Value Distribution Theory. Springer and Science Press, Berlin (1993)
	%\bibitem{Liu & Qi & 2011} K. Liu, X. G. Qi, Meromorphic solutions of $q$-shift difference equations, Ann. Polon. Math. \textbf{101}(2011), 215--225.
	%\bibitem{Xu & Liu & Cao & 2015} {\sc H. Y. Xu,   K. Liu, T. B. Cao}, {\em Uniqueness and value distribution for q-shifts of	meromorphic functions}, Math. Commun., \textbf{20} (2015), 97--112.
	
	
	\bibitem{Yi & 1999} Yi, H.X.: Meromorphic functions that share one or two values II. Kodai Math. J. \textbf{22}, 264--272 (1999)
	
	%\bibitem{Banerjee & Czech& 2007} {\em A. Banerjee:} Meromorphic functions sharing two sets. Czech. Math. J. {\bf{57}}\textbf{ (132)} (2007), 1199--1214.
	%\bibitem{Banerjee & 2010}{\em A. Banerjee:} Uniqueness of meromorphic functions sharing two sets with finite weight II. Tamkang J. Math., \textbf{41 (4)} (2010), 379--392.
	
	%\bibitem{Chen & Huang & Zheng & 2011} {\em Z. X. Chen, Z. B. Huang, X. M. Zheng:} On properties of difference polynomials. Acta Math. Sci. Ser. B Engl. Ed., \textbf{31} (2011), 627--633.
	
	
	
	
	%\bibitem{Liu & Liu & Cao & 2011}{\em K. Liu, X. L. Liu, T. B. Cao:} Value distributions and uniqueness of difference polynomials. Adv. Differential Equations 2011(2011), Art. ID 234215, pp.12.
	
	
	%\bibitem{Luo & Lin &  2011}{\em X. D. Luo, W. C. Lin:} Value sharing results for shifts of meromorphic functions. J. Math. Anal. Appl., \textbf{377} (2011), 441--449.
	
	
	%\bibitem{Qi & Liu & Yang & 2011}{\em X. Qi, K. Liu, L. Yang:} Value sharing results of a meromorphic	function $f(z)$ and $f(qz)$. Bull. Korean Math. Soc., \textbf{48} (2011), 1235--1243.
	
	
	
	%\bibitem{Yi & 1997}{\em H. X. Yi:} Meromorphic functions that share three sets. Kodai Math. J., \textbf{20} (1997), 22--32.
	
	%\bibitem{Zhang & 2010}{\em J. Zhang:} Value distribution and shared sets of differences of meromorphic functions. J. Math. Anal. Appl., \textbf{367 (2)} (2010), 401--408.
	\bibitem{Zhang & Korhonen & 2010} Zhang, J.L., Korhonen, R.: On the Nevanlinna characteristic of f(qz) and its applications. J. Math. Anal. Appl. \textbf{369}(2), 537--544 (2010)
	
	\bibitem{Zhang & 2005} Zhang, Q.C.: Meromorphic function that shares one small function with its derivative. J. Inequal. Pure Appl. Math. \textbf{6 }(4), Art.116 (2005). http://jipam.vu.edu.au/
	%\bibitem{Zhang & 2010}{\em J. L. Zhang:} Value distribution and shared sets of differences of meromorphic functions. J. Math. Anal. Appl., \textbf{367} (2010), 401--408.
	\bibitem{Zhang & Korhonen & 2010} Zhang, J.L., Korhonen, R.J.: On the Nevanlinna characteristic of f(qz) and its applications. J. Math. Anal. Appl. \textbf{369}, 537--544 (2010)
\end{thebibliography}
\end{document}